   \def\MR#1{}
\newcommand*\circled[1]{\tikz[baseline=(char.base)]{
            \node[shape=circle,draw,inner sep=1pt] (char) {#1};}}
\definecolor{seagreen}{RGB}{46,139,87}
\definecolor{maroon}{RGB}{128,0,0}
\definecolor{darkviolet}{RGB}{148,0,211}
\definecolor{twelve}{RGB}{100,100,170}
\definecolor{thirteen}{RGB}{100,150,50}
\definecolor{fourteen}{RGB}{200,0,0}
\definecolor{fifteen}{RGB}{0,200,0}
\definecolor{sixteen}{RGB}{0,0,200}
\definecolor{seventeen}{RGB}{200,0,200}
\definecolor{eighteen}{RGB}{0,200,200}
\newcommand{\bb}[1]{\mathbb{#1}}
\newcommand{\mbf}[1]{\mathbf{#1}}
\newcommand{\es}[1]{\EuScript{#1}}
\renewcommand{\sf}[1]{\mathsf{#1}}
\DeclareMathOperator{\colim}{\mathrm{colim}}
\DeclareMathOperator{\hocolim}{\mathrm{hocolim}}
\DeclareMathOperator{\holim}{\mathrm{holim}}
\DeclareMathOperator{\hofibre}{\mathrm{hofib}}
\newcommand{\s}{{\sf{Sp}}}
\DeclareMathOperator{\T}{\sf{Top}_\ast}
\newcommand{\poly}[1]{\mathsf{Poly}^{\leq #1}}
\newcommand{\homog}[1]{\mathsf{Homog}^{#1}}
\newcommand{\Aut}{\mathrm{Aut}}
\newcommand{\Fun}{\sf{Fun}}
\DeclareMathOperator{\Map}{\mathsf{Map}}
\newcommand{\cofrep}{\sf{Q}}
\newcommand{\fibrep}{\sf{R}}
\DeclareMathOperator{\id}{\mathrm{Id}}
\DeclareMathOperator{\ind}{\mathsf{ind}}
\DeclareMathOperator{\res}{\mathsf{res}}
\newcommand{\R}{\mathbb{R}}
\newcommand{\C}{\mathbb{C}}
  \newcommand{\adjunction}[4]{
\xymatrix{
#1:#2 \ar@<.5ex>[r] &
\ar@<.5ex>[l] #3:#4
}}
\newtheorem{thm}{Theorem}[subsection]
\newtheorem{prop}[thm]{Proposition}
\newtheorem{lem}[thm]{Lemma}
\newtheorem{cor}[thm]{Corollary}
\newtheorem*{thm*}{Theorem}
\theoremstyle{definition}
\newtheorem{definition}[thm]{Definition}
\newtheorem{rem}[thm]{Remark}
\begin{document}


\title{Comparing orthogonal calculus and calculus with Reality}

\author{Niall Taggart}
\address{Mathematical Institute, Utrecht University, Budapestlaan 6, 3584 CD Utrecht, The Netherlands}
\email{n.c.taggart@uu.nl}

\date{\today}



\begin{abstract}
We show that there exists a suitable $C_2$-fixed points functor from calculus with Reality to the orthogonal calculus of Weiss which recovers orthogonal calculus ``up to a shift'' in an analogous way with the recovery of real topological $K$-theory from Atiyah's $K$-theory with Reality via  appropriate $C_2$-fixed points.
\end{abstract}
\maketitle

\setcounter{tocdepth}{1}
{\hypersetup{linkcolor=black} \tableofcontents}

\section{Introduction}

\subsection*{Background} Orthogonal calculus~\cite{We95} is a homotopy theoretic tool developed to study geometric topology. Weiss' key observation, stemming from work of Weiss and Williams~\cite{WW88}, is that many objects of interest to geometric topologists may be packaged as functors from the category of Euclidean spaces and linear isometries to the category of (based) spaces. Key examples include the functor which assigns to a Euclidean space $V$ the classifying space $\sf{BDiff}(V)$ of the group of diffeomorphisms of $V$, or the functor which assigns to a Euclidean space $V$ the space of (smooth) embeddings $\sf{Emb}(M \times V, N \times V)$ for $M$ and $N$ fixed smooth manifolds, possibly with boundary conditions.  

To a functor $F$ from the category of Euclidean spaces to (based) spaces, the calculus assigns a tower of functors
\[\begin{tikzcd}
	&&& F \\
	\cdots & {T_{n}F} & {T_{n-1}F} & \cdots & {T_1F} & {T_0F}
	\arrow[from=2-2, to=2-3]
	\arrow[from=2-3, to=2-4]
	\arrow[from=2-1, to=2-2]
	\arrow[from=2-4, to=2-5]
	\arrow[from=2-5, to=2-6]
	\arrow[curve={height=-6pt}, from=1-4, to=2-6]
	\arrow[curve={height=-6pt}, from=1-4, to=2-5]
	\arrow[curve={height=6pt}, from=1-4, to=2-3]
	\arrow[curve={height=6pt}, from=1-4, to=2-2]
\end{tikzcd}\]
under $F$, which we refer to as \emph{the Weiss tower}. The $n$-th layer of the Weiss tower is the homotopy fibre $D_nF$ of the map $T_nF \to T_{n-1}F$ induced by the inclusion $\R^n \hookrightarrow \R^{n+1}$. Weiss~\cite{We95} classified the homotopy type of the $n$-th layers of this tower in terms of spectra with an action of the $n$-th orthogonal group $O(n)$, and the orthogonal calculus program aims to understand the homotopy type of the input functor $F$ in terms of the homotopy type of the spectra classifying the layers. Informally, one may think of this as a calculus in which total Stiefel-Whitney classes appear as first-order approximations and total Pontryagin classes as second-order approximations.

Orthogonal calculus fits into a larger framework of functor calculus, a categorification of Taylor's theorem from differential topology, pioneered by Goodwillie~\cite{Go90, Go91, Go03}. The aim of any version of functor calculus is to split a functor into ``polynomial'' parts, and reconstruct the homotopy type of the functor from these polynomial pieces. Depending on the natural structure of the functor you wish to study, one may want a calculus which captures this structure. For example a version of orthogonal calculus based on complex inner product spaces was of fundamental importance to Arone's study of Mitchell's finite spectra with $\mathcal{A}_k$-free cohomology~\cite{Ar98} which arise naturally as stabilisations of Weiss' \emph{cross-effects} or \emph{derivatives}, and to Hahn and Yuan's~\cite{HY19} study of multiplicative structures in the stable splittings of $\Omega \sf{SL}_n(\C)$. 

Since the original version of functor calculus there has been a small industry focused on the development of other versions of calculus that capture different structures, and in investigating the relationships between these various versions of functor calculus. Examples include the discrete calculus of Bauer, Johnson and McCarthy~\cite{BMJ} together with its comparison to Goodwillie calculus, and the manifold calculus of Goodwillie and Weiss~\cite{We96, We99, GW99} and its relationship with orthogonal calculus as exploited by Arone, Lambrechts and Voli\'c~\cite{ALV2} in their study of the stable rational homotopy type of spaces of embeddings. 

Motivated by the analogy between orthogonal calculus and real topological $K$-theory, the author developed a series of other calculi: unitary calculus~\cite{TaggartUnitary}, which sits in analogy with complex topological $K$-theory, and calculus with Reality~\cite{TaggartReality}, which is the calculus version of Atiyah's $K$-theory with Reality. These calculi are related in precisely the same manner as their analogous $K$-theories. The complexification-realification adjunction on the level of vector spaces induces a relationship between orthogonal and unitary calculus~\cite{TaggartOCandUC} which is well-behaved if the functors have convergent Weiss towers. This is the calculus version of the existence of maps (of spectra) 
\begin{align*}
&r: KU \to KO \\ &c: KO \to KU,
\end{align*}
relating real and complex topological $K$-theory. Moreover, we have a calculus version of the statement that the underlying non-equivariant spectrum of Atiyah's $K$-theory with Reality spectrum $KR$ is the complex topological $K$-theory spectrum $KU$; forgetting the ``complex conjugation'' action on calculus with Reality recovers unitary calculus~\cite{TaggartRealityUnitary}. 

\subsection*{Statement of main results} Figure \ref{figure 2} depicts the known relations between the various calculi.
\begin{figure}[ht]
{
\makebox[\textwidth][c]{
\xymatrix{
&
*+[F-:<3pt>]{\begin{array}{c}
\text{Calculus with} \\
\text{Reality} \end{array}} \ar[ddr]_{{\text{\cite{TaggartRealityUnitary}}}}^{\begin{array}{c}
\text{Forget} \\
C_2 - \text{action} \\
\end{array}} \ar@{-->}[ddl]_{\begin{array}{c}
C_2-\text{fixed} \\
\text{points} \\
\end{array}} & \\ & & & \\
*+[F-:<3pt>]{\begin{array}{c}
\text{Orthogonal calculus} \\ \end{array}} \ar@{<->}[rr]^{{\text{\cite{TaggartOCandUC}}}}_{\begin{array}{c}
\text{Complexification/} \\
\text{Realification} \\
\end{array}} & &
*+[F-:<3pt>]{\begin{array}{c}
\text{Unitary calculus} \\ \end{array}}
}
}
}
\caption{Relationship between various versions of Weiss calculus}
\label{figure 2}
\end{figure}
In this work we complete these comparisons by completing the dotted arrow in the diagram, i.e., by demonstrating the calculus version of the fact that the spectrum $KO$ of real topological $K$-theory is the (homotopy) $C_2$-fixed points of $KR$. We construct an appropriate fixed points functor from calculus with Reality to orthogonal calculus and show that this functor interacts well with the Weiss towers. 

The major technical difficulty to overcome is the non-existence of a good fixed points functor. Calculus with Reality is constructed with respect to the underlying non-equivariant equivalences of $C_2$-spaces, which are preserved by homotopy $C_2$-fixed points, but \emph{not} $C_2$-fixed points, in general. On the other hand, the polynomial approximations are constructed as a (sequential) homotopy colimit of a (compact) homotopy limit. These homotopy colimits are preserved by $C_2$-fixed points but \emph{not} by homotopy $C_2$-fixed points, in general. In some sense the problem is that our model for $C_2$-spaces is based on \emph{free} $C_2$-spaces, which interact poorly with the various versions of fixed points. We choose to model $C_2$-spaces by \emph{cofree} $C_2$-spaces, on which homotopy $C_2$-fixed points and $C_2$-fixed points agree, and hence we can exploit all of the nice properties of both functors. We then apply this to the equivalence between free and cofree $C_2$-spaces, see Proposition~\ref{prop: free=cofree}, to produce a cofree model for calculus with Reality.

Although we do not state it, our methods for showing that calculus with Reality based on cofree $C_2$-spaces agrees with calculus with Reality based on free $C_2$-spaces extend without much work to show that if $\es{C}$ is a topological model category on which there exists a suitable notion of Weiss calculus, and $\es{D}$ is a Quillen equivalent topological model category with left Quillen functor $L: \es{C} \to \es{D}$ which preserves all weak equivalences then there exists a suitable notion of Weiss calculus on $\es{D}$ which is equivalent to the Weiss calculus on $\es{C}$. This is similar to the analysis carried out by Walter~\cite{WalterRational} in Goodwillie calculus.

We exhibit an equivalence of homotopy theories between the polynomial functors in the free model for calculus with Reality constructed in~\cite{TaggartReality} and the homotopy theory of these functors based on cofree $C_2$-spaces. 

\begin{thm*}[Proposition~{\ref{prop: polynomial invariant under QE}}]
Let $n$ be a non-negative integer. The identity functor induces an equivalence of $\infty$-categories between the $\infty$-category of $n$-polynomial functors based on free $C_2$-spaces and the $\infty$-category of $n$-polynomial functors based on cofree $C_2$-spaces. 
\end{thm*}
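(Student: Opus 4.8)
The plan is to set up a Quillen equivalence at the level of the diagram categories defining Weiss calculus with Reality, and then to argue that this equivalence descends to the Bousfield localizations at which the $n$-polynomial functors are the fibrant objects. Write $\es{E}_0$ for the model category of input functors underlying the free model of calculus with Reality from~\cite{TaggartReality}, and $\es{E}_0^{\mathrm{cof}}$ for the analogous category built from cofree $C_2$-spaces; by Proposition~\ref{prop: free=cofree} the identity-on-underlying-objects adjunction between free and cofree $C_2$-spaces is a Quillen equivalence which, crucially, has the property that \emph{both} adjoints preserve all weak equivalences (the underlying non-equivariant ones), since a map is a weak equivalence in either model exactly when it is so on underlying spaces. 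Post-composing with this adjunction gives a Quillen equivalence $L \colon \es{E}_0 \rightleftarrows \es{E}_0^{\mathrm{cof}} \colon R$ between the corresponding functor categories, and again both $L$ and $R$ preserve all weak equivalences. The first step is to record this and to check that $L$, being essentially post-composition with a functor that commutes with the relevant (co)limits, preserves the building blocks of the polynomial approximation construction — finite homotopy limits over the relevant poset of subspaces and sequential homotopy colimits — so that $L$ commutes with the endofunctor $T_n$ up to natural weak equivalence, and likewise for $R$.

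The second step is to identify the $n$-polynomial model structures on each side as left Bousfield localizations of these functor model structures. In the free model, $T_n$-equivalences are the maps becoming equivalences after applying $T_n$, and $n$-polynomial functors are the local objects; one sets up the analogous localization on $\es{E}_0^{\mathrm{cof}}$ using the cofree version of $T_n$. Because $L$ and $R$ both preserve all weak equivalences and commute with $T_n$ up to equivalence, $L$ sends $T_n$-equivalences to $T_n$-equivalences and $R$ sends local objects to local objects; by the universal property of left Bousfield localization this upgrades $(L,R)$ to a Quillen equivalence between the two $n$-polynomial model structures. Passing to the associated $\infty$-categories (Dwyer–Kan localizations, or the underlying $\infty$-categories of these combinatorial/topological model categories) turns a Quillen equivalence into an equivalence of $\infty$-categories, and since on underlying objects $L$ is the identity, the induced functor is the one induced by the identity functor, as claimed.

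The main obstacle I anticipate is the second step's verification that $T_n$ is genuinely compatible with the Quillen equivalence — concretely, that the cofree $T_n$ agrees, under $L$ and $R$, with the free $T_n$ up to natural weak equivalence. This requires knowing that the homotopy limit defining $\tau_n$ is built from \emph{cofibrant-enough} or levelwise-nice diagrams so that $L$ (a left Quillen functor) computes it correctly, and that $R$ (a right Quillen functor) computes the sequential homotopy colimit correctly; the saving grace is exactly the hypothesis that both functors preserve \emph{all} weak equivalences, not merely those between (co)fibrant objects, so the usual cofibrancy bookkeeping collapses. A secondary point to be careful about is that left Bousfield localizations of topological model categories with the relevant left properness exist and that the localization is again topological, so that the comparison can be carried out $\infty$-categorically; this is routine given the framework already in place in~\cite{TaggartReality}, and is the reason the remark preceding the statement asserts the argument works in the generality of any Quillen-equivalent topological model category with a weak-equivalence-preserving left Quillen functor.
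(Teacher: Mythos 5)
Your argument is correct but takes a genuinely different route from the paper's. The paper identifies the $n$-polynomial model structures on both sides as left Bousfield localizations at an explicit \emph{set} of maps (the sphere-bundle projections $\underset{0\neq U\subset\mathds{k}^{n+1}}{\hocolim}\,\es{J}(V\oplus U,-)\to\es{J}(V,-)$), notes that these representables are projectively cofibrant so that the cofree localizing set is literally the derived image of the free one under the identity adjunction, and then invokes Hirschhorn's Theorem 3.3.20 to conclude formally that the Quillen equivalence of projective model structures descends to a Quillen equivalence of the localizations. You instead use the Bousfield--Friedlander description of the $n$-polynomial model structure via the endofunctor $T_n$ and verify by hand that $T_n$-equivalences and $T_n$-local objects are intertwined by the identity adjunction. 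Both are legitimate; the paper's is shorter and essentially a one-line citation once the localizing sets are spelled out, while yours makes explicit which structural facts actually drive the result, namely that the identity is homotopical in both directions and commutes with $T_n$ up to weak equivalence. The one place your argument needs care is the claim that $T_n$ is compatible with the adjunction: the point-set functor $\tau_n$ is identical on both sides, but a priori the cofree derived $T_n$ requires fibrant replacement in the cofree model. You correctly flag that the coincidence of weak equivalences lets you dispense with this bookkeeping; it would be worth spelling out that $T_n$ preserves underlying weak equivalences (a standard fact in Weiss calculus), so the point-set $T_n$ already computes the derived $T_n$ in both model structures and the needed square commutes up to levelwise weak equivalence.
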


We perform a similar analysis for homogeneous functors in the free model for calculus with Reality constructed in~\cite{TaggartReality} and the homotopy theory of these functors based on cofree $C_2$-spaces. 

\begin{thm*}[Proposition~{\ref{prop: homogeneous invariant under QE}}] Let $n$ be a non-negative integer. The identity functor induces an equivalence of $\infty$-categories between the $\infty$-category of $n$-homogeneous functors based on free $C_2$-spaces and the $\infty$-category of $n$-homogeneous functors based on cofree $C_2$-spaces. 
\end{thm*}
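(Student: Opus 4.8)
The plan is to deduce this from Proposition~\ref{prop: polynomial invariant under QE} by realising $n$-homogeneous functors as a full sub-$\infty$-category of $n$-polynomial functors cut out by a condition preserved by the equivalence of that proposition. Recall that a functor $F$ is $n$-homogeneous exactly when it is $n$-polynomial and $(n-1)$-reduced, i.e.\ $n$-polynomial with $T_{n-1}F\simeq\ast$. Thus, in either the free or the cofree model, the $\infty$-category of $n$-homogeneous functors is the full sub-$\infty$-category of $\poly{n}$ spanned by those $F$ with $T_{n-1}F\simeq\ast$; since this condition is invariant under equivalence of objects it does cut out a genuine full subcategory. (For $n=0$ the statement is literally the $n=0$ case of Proposition~\ref{prop: polynomial invariant under QE}, every functor being $(-1)$-reduced, so one may assume $n\geq 1$.) Writing $\Phi_k\colon \poly{k}_{\mathrm{free}}\xrightarrow{\sim}\poly{k}_{\mathrm{cofree}}$ for the equivalence of Proposition~\ref{prop: polynomial invariant under QE}, it then suffices to show that both $\Phi_n$ and $\Phi_n^{-1}$ carry $(n-1)$-reduced functors to $(n-1)$-reduced functors.

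The key point is that $T_{n-1}$ is intertwined with the equivalences $\Phi_\bullet$, i.e.\ $\Phi_{n-1}\circ T_{n-1}\simeq T_{n-1}\circ\Phi_n$ as functors $\poly{n}_{\mathrm{free}}\to\poly{n-1}_{\mathrm{cofree}}$ (equivalently $T_{n-1}\circ\Phi_n^{-1}\simeq\Phi_{n-1}^{-1}\circ T_{n-1}$). An $(n-1)$-polynomial functor is in particular $n$-polynomial, so $\poly{n-1}$ is a reflective sub-$\infty$-category of $\poly{n}$ with reflection $T_{n-1}$; as the equivalences $\Phi_\bullet$ are all induced by the identity functor, the inclusion $\poly{n-1}\hookrightarrow\poly{n}$ commutes strictly with them, whence so do the reflections up to canonical equivalence by uniqueness of left adjoints. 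Alternatively one argues directly from Weiss' construction of $T_{n-1}$ as a sequential homotopy colimit of homotopy limits: these homotopy limits and colimits are preserved by the Quillen equivalence relating the free and cofree models --- which is essentially the content of the proof of Proposition~\ref{prop: polynomial invariant under QE} --- and it is exactly at the homotopy limit that the cofree model, on which $C_2$-fixed points and homotopy $C_2$-fixed points agree, is needed.

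Granting the intertwining the argument concludes quickly. Being an equivalence, $\Phi_{n-1}$ preserves the terminal object $\ast$, so if $F\in\poly{n}_{\mathrm{free}}$ satisfies $T_{n-1}F\simeq\ast$ then $T_{n-1}\Phi_n(F)\simeq\Phi_{n-1}(T_{n-1}F)\simeq\Phi_{n-1}(\ast)\simeq\ast$, i.e.\ $\Phi_n(F)$ is $(n-1)$-reduced; running the same computation with $\Phi_n^{-1}$ and $\Phi_{n-1}^{-1}$ shows $\Phi_n^{-1}$ preserves $(n-1)$-reducedness too. Hence $\Phi_n$ restricts to a fully faithful and essentially surjective functor between the full sub-$\infty$-categories of $n$-homogeneous functors, giving the asserted equivalence.

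The main obstacle is the intertwining of $T_{n-1}$ with the Quillen equivalence --- making precise that the $(n-1)$-polynomial reflection cannot tell the free and cofree models apart. On the $\infty$-categorical level this is formal once Proposition~\ref{prop: polynomial invariant under QE} is in hand, provided one is careful that $\poly{n-1}\hookrightarrow\poly{n}$ really is the reflective inclusion in both models; a more hands-on proof unwinds the homotopy limits and colimits defining $T_{n-1}$, and this is exactly where the favourable properties of the cofree model are genuinely invoked. A heavier alternative sidesteps this by transporting the classification of $n$-homogeneous functors in terms of ($C_2$-)spectra with a twisted $O(n)$-action from~\cite{TaggartReality} along the Quillen equivalence, but the full-subcategory argument above is cleaner.
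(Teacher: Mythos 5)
Your proof is correct, but it takes a genuinely different route from the paper. The paper works at the model category level: it observes that the $n$-homogeneous model structure is a right Bousfield localization (colocalization) of the $n$-polynomial model structure, invokes Hirschhorn's criterion~\cite[Theorem~3.3.20(2)]{Hi03}, and reduces the claim to checking that the $D_n$-equivalences in the free and cofree projective model structures coincide; one direction is immediate since the identity preserves underlying weak equivalences, and the other requires a short fibrant-replacement argument (showing that a projective fibrant replacement in the cofree model is also one in the free model). You instead pass directly to underlying $\infty$-categories, identify $\homog{n}_\infty$ as the full sub-$\infty$-category of $\poly{n}_\infty$ spanned by the $(n-1)$-reduced objects, and show the equivalence $\Phi_n$ of Proposition~\ref{prop: polynomial invariant under QE} preserves $(n-1)$-reducedness by intertwining $T_{n-1}$ with $\Phi_\bullet$ through the formal passage to left adjoints of the strictly commuting square of reflective inclusions $\poly{n-1}\hookrightarrow\poly{n}$. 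Both arguments ultimately rest on Proposition~\ref{prop: polynomial invariant under QE}; yours is more conceptual and bypasses the colocal-equivalence bookkeeping, at the (small) cost of the check that the square of inclusions passes cleanly to its mate, while the paper's is closer to the point-set machinery and keeps all work within Hirschhorn's framework. The step where you should be a touch more careful is the claim that the underlying $\infty$-category of a right Bousfield localization is exactly the full subcategory of colocal objects in the unlocalized $\infty$-category --- this is true, but in the present setting one should check that the bifibrant objects of $\homog{n}$ (projectively cofibrant, $(n-1)$-reduced, $n$-polynomial) really do span the same full subcategory as the $(n-1)$-reduced bifibrant objects of $\poly{n}$; this is the content of Lemma~\ref{lem: homogeneous model structure exists} and is fine, but is worth flagging rather than leaving implicit.
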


We further show that calculus with Reality may be modelled on cofree $C_2$-spaces by demonstrating that the equivalence of $\infty$-categories between the $\infty$-category of $n$-homogeneous functors and the $\infty$-category of spectra with an action of $C_2\ltimes U(n)$ coming from the classification of $n$-homogeneous functors naturally lifts to a suitable equivalence between the cofree models. The following result is a combination of the results in $\S$\ref{subsection: cofree differentiation}.

\begin{thm*}
There is a commutative diagram 
\[\begin{tikzcd}
	{\homog{n}(\es{J}_0^\mbf{R},C_2\T^\sf{free})} & {\homog{n}(\es{J}_0^\mbf{R},C_2\T^\sf{cofree})} \\
	{\Fun_{C_2 \ltimes U(n)}(\es{J}_n^\mbf{R}, (C_2\ltimes U(n)\T^\sf{free})} & {\Fun_{C_2 \ltimes U(n)}(\es{J}_n^\mbf{R}, (C_2\ltimes U(n)\T^\sf{cofree})} \\
	{\s^\mbf{R}[U(n)]^\sf{free}} & {\s^\mbf{R}[U(n)]^\sf{cofree}} \\
	{\s^\mbf{O}[C_2 \ltimes U(n)]^\sf{free}} & {\s^\mbf{O}[C_2 \ltimes U(n)]^\sf{cofree}}
	\arrow["{(\alpha_n^\mbf{R})^\ast}"', shift right=1, from=3-1, to=2-1]
	\arrow["{(\alpha_n^\mbf{R})_!}"', shift right=1, from=2-1, to=3-1]
	\arrow["\psi", shift left=2, from=3-1, to=4-1]
	\arrow["{L_\psi}", from=4-1, to=3-1]
	\arrow["{\res_0^n/U(n)}", shift left=1, from=2-1, to=1-1]
	\arrow["{\ind_0^n\varepsilon^\ast}", shift left=1, from=1-1, to=2-1]
	\arrow["{\mathds{1}}", shift left=1, from=1-1, to=1-2]
	\arrow["{\mathds{1}}", shift left=1, from=1-2, to=1-1]
	\arrow["{\mathds{1}}", shift left=1, from=2-1, to=2-2]
	\arrow["{\mathds{1}}", shift left=1, from=2-2, to=2-1]
	\arrow["{\mathds{1}}", shift left=1, from=3-1, to=3-2]
	\arrow["{\mathds{1}}", shift left=1, from=3-2, to=3-1]
	\arrow["{\mathds{1}}", shift left=1, from=4-1, to=4-2]
	\arrow["{\mathds{1}}", shift left=1, from=4-2, to=4-1]
	\arrow["{L_{\psi}}", shift left=1, from=4-2, to=3-2]
	\arrow["\psi", shift left=1, from=3-2, to=4-2]
	\arrow["{(\alpha_n^\mbf{R})_!}"', shift right=1, from=2-2, to=3-2]
	\arrow["{(\alpha_n^\mbf{R})^\ast}"', shift right=1, from=3-2, to=2-2]
	\arrow["{ind_0^n\varepsilon^\ast}", shift left=1, from=1-2, to=2-2]
	\arrow["{\res_0^n/U(n)}", shift left=1, from=2-2, to=1-2]
\end{tikzcd}\]
of adjoint pairs in which every adjoint pair induces an equivalence of $\infty$-categories. 
\end{thm*}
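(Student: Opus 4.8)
The plan is to assemble the diagram from pieces that are already available and to fill in the right-hand column last. Note first that the diagram decomposes into three horizontal slabs, one between each pair of consecutive rows, and that in each slab the left-hand vertical adjunction is part of the classification of $n$-homogeneous functors in the free model for calculus with Reality from \cite{TaggartReality}: $(\res_0^n/U(n),\ \ind_0^n\varepsilon^\ast)$ is the differentiation equivalence, $((\alpha_n^\mbf{R})^\ast,\ (\alpha_n^\mbf{R})_!)$ is induced by the equivalence $\alpha_n^\mbf{R}$ of the $n$-th intermediate category with the $n$-th derivative category, and $(\psi,\ L_\psi)$ identifies $\s^\mbf{R}[U(n)]$ with $\s^\mbf{O}[C_2\ltimes U(n)]$. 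So the whole left column already consists of Quillen, hence $\infty$-categorical, equivalences, and nothing new is needed there.

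Next I would show that each horizontal adjoint pair, all given by the identity functor, is an equivalence. The top one is exactly Proposition~\ref{prop: homogeneous invariant under QE}. For the lower three it suffices to know that the identity functor $C_2\T^\sf{free}\to C_2\T^\sf{cofree}$ is a Quillen equivalence preserving all weak equivalences (Proposition~\ref{prop: free=cofree}), together with the observation already exploited for Propositions~\ref{prop: polynomial invariant under QE} and~\ref{prop: homogeneous invariant under QE} that the property ``the identity is a Quillen equivalence preserving all weak equivalences'' is inherited by the diagram categories $\Fun_{C_2\ltimes U(n)}(\es{J}_n^\mbf{R},-)$ and by the categories of spectrum objects $\s^\mbf{R}[U(n)](-)$ and $\s^\mbf{O}[C_2\ltimes U(n)](-)$, since these are built from $C_2\T$ by forming diagram categories, spectra, and left Bousfield localizations, each of which preserves the property. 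Hence every horizontal adjoint pair descends to an equivalence of $\infty$-categories.

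It then remains to check that each of the three squares commutes up to natural equivalence and to deduce the right-hand column. Commutativity is essentially formal: in each square the two vertical functors are defined by literally the same formulas --- restriction along $\es{J}_n^\mbf{R}\to\es{J}_0^\mbf{R}$ followed by passage to $U(n)$-orbits, restriction and left Kan extension along $\alpha_n^\mbf{R}$, and the functors $\psi$, $L_\psi$ --- and only the ambient model structures differ, so the squares commute on the nose at the point-set level. Because the free and cofree model structures share the same cofibrations, the cofibrant objects used to left-derive $\ind_0^n\varepsilon^\ast$, $(\alpha_n^\mbf{R})_!$ and $L_\psi$ agree in the two models, while the right adjoints $\res_0^n/U(n)$, $(\alpha_n^\mbf{R})^\ast$ and $\psi$ preserve all weak equivalences; hence the squares continue to commute after passing to derived functors and to the underlying $\infty$-categories. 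Finally, in each square the left vertical and both horizontal arrows are equivalences, so the right vertical arrow is one as well by the two-out-of-three property, and the composite right-hand column realizes the classification of $n$-homogeneous functors in the cofree model.

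The step I expect to be the main obstacle is the middle paragraph: verifying that passing from $C_2$-spaces to the categories of $\es{J}_n^\mbf{R}$-diagrams and of $\mbf{R}$- and $\mbf{O}$-spectra carrying a $C_2\ltimes U(n)$-action really does preserve the statement that the identity is a Quillen equivalence preserving all weak equivalences --- in particular that the required model structures on the cofree side exist and that the left Bousfield localizations encoding polynomiality and stability are compatible across the two models. Once this model-categorical bookkeeping from \S\ref{subsection: cofree differentiation} is in place, the point-set commutativity of the squares and the two-out-of-three deduction of the right-hand column are routine.
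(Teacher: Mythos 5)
Your strategy matches the paper's: establish that each horizontal identity adjunction is a Quillen equivalence (Propositions~\ref{prop: homogeneous invariant under QE}, \ref{prop: invariance of intermediates}, and the surrounding results), note the left column is the classification equivalence of~\cite{TaggartReality}, observe the squares commute on the nose, and deduce the right-hand column by two-out-of-three. However, two of the claims you rely on in the commutativity-and-derivation step are false as stated, and you gloss over the one point the paper is explicitly careful about.

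First, the free and cofree model structures on $C_2\T$ do \emph{not} ``share the same cofibrations.'' The cofree model structure is a left Bousfield localization of the \emph{fine} model structure, so its cofibrations are the genuine cofibrations (generating set with cells of the form $(C_2/H)_+ \wedge (S^{k-1}\to D^k)_+$ for all $H\leq C_2$), whereas the free model structure has only the free cells $(C_2)_+\wedge(S^{k-1}\to D^k)_+$. The free cofibrations form a proper subclass of the cofree ones; the identity $C_2\T^{\sf{free}}\to C_2\T^{\sf{cofree}}$ is left Quillen, not an isomorphism of cofibration classes. Your argument still survives in spirit because a free-cofibrant replacement is also cofree-cofibrant and the weak equivalences agree, but the claim as written is incorrect and should be repaired. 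Second, the assertion that the right adjoints $\ind_0^n\varepsilon^\ast$, $(\alpha_n^\mbf{R})^\ast$, and $\psi$ ``preserve all weak equivalences'' is not true in the localized model structures: e.g.\ $\psi$ is a levelwise mapping-space construction which preserves stable equivalences only between suitably fibrant objects, and the paper accordingly checks that $\psi$ preserves acyclic fibrations and fibrant objects rather than all weak equivalences. Similarly, the paper's proof of Theorem~\ref{thm: differentiation invariant under QE} checks that $\ind_0^n\varepsilon^\ast$ sends homogeneous-fibrant functors to $n\Omega$-spectra and sends weak equivalences between levelwise-fibrant $n$-polynomial functors to $n$-stable equivalences.

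Finally, you treat the two-out-of-three step as ``routine,'' but the paper flags exactly this as the delicate point: Quillen \emph{adjunctions} do not satisfy two-out-of-three (see the footnote in \S\ref{subsection: equiv of calc}), so one must first \emph{independently} verify that the cofree-side adjunction in each square is a Quillen adjunction before two-out-of-three for Quillen \emph{equivalences} can be invoked. This verification is precisely the ``standard techniques'' content of Theorems~\ref{thm: differentiation invariant under QE} and~\ref{thm: intermediate and spectra invariant} and of the proposition on $(L_\psi,\psi)$ in the cofree model. Your proposal never performs this check; the bullet about right adjoints preserving all weak equivalences is an implicit attempt at it, but as noted that claim fails, so the gap is genuine rather than cosmetic. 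To close it, follow the paper: show in each square that the right adjoint on the cofree side preserves (acyclic) fibrations of the localized model structure or equivalently preserves fibrant objects and weak equivalences between them, then apply two-out-of-three for Quillen equivalences.
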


In fact, the preceding theorems are stronger than indicated: we construct particular model structures and provide explicit Quillen equivalences rather than an abstract equivalence of $\infty$-categories. Using the cofree model and exploiting the equivalence between homotopy fixed points and fixed points, we can directly compare the polynomial approximations. To distinguish between constructions made in orthogonal calculus and constructions made in calculus with Reality we invoke a superscript $\mbf{O}$ and $\mbf{R}$ to denote orthogonal calculus and calculus with Reality constructions, respectively. 

\begin{thm*}[Theorem {\ref{thm: polynomial approximation preserved}}]
Let $n$ be a non-negative integer. If $F$ is an fibrant functor in the cofree model for calculus with Reality, then there is a levelwise weak equivalence of orthogonal functors
\[
c^\ast(T_n^\mbf{R}F)^{C_2} \longrightarrow T_n^\mbf{O}(c^\ast F^{C_2}).
\]
\end{thm*}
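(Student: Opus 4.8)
The plan is to unfold $T_n^\mbf{R}$ as a sequential homotopy colimit of a finite homotopy limit and to commute the $C_2$-fixed points functor through both pieces, the cofree model being precisely what is needed to make this legitimate. Recall that $T_n^\mbf{R}F=\hocolim_k(\tau_n^\mbf{R})^kF$, where $\tau_n^\mbf{R}$ is the functor from the construction of $T_n^\mbf{R}$ with $(\tau_n^\mbf{R}G)(V)=\holim_{\es D}G(U\oplus V)$ and $\es D$ is the finite poset of non-zero Real subspaces $U$ of the Real inner product space $\bb C^{n+1}$; the assignments $U\mapsto U^{C_2}$ and $W\mapsto\bb C\otimes W$ are mutually inverse isomorphisms identifying $\es D$ with the poset of non-zero subspaces of $\bb R^{n+1}$ which indexes $\tau_n^\mbf{O}$, and $c$ is complexification. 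The comparison map of the theorem is assembled from the canonical maps expressing that $c^\ast(-)^{C_2}$ commutes with these homotopy (co)limits — using the identification of indexing posets above (or, if the two indexing categories differ a priori, a cofinality argument replacing it) — and the work is to prove each such map is a levelwise weak equivalence. Concretely it suffices to establish
\[
c^\ast\bigl(\tau_n^\mbf{R}G\bigr)^{C_2}\;\simeq\;\tau_n^\mbf{O}\bigl(c^\ast G^{C_2}\bigr)
\qquad\text{and}\qquad
\bigl(\hocolim_k X_k\bigr)^{C_2}\;\simeq\;\hocolim_k\,X_k^{C_2}
\]
for $G$ objectwise cofree and for sequences $X_k$ of objectwise-cofree $C_2$-spaces, naturally and compatibly.

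\emph{The homotopy-limit step.} The functor $(-)^{C_2}$ is a right adjoint (to the trivial-action functor), so it preserves limits; since $F$ is fibrant in the cofree model each $F(V)$ is a cofree $C_2$-space, and cofree $C_2$-spaces, being the essential image of the derived cofree/coinduction right adjoint, are closed under homotopy limits, so the diagram $U\mapsto G(U\oplus(\bb C\otimes V))$ occurring in $\tau_n^\mbf{R}$ is objectwise fibrant and its categorical limit computes the homotopy limit. Hence $(-)^{C_2}$ carries $(\tau_n^\mbf{R}G)(\bb C\otimes V)$ to $\holim_{\es D}G(U\oplus(\bb C\otimes V))^{C_2}$; equivalently one first replaces $(-)^{C_2}$ by $(-)^{hC_2}$ using Proposition~\ref{prop: free=cofree} and uses that homotopy limits commute with homotopy limits. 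Re-indexing along $W\mapsto\bb C\otimes W$ identifies the result with $(\tau_n^\mbf{O}(c^\ast G^{C_2}))(V)$. Because cofreeness is again preserved by $\tau_n^\mbf{R}$ (same right-adjoint-essential-image argument), this equivalence may be iterated: $c^\ast\bigl((\tau_n^\mbf{R})^kF\bigr)^{C_2}\simeq(\tau_n^\mbf{O})^k(c^\ast F^{C_2})$ for all $k$, naturally in $V$.

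\emph{The homotopy-colimit step and assembly.} Since $C_2$ is finite, categorical $C_2$-fixed points commute with filtered colimits and with the pushouts along $C_2$-cofibrations that build the mapping telescope, so $(-)^{C_2}$ preserves sequential homotopy colimits of $C_2$-CW objects; this is the point at which one genuinely needs categorical, rather than homotopy, fixed points. Combining with the previous step gives
\[
\begin{aligned}
c^\ast\bigl(T_n^\mbf{R}F\bigr)^{C_2}(V)&=\Bigl(\hocolim_k(\tau_n^\mbf{R})^kF(\bb C\otimes V)\Bigr)^{C_2}\simeq\hocolim_k\bigl((\tau_n^\mbf{R})^kF(\bb C\otimes V)\bigr)^{C_2}\\
&\simeq\hocolim_k(\tau_n^\mbf{O})^k(c^\ast F^{C_2})(V)=T_n^\mbf{O}(c^\ast F^{C_2})(V),
\end{aligned}
\]
and every map in the chain is natural in $V$, so these assemble to the asserted levelwise weak equivalence; since $F$ is cofree, $c^\ast F^{C_2}$ is objectwise fibrant, so $T_n^\mbf{O}$ applied to it is the homotopically meaningful $n$-th polynomial approximation.

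\emph{Main obstacle.} The real difficulty is the tension the introduction highlights: the $\hocolim$–$\holim$ shape of $T_n$ is preserved by categorical $C_2$-fixed points but not by homotopy $C_2$-fixed points, whereas homotopy invariance of $(-)^{C_2}$ — without which the statement is not even well-posed, as $T_n^\mbf{O}$ only sees homotopy types — holds for homotopy but not categorical fixed points. The cofree model reconciles these via Proposition~\ref{prop: free=cofree}, so the crux of the argument is the bookkeeping that keeps one inside the world where this reconciliation applies: that objectwise cofreeness is genuinely preserved by $\tau_n^\mbf{R}$ and all its iterates, and that $c^\ast(-)^{C_2}$ carries the indexing poset of $\tau_n^\mbf{R}$ exactly onto that of $\tau_n^\mbf{O}$, so that no shift in polynomial degree is introduced at this stage.
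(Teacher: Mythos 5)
Your proposal follows essentially the same route the paper takes to Theorem~\ref{thm: polynomial approximation preserved}: commute $c^\ast(-)^{C_2}$ through $\tau_n$ (this is Corollary~\ref{cor: C_2-fixed points preserve tau}), extend to $(\tau_n^\mbf{R})^k$ (Lemma~\ref{lem: C_2-fixed points preserve powers of tau}), and then pass to the sequential homotopy colimit, with the cofree model invoked in exactly the same place to exchange categorical for homotopy $C_2$-fixed points. Two small technical divergences are worth flagging: the paper handles $(\tau_n)^k$ by writing the iterated homotopy limit as a single homotopy limit and re-running the base case there, whereas your iteration additionally relies on the (true, but unverified in your write-up) fact that $\tau_n^\mbf{R}$ preserves objectwise cofreeness; and where you posit a direct isomorphism of indexing posets and only parenthetically allow a cofinality argument as a fallback, the paper's Theorem~\ref{thm: polynomials preserved} in fact carries out that cofinality argument in earnest for the map of \emph{topological} posets $c\colon\mbf{O}_{\le n+1}\to\mbf{R}_{\le n+1}$ — checking that every morphism of $\mbf{R}_{\le n+1}$ is a complexification and that the comma categories $(c\downarrow V_{\bb C})$ have topologically terminal objects — since the relevant homotopy limit is a topological homotopy limit and the continuity of the terminal-object assignment is precisely what the ``topologically terminal'' condition encodes.
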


Our comparison is complete by examining the layers of the respective Weiss towers. 

\begin{thm*}[Corollary {\ref{cor: layers agree}}]
Let $n$ be a non-negative integer. If $F$ is an fibrant functor in the cofree model for calculus with Reality, then there is a levelwise weak equivalence of orthogonal functors
\[
c^\ast(D_n^\mbf{R}F)^{C_2} \longrightarrow D_n^\mbf{O}(c^\ast F^{C_2}).
\]
\end{thm*}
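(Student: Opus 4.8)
The $n$-th layer $D_n^{\mbf{R}}F$ is by definition the levelwise homotopy fibre of the structure map $T_n^{\mbf{R}}F \to T_{n-1}^{\mbf{R}}F$ induced by the inclusion $\bb{R}^n \hookrightarrow \bb{R}^{n+1}$, and likewise $D_n^{\mbf{O}}(c^\ast F^{C_2}) = \hofibre\big(T_n^{\mbf{O}}(c^\ast F^{C_2}) \to T_{n-1}^{\mbf{O}}(c^\ast F^{C_2})\big)$. The plan is to apply the functor $c^\ast(-)^{C_2}$ (objectwise $C_2$-fixed points followed by restriction along $c$) to the first homotopy fibre sequence, to argue that this functor preserves it, and then to identify the resulting sequence with the second one by invoking Theorem~\ref{thm: polynomial approximation preserved} on the two polynomial-approximation terms and on the map between them.

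First I would verify that $c^\ast(-)^{C_2}$ preserves levelwise homotopy fibre sequences of fibrant cofree functors. The restriction functor $c^\ast$ is computed objectwise, so it commutes with levelwise homotopy fibres up to canonical natural equivalence. For the fixed-points functor one uses the defining property of the cofree model: on cofree $C_2$-spaces $(-)^{C_2}$ agrees with the homotopy fixed-points functor $(-)^{hC_2}$ (Proposition~\ref{prop: free=cofree} and $\S$\ref{subsection: cofree differentiation}), and homotopy fixed points, being a homotopy limit, preserve homotopy fibre sequences. Since $F$ is fibrant, the polynomial approximations $T_n^{\mbf{R}}F$ and $T_{n-1}^{\mbf{R}}F$ are again fibrant — in particular levelwise cofree — which is precisely what the cofree model was engineered to guarantee, so the above applies objectwise and produces a levelwise homotopy fibre sequence
\[
c^\ast(D_n^{\mbf{R}}F)^{C_2} \longrightarrow c^\ast(T_n^{\mbf{R}}F)^{C_2} \longrightarrow c^\ast(T_{n-1}^{\mbf{R}}F)^{C_2}.
\]

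Next I would compare this with the homotopy fibre sequence $D_n^{\mbf{O}}(c^\ast F^{C_2}) \to T_n^{\mbf{O}}(c^\ast F^{C_2}) \to T_{n-1}^{\mbf{O}}(c^\ast F^{C_2})$. Applying Theorem~\ref{thm: polynomial approximation preserved} with index $n$ and with index $n-1$ gives levelwise weak equivalences on the two right-hand terms; if these assemble into a homotopy-commutative square over the structure maps $T_n \to T_{n-1}$ of the two towers, then the induced map on levelwise homotopy fibres
\[
c^\ast(D_n^{\mbf{R}}F)^{C_2} \longrightarrow D_n^{\mbf{O}}(c^\ast F^{C_2})
\]
is automatically a levelwise weak equivalence, which is the claim.

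The hard part is this last compatibility, namely the naturality of the comparison equivalence of Theorem~\ref{thm: polynomial approximation preserved} with respect to the inclusions $\bb{R}^{n-1} \hookrightarrow \bb{R}^n \hookrightarrow \bb{R}^{n+1}$ defining the Weiss tower. The most economical way to see it is to observe that the comparison map is built from the evident natural transformations witnessing that $c^\ast(-)^{C_2}$ commutes with the two elementary ingredients of the $T_n$-construction — the (compact) homotopy limit over the poset of non-zero subspaces, and the sequential homotopy colimit — together with the fixed-points/homotopy-fixed-points identification in the cofree model; each of these is natural in the input functor and compatible with the maps relating the $n$-th and $(n-1)$-st stages, so the square commutes strictly before the homotopy colimit is taken and up to coherent homotopy afterwards. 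Alternatively, one can simply note that $c^\ast(-)^{C_2}$ is compatible with the entire Weiss-tower construction rather than with an individual stage, whence the statement about layers is a formal consequence of Theorem~\ref{thm: polynomial approximation preserved}. Either way the remaining work is bookkeeping: the delicate interaction between fixed points and polynomial approximation has already been dealt with in Theorem~\ref{thm: polynomial approximation preserved}.
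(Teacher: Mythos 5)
Your proof is correct, but it takes a genuinely different route from the one intended in the paper. The paper offers no explicit argument for Corollary~\ref{cor: layers agree}; it is presented as a formal consequence of the remark preceding it, namely that $D_n F$ is the universal approximation to $F$ in the $\infty$-category of $n$-homogeneous functors. The machinery supporting that remark is Proposition~\ref{prop: QA for homog model structures} (the adjunction $i^\ast c_! \dashv c^\ast(-)^{C_2}$ descends to an adjunction between the underlying $\infty$-categories of the $n$-homogeneous model structures) together with the unlabeled theorem immediately before (the right adjoint sends $n$-homogeneous functors to $n$-homogeneous functors). From these the corollary follows because a right adjoint between the $n$-homogeneous model structures, applied to a fibrant object, commutes with the reflection onto $n$-homogeneous objects up to weak equivalence. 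You instead bypass the $n$-homogeneous model structure entirely and argue at the level of the defining homotopy fibre sequence $D_n \to T_n \to T_{n-1}$, using that $c^\ast(-)^{C_2}$ preserves homotopy fibres on levelwise-cofree inputs and applying Theorem~\ref{thm: polynomial approximation preserved} at stages $n$ and $n-1$ together with naturality with respect to $\bb{R}^{n-1}\hookrightarrow\bb{R}^n\hookrightarrow\bb{R}^{n+1}$. Your route is more elementary and mirrors the argument the paper actually carries out later for the Weiss-tower theorem; what the paper's route buys is that it packages the needed naturality into the universal property, so no explicit compatibility square need be exhibited.

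One technical point worth sharpening in your writeup: $T_n^\mbf{R}F$ is by construction a sequential homotopy colimit, and sequential colimits of cofree $C_2$-spaces are not automatically cofree, so the assertion that $T_n^\mbf{R}F$ is ``again fibrant, in particular levelwise cofree'' requires either a levelwise fibrant replacement (which is harmless, since the comparison is only up to levelwise weak equivalence) or a direct appeal to the commutation of $C_2$-fixed points with sequential homotopy colimits exactly as in the proof of Theorem~\ref{thm: polynomial approximation preserved}. Once that is spelled out, the argument is sound.
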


To complete our study of the relationship between calculus with Reality and orthogonal calculus we provide a complete relationship between the higher categorical classifications for both calculi. This results in a diagram of $\infty$-categories (induced by a diagram of model categories), see Figure~\ref{fig: model categories}, which commutes up to natural equivalence, see Section~\ref{section: model categories}. In order to construct such a commutative diagram we construct ``multiplicity $n$'' versions of Schwede's comparison functors between orthogonal spectra and real spectra, see e.g.,~\cite[Example 7.11]{Sch19}.

\subsection*{Conventions}
We abuse language and refer to based compactly generated weak Hausdorff spaces as spaces, and similarly for based spaces with an action of a compact Lie group $G$, which we will refer to as $G$-spaces. 

We model $\infty$-categories by quasi-categories as introduced by Boardman and Vogt~\cite{BVQuasi} and developed extensively by Joyal~\cite{JoyalQuasi} and Lurie~\cite{LurieHTT}. Given a topological model category $\es{M}$, the category $\es{M}^\sf{fc}$ of fibrant-cofibrant objects defines a topological category, the nerve of which defines a quasi-category. We refer to the quasi-category $N(\es{M}^\sf{fc})$ as the \emph{underlying} $\infty$-category of $\es{M}$, and denote it by $\es{M}_\infty$. Given a Quillen adjunction (\emph{resp.} Quillen equivalence) of model categories $\adjunction{F}{\es{M}}{\es{N}}{G}$, there is a well-defined adjunction (\emph{resp.} equivalence) of underlying $\infty$-categories, $\adjunction{\mathds{L}F}{\es{M}_\infty}{\es{N}_\infty}{\mathds{R}G}$.

\subsection*{Acknowledgements}
The author wishes to thank L.~Pol for numerous helpful conversations about this work and the referee for numerous helpful suggestions which greatly improved this paper. A portion of this work was completed while the author was in residence at the Institut Mittag-Leffler in Djursholm, Sweden in 2022 as part of the program `Higher algebraic structures in algebra, topology and geometry' supported by the Swedish Research Council under grant no. 2016-06596.  The author was supported by the European Research council (ERC) through the grant “Chromatic homotopy theory of spaces”, grant no. 950048.

\section{A cofree model for calculus with Reality}

\subsection{Models for equivariant spaces}\label{section: equivariant spaces}

In the construction of calculus with Reality~\cite{TaggartReality}, we considered the \emph{underlying} or \emph{coarse} model structure on  based $C_2$-spaces in which a map $f: X \to Y$ is a weak equivalence or fibration if and only if $f$ is a weak equivalence or fibration in the Quillen model structure on based spaces. The cofibrant objects are the \emph{free} $C_2$-spaces, i.e., those spaces which are (up to underlying weak equivalence) of the form $(EC_2)_+ \wedge X$ for $X$ a based space. This model structure is the transfer of the Quillen model structure on based spaces along the adjoint pair
\[
\adjunction{(C_2)_+ \wedge (-)}{\T}{C_2\T}{i^\ast}
\]
and we denote this model structure by $C_2\T^\sf{free}$. In this section we present an alternative model for this homotopy theory which is presented by \emph{cofree} $C_2$-spaces, i.e., we give a model structure on $C_2\T$ in which the weak equivalences are still underlying equivalences but the fibrant objects are $C_2$-spaces of the form $\Map_\ast((EC_2)_+, X)$ for $X$ a based space. Much of what we say in this section is true for arbitrary groups $G$, but we specialise to $G=C_2$ as it is the only case of interest for us.

We start by recalling the \emph{fine} (or ``genuine'') model structure on $C_2$-spaces of which our cofree model structure will be a localization.

\begin{lem}[{\cite[Theorem III.1.8]{MM02}}]
There is a model structure on the category of $C_2$-spaces such that a map $f: X \to Y$ is a weak equivalence or fibration if and only if the induced map on $H$-fixed points $f^H : X^H \to Y^H$ is a weak equivalence or fibration of based spaces for all subgroups $H$ of $C_2$. We denote this model structure by $C_2\T^{\sf{fine}}$. This model structure is cellular, proper and topological. 
\end{lem}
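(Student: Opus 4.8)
The plan is to recall the standard construction of the fine model structure and to explain why it is cellular, proper and topological; everything here is classical, going back to work of May and collaborators and recorded in the cited form by Mandell--May. Since the only subgroups of $C_2$ are the trivial subgroup $e$ and $C_2$ itself, the relevant orbits are $C_2/e = C_2$ and $C_2/C_2 = \ast$. First I would take as generating cofibrations and generating acyclic cofibrations the sets
\[
I = \{\, (C_2/H)_+ \wedge (S^{n-1}_+ \hookrightarrow D^n_+) \,\}
\qquad\text{and}\qquad
J = \{\, (C_2/H)_+ \wedge (D^n_+ \hookrightarrow (D^n \times [0,1])_+) \,\},
\]
indexed over subgroups $H \leq C_2$ and integers $n \geq 0$. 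The single computation that drives everything is the fixed-point adjunction $\Map_{C_2}((C_2/H)_+ \wedge A, Y) \cong \Map_\ast(A, Y^H)$: it shows that a map $f$ is $I$-injective (resp.\ $J$-injective) precisely when $f^H$ is a Serre fibration (resp.\ an acyclic Serre fibration) of based spaces for every $H \leq C_2$, which is exactly the asserted class of fibrations (resp.\ acyclic fibrations).

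Next I would invoke Kan's recognition theorem for cofibrantly generated model categories. The category $C_2\T$ is bicomplete; the domains of $I$ and $J$ are finite $C_2$-CW complexes, hence compact and in particular small relative to the whole category, so the small object argument applies to both sets. The substantive hypotheses that remain are that every relative $J$-cell complex is a weak equivalence and that a map is $I$-injective exactly when it is $J$-injective and a weak equivalence; both reduce to the corresponding statements in $\T$ via the fixed-point characterisation above, using that each functor $(-)^H$ commutes with the pushouts along closed inclusions and the transfinite compositions that build a relative cell complex. This produces the model structure $C_2\T^{\sf{fine}}$, whose cofibrations are the retracts of relative $C_2$-CW complexes.

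Cellularity is then a matter of verifying Hirschhorn's three conditions: the generating cofibrations permit the small object argument (done above); the cofibrations are effective monomorphisms, i.e., each is the equaliser of its cokernel pair $B \rightrightarrows B \cup_A B$, which holds for closed inclusions of compactly generated spaces and hence for retracts of relative $C_2$-CW complexes; and the domains and codomains of $I$ are compact relative to the subcategory of cofibrations, as finite $C_2$-CW complexes are compact. For properness, right properness is immediate, because fibrations and weak equivalences are both detected on $H$-fixed points, $(-)^H$ preserves pullbacks, and $\T$ is right proper. For left properness, a cofibration in $C_2\T^{\sf{fine}}$ is a retract of a relative $C_2$-CW complex, hence an equivariant closed Hurewicz cofibration; its $H$-fixed points are again closed Hurewicz cofibrations, the functors $(-)^H$ preserve the pushouts in question, and the pushout of a weak equivalence of spaces along a closed Hurewicz cofibration is a weak equivalence. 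Finally, being a topological (i.e., $\T$-enriched) model category follows, via the pushout-product axiom, from the identity $\big((C_2/H)_+ \wedge u\big) \mathbin{\square} v \cong (C_2/H)_+ \wedge (u \mathbin{\square} v)$ for $u$ a generator in $I$ or $J$ and $v$ a generating (acyclic) cofibration of $\T$, together with the pushout-product axiom in $\T$.

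The only step that is not pure formalism is the point inside Kan's theorem that relative $J$-cell complexes are weak equivalences, equivalently that the class of $J$-cofibrations lands in the cofibrations that are weak equivalences; this rests \emph{entirely} on the fact that the fixed-point functors $(-)^H$ commute with the colimits produced by the small object argument, which holds here because the maps one glues along are closed inclusions. Granting that, the remainder is bookkeeping with the fixed-point adjunction, and the same outline works verbatim for a general compact Lie group $G$ in place of $C_2$.
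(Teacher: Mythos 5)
The paper does not prove this lemma at all; it is stated as a citation to \cite[Theorem III.1.8]{MM02}, so there is no in-text argument to compare against. Your reconstruction is a correct and essentially standard sketch of the argument one finds in that reference (and in the surrounding literature on equivariant model categories): generating (acyclic) cofibrations built from orbits $C_2/H$ smashed with disk pairs, the fixed-point adjunction $\Map_{C_2}((C_2/H)_+\wedge A,Y)\cong\Map_\ast(A,Y^H)$ to identify the (acyclic) fibrations, the recognition theorem plus the observation that $(-)^H$ commutes with pushouts along closed inclusions and with transfinite composition, and then the usual checks for cellularity, properness, and the pushout--product axiom. You correctly flag the one non-formal point, namely that $(-)^H$ commutes with the colimits appearing in the small object argument because the gluings are along closed inclusions. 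The only caveat worth recording is that your closing remark about the argument working ``verbatim'' for a general compact Lie group should read ``all closed subgroups $H\leq G$'' rather than ``all subgroups''; for $G=C_2$ this is vacuous, which is part of why the paper is content to cite the result and move on.
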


We now state the \emph{cofree} model structure as a localization of the fine model structure, see also~\cite[Example 7.9]{LawsonBousfield}. In particular homotopy $C_2$-fixed points are precisely the right derived functor of $C_2$-fixed points in this model structure. 

\begin{prop}
There is a model structure on the category of $C_2$-spaces such that a map $f: X \to Y$ is a weak equivalence if it is an underlying weak homotopy equivalence or a cofibration if it is a cofibration in the fine model structure. The fibrant objects are the cofree $C_2$-spaces. We denote this model structure by $C_2\T^\sf{cofree}$. This model structure is cellular, proper and topological.
\end{prop}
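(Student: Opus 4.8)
The plan is to obtain $C_2\T^\sf{cofree}$ as a left Bousfield localization of the fine model structure $C_2\T^\sf{fine}$ at the single map $\varepsilon: (EC_2)_+ \to S^0$, and then identify the resulting weak equivalences, cofibrations, and fibrant objects as claimed. Since the excerpt already records that $C_2\T^\sf{fine}$ is cellular, proper, and topological, the existence of the left Bousfield localization at a set (here, a single map) is standard---I would invoke Hirschhorn's existence theorem for left Bousfield localizations of cellular left proper model categories. The localization automatically keeps the same cofibrations and produces a new, cellular, left proper model structure; topological enrichment of the localization is inherited because the localization of a topological (more precisely, simplicial/topological) model category at a set of maps between cofibrant objects is again topological. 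Right properness is not automatic from the general machinery, so that will need a separate argument---see the obstacle paragraph below.

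First I would verify that the fibrant objects of the localization are exactly the cofree $C_2$-spaces. A fine-fibrant object $X$ is local for $\varepsilon$ precisely when $\Map_\ast(S^0, X) \to \Map_\ast((EC_2)_+, X)$ is a weak equivalence, i.e., when $X \to \Map_\ast((EC_2)_+, X)$ (the map induced by $EC_2 \to \ast$) is an underlying, hence $C_2$-, equivalence; this is exactly the statement that $X$ is cofree. Conversely, any $C_2$-space of the form $\Map_\ast((EC_2)_+, Y)$ with $Y$ fine-fibrant is $\varepsilon$-local by the usual adjunction/idempotency argument for Borel completion. Next I would identify the weak equivalences: a map $f: X \to Y$ is a $\varepsilon$-local equivalence iff it induces a weak equivalence on mapping spaces into all $\varepsilon$-local (= cofree fibrant) objects, and a standard computation---using that mapping into $\Map_\ast((EC_2)_+, Z)$ is the same as mapping the underlying space into $Z$---shows this is equivalent to $f$ being an underlying weak equivalence. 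This gives the claimed characterization of weak equivalences and, combined with the retained cofibrations, pins down the model structure completely; cofibrations being the fine cofibrations is automatic for a left Bousfield localization.

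**The main obstacle.** The delicate point is right properness. Left Bousfield localizations of right proper model categories need not remain right proper, so I cannot simply quote the general theory. The plan is to argue directly: a pullback of a $\varepsilon$-local equivalence along a cofree-fibration. The key observation is that in $C_2\T^\sf{cofree}$ the fibrations with fibrant target can be described concretely via the cofree/Borel functor, and underlying weak equivalences are preserved under pullback along underlying (Serre) fibrations---this is just right properness of the Quillen model structure on $\T$ applied levelwise, together with the fact that a cofree-fibration between cofree-fibrant objects is, after applying the Borel completion comparison, controlled by an underlying fibration of spaces. One then reduces the general case to this one by factoring and using that every object receives a cofree-fibrant replacement through an acyclic cofibration. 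Alternatively, and perhaps more cleanly, I would cite~\cite[Example 7.9]{LawsonBousfield} and the discussion around~\cite[Example 7.11]{LawsonBousfield}, where this cofree (Borel) model structure and its properties---including properness---are established in the required generality; the proof in the paper can then be a short verification that the hypotheses of that reference apply to $G = C_2$ acting on based spaces, together with the fibrant-object identification above. The cellular and topological assertions then follow from the corresponding statements for $C_2\T^\sf{fine}$ together with the fact that left Bousfield localization at a set of maps preserves both.
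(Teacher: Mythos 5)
Your proposal takes essentially the same route as the paper: left Bousfield localization of the fine model structure at the map $(EC_2)_+ \to S^0$, with Hirschhorn's existence theorem supplying the localization and its cellularity/left properness, and then the standard adjunction argument identifying the fibrant objects as cofree $C_2$-spaces and the local equivalences as underlying weak equivalences. One small but worthwhile point in your favour: you localize at $(EC_2)_+ \to S^0$, which is the correct map in the based setting, whereas the paper writes $(EC_2)_+ \to \ast$ (localizing at the map to the point would kill the wrong thing; the intended target is the unit $S^0$). You also explicitly flag right properness as a step the general localization machinery does not supply, whereas the paper's proof simply asserts ``proper'' in the statement without arguing the right-proper half; your suggestion to reduce to underlying right properness of $\T$ along the Borel comparison, or to cite Lawson directly, is exactly the kind of argument the paper leaves implicit.
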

\begin{proof}
	This model structure is the left Bousfield localization of the fine model structure at the map $(EC_2)_+ \to \ast$, which exists by~\cite[Theorem 4.1.1]{Hi03}. A space $X$ is $(EC_2)_+$-local if and only if the induced map
	\[
	X^{C_2} \longrightarrow X^{hC_2}
	\]
	is an underlying weak equivalence, i.e., if and only if $X$ is a cofree $C_2$-space. In particular, a localization functor is given by 
	\[
	X \longmapsto \Map_\ast((EC_2)_+, X).
	\]
	It follows that map $f: X \to Y$ is a weak equivalence if and only if 
	\[
	\Map_\ast((EC_2)_+, X)^H \longrightarrow \Map_\ast((EC_2)_+, Y)^H
	\]
	is a weak equivalence of based spaces for all $H \leq C_2$. This last is equivalent to $f: X \to Y$ and $f^{hC_2}:~X^{hC_2} \to Y^{hC_2}$ being underlying weak equivalences, which since homotopy fixed points preserve underlying weak equivalences, is equivalent to $f: X \to Y$ being an underlying weak equivalence. 
	
The general theory of left Bousfield localization implies all of the properties of this model structure except for right properness, which follows since right properness is determined by the weak equivalence class, and the free model structure on $C_2$-spaces is right proper with the same weak equivalences as in the cofree model structure, see e.g.,~\cite[Remark 3.5.6]{BalchinMCat}.
\end{proof}

\begin{prop}\label{prop: free=cofree}
The identity functor
\[
\mathds{1} : C_2\T^{\sf{free}} \longrightarrow C_2\T^{\sf{cofree}}
\]
is a left Quillen equivalence.
\end{prop}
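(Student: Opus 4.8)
The plan is to show that the identity functor $\mathds{1}\colon C_2\T^{\sf{free}} \to C_2\T^{\sf{cofree}}$ is the left adjoint of a Quillen equivalence, with right adjoint again the identity functor $\mathds{1}\colon C_2\T^{\sf{cofree}} \to C_2\T^{\sf{free}}$; the equivalence of underlying $\infty$-categories then follows from the general fact recalled in the Conventions. First I would check that this adjoint pair is a Quillen pair. The cofibrations in $C_2\T^{\sf{cofree}}$ are the underlying cofibrations in the fine model structure; since every cofibration of the fine model structure is in particular a cofibration of the free (coarse) model structure —— the fine model structure has more cofibrations available but on underlying cofibrations the two notions of ``underlying cofibration'' coincide, being cofibrations of based spaces —— and the generating cofibrations of $C_2\T^{\sf{free}}$ are $(C_2)_+ \wedge I$ for $I$ the generating cofibrations of $\T$, which are free and hence cofibrant (indeed cofibrations) in the fine model structure. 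So $\mathds{1}$ preserves cofibrations. Both model structures have the same weak equivalences, namely underlying weak equivalences, so $\mathds{1}$ preserves (trivial) cofibrations, giving a Quillen pair.

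Next I would verify the Quillen equivalence criterion: for a cofibrant object $X$ in $C_2\T^{\sf{free}}$ and a fibrant object $Y$ in $C_2\T^{\sf{cofree}}$, a map $X \to Y$ (of $C_2$-spaces, the same in either category) is a weak equivalence in $C_2\T^{\sf{free}}$ if and only if it is a weak equivalence in $C_2\T^{\sf{cofree}}$. But this is immediate: the weak equivalences in both model structures are exactly the underlying weak equivalences, so the condition holds for \emph{all} maps, not merely the relevant ones. Hence the Quillen pair is a Quillen equivalence, and passing to underlying $\infty$-categories we obtain the desired equivalence, with the induced functor being $\mathds{L}\mathds{1} \simeq \mathds{R}\mathds{1}$, i.e.\ induced by the identity as claimed.

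The only genuinely delicate point —— and the step I would spend the most care on —— is confirming that $\mathds{1}\colon C_2\T^{\sf{free}} \to C_2\T^{\sf{cofree}}$ is left Quillen rather than right Quillen; that is, that cofibrations (and not fibrations) are the structure transported in the ``easy'' direction. The free model structure is a transferred model structure along $(C_2)_+ \wedge (-) \dashv i^\ast$, so its fibrations are detected on underlying spaces, while its cofibrations are generated by $(C_2)_+ \wedge I$. The cofree model structure, being a left Bousfield localization of the fine model structure, has the \emph{same} cofibrations as the fine model structure. So one must check the inclusion of cofibration classes $\sf{cof}(C_2\T^{\sf{free}}) \subseteq \sf{cof}(C_2\T^{\sf{fine}})$; equivalently, that every free $C_2$-CW complex is a $C_2$-CW complex in the genuine sense, which is clear since a free cell $(C_2)_+ \wedge D^k$ is one of the allowed genuine cells. (One should also note the fibrant objects match the stated description: fibrant in $C_2\T^{\sf{cofree}}$ means fibrant in the fine structure and $(EC_2)_+$-local, i.e.\ of the form $\Map_\ast((EC_2)_+, X)$ up to equivalence, as established in the preceding proposition.) Once this containment of cofibrations is in hand, everything else is formal.
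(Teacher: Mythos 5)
Your proof is correct and follows essentially the same strategy as the paper: exhibit $\mathds{1}\colon C_2\T^{\sf{free}} \to C_2\T^{\sf{cofree}}$ as a left Quillen equivalence and invoke the general passage to underlying $\infty$-categories. The minor differences are in emphasis rather than substance. For the left Quillen property, you verify the containment of cofibration classes directly via generating cofibrations (a free cell $(C_2)_+ \wedge D^n_+$ is a genuine cell), while the paper obtains it more abstractly by composing the left Quillen functor $\mathds{1}\colon C_2\T^{\sf{free}} \to C_2\T^{\sf{fine}}$ with the identity into the Bousfield localization. For the Quillen equivalence, your observation is slightly cleaner: since both model structures live on the same underlying category and have the same class of weak equivalences, the Quillen equivalence criterion (a map $X \to Y$ from cofibrant to fibrant is a weak equivalence in one structure iff in the other) holds automatically for all maps. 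The paper instead invokes the derived counit and the explicit underlying equivalence $(EC_2)_+ \wedge \Map_\ast((EC_2)_+, X) \to X$, which, while a useful concrete sanity check, is not strictly needed once one notes the coincidence of weak equivalences. Your extra paragraph about the "delicate point" is a reasonable caution, though the directionality is also immediate from the facts that the cofree structure inherits its cofibrations from the fine structure and that the free structure has fewer cofibrations.
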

\begin{proof}
The identity functor 
\[
\mathds{1}: C_2\T^\sf{free} \to C_2\T^\sf{fine}
\]
is a left Quillen functor from the free model structure to the fine model structure, and since the cofree model structure is a left Bousfield localization of the fine model structure, we obtain a left Quillen functor
\[
\mathds{1} : C_2\T^\sf{free} \longrightarrow C_2\T^\sf{cofree}.
\]
The weak equivalences in the free and cofree model structure agree, hence it suffices to show that either of the (co)units is a weak equivalence. This follows readily from the fact that for any $C_2$-space $X$, the map 
\[
(EC_2)_+ \wedge \Map_\ast((EC_2)_+, X) \longrightarrow X
\]
is an underlying weak equivalence. 
\end{proof}

\subsection{A cofree model for input functors}
Let $\mathds{k}$ denote either $\R$ or  $\C$ , and let $\es{U}$ denote the infinite-dimensional $\mathds{k}$ inner product spaces $\R^\infty$, $\C^\infty$ or $\C \otimes \R^\infty$. The infinite-dimensional inner product space $\es{U}$ is the indexing universe for the various versions of orthogonal calculus, e.g., $\es{U} = \R^\infty$ indexes orthogonal calculus and $\es{U} = \C \otimes \R^\infty$ indexes calculus with Reality. Define $\es{J}$ to be the category of finite-dimensional inner product subspaces of $\es{U}$ with morphism the linear isometries. The category $\es{J}$ is topologically enriched as $\es{J}(V,W)$ is the Stiefel manifold of $\dim(V)$-frames in $W$. By adding a disjoint basepoint to the morphism space produces a $\T$-enriched category, which we continue to denote by $\es{J}$. In the case of calculus with Reality this category is $C_2\T$-enriched with $C_2$ acting by complex conjugation, see~\cite[\S1.2]{TaggartReality}.

The input category for calculus with Reality is the category of $C_2$-equivariant continuous functors $\Fun(\es{J}^\mbf{R}, C_2\T)$ with $C_2$ acting by conjugation, while the input category for orthogonal calculus is the category of continuous functors $\Fun(\es{J}^\mbf{O}, \T)$. When discussing both calculi at once the phrase ``continuous functors'' should be interpreted in the appropriate sense. For calculus with Reality, the adjunction between free and cofree $C_2$-spaces induces an Quillen equivalence on the level of functor categories.

\begin{prop}\label{prop: input invariant under QE}
Let $n$ be a non-negative integer. The adjoint pair
\[
\adjunction{\mathds{1}}{\Fun(\es{J}^\mbf{R}, C_2\T^\sf{free})}{\Fun(\es{J}^\mbf{R}, C_2\T^\sf{cofree})}{\mathds{1}}
\]
is a Quillen equivalence.
\end{prop}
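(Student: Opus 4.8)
The plan is to establish that the adjunction is a Quillen equivalence by reducing to the corresponding statement for $C_2$-spaces, namely Proposition~\ref{prop: free=cofree}, applied levelwise. First I would observe that both functor categories carry projective model structures, in which a map $f: F \to G$ is a weak equivalence or fibration if and only if $f(V): F(V) \to G(V)$ is a weak equivalence or fibration in $C_2\T^\sf{free}$, respectively $C_2\T^\sf{cofree}$, for every object $V$ of $\es{J}^\mbf{R}$; the existence of these projective structures follows from the fact that $C_2\T^\sf{free}$ and $C_2\T^\sf{cofree}$ are cofibrantly generated (indeed cellular) topological model categories, so one transfers along the family of evaluation functors as in~\cite{TaggartReality}. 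Since the weak equivalences in $C_2\T^\sf{free}$ and $C_2\T^\sf{cofree}$ coincide (both are the underlying weak equivalences), the weak equivalences in the two functor categories coincide as well, so the identity functor preserves and reflects weak equivalences.

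Next I would check that $\mathds{1}$ is a left Quillen functor. Because the cofree model structure on $C_2$-spaces is a left Bousfield localization of the fine model structure, and the free model structure is also obtained by localizing (or directly from the generating cofibrations), the identity $\mathds{1}: C_2\T^\sf{free} \to C_2\T^\sf{cofree}$ is left Quillen by Proposition~\ref{prop: free=cofree}; passing to projective model structures on functor categories, the induced identity functor sends generating (trivial) cofibrations of $\Fun(\es{J}^\mbf{R}, C_2\T^\sf{free})$ — which are of the form $\es{J}^\mbf{R}(V,-)_+ \wedge i$ for $i$ a generating (trivial) cofibration of $C_2\T^\sf{free}$ — to (trivial) cofibrations of $\Fun(\es{J}^\mbf{R}, C_2\T^\sf{cofree})$, since $\es{J}^\mbf{R}(V,-)_+ \wedge (-)$ is left Quillen for the projective structure. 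Hence $\mathds{1}$ is left Quillen.

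Finally, to upgrade to a Quillen equivalence: since the weak equivalences on both sides agree, it suffices to show that for one cofibrant $F$ in $\Fun(\es{J}^\mbf{R}, C_2\T^\sf{free})$ and one fibrant $G$ in $\Fun(\es{J}^\mbf{R}, C_2\T^\sf{cofree})$, a map $F \to G$ is a weak equivalence iff its adjunct is; but as the functors are literally the identity, this is automatic once we know that the unit (equivalently counit) is a levelwise weak equivalence. The counit at $G$ is the levelwise map $(EC_2)_+ \wedge \Map_\ast((EC_2)_+, G(V)) \to G(V)$, which is an underlying weak equivalence for every $V$ by the computation already used in the proof of Proposition~\ref{prop: free=cofree}; hence the counit is a weak equivalence in the projective model structure, and $\mathds{1}$ is a left Quillen equivalence. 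The statement about $\infty$-categories then follows from the general principle recalled in the Conventions, that a Quillen equivalence induces an equivalence of underlying $\infty$-categories.

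I expect the only genuine subtlety to be bookkeeping around which model structure on the functor category one uses — in~\cite{TaggartReality} the relevant structure on $\Fun(\es{J}^\mbf{R}, C_2\T^\sf{free})$ may be a localization of the projective structure rather than the projective structure itself (the $n$-polynomial or homogeneous structures enter only in the subsequent propositions, so for this statement it should just be the projective structure), and one must make sure the localized classes of cofibrations/weak equivalences still match up under $\mathds{1}$. This is handled exactly as the localization step in the proof of the cofree model structure for $C_2$-spaces, so no new ideas are needed; the rest is a routine transfer-of-structure argument. Note that the hypothesis ``Let $n$ be a non-negative integer'' in the statement appears to be vestigial — the claim as written does not involve $n$ — so I would either drop it or read it as setting up notation consistent with the neighbouring propositions.
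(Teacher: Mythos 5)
Your proposal is correct and follows essentially the same approach as the paper: establish a Quillen adjunction between projective model structures and then deduce the Quillen equivalence from the corresponding statement for $C_2$-spaces (Proposition~\ref{prop: free=cofree}). The paper verifies the right Quillen condition directly via levelwise (acyclic) fibrations, whereas you verify the dual left Quillen condition via generating cofibrations; these are equivalent and both routine, so the argument is the same in substance.
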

\begin{proof}
Since (acyclic) fibrations are defined levelwise in 	$\Fun(\es{J}, C_2\T^\sf{cofree})$, and the identity functor
\[
\mathds{1} : C_2\T^\sf{cofree} \longrightarrow C_2\T^\sf{free}
\]
is right Quillen, it follows that the induced adjunction on projective model structures is a Quillen adjunction.

To show that the adjunction is a Quillen equivalence it suffices to show that the derived (co)units are equivalences. Unravelling the definition of the (co)unit, one observes that the Quillen equivalence between $C_2\T^\sf{free}$ and $C_2\T^\sf{cofree}$ forces the (co)units to be equivalences on the level of functor categories.
\end{proof}

\subsection{A cofree model for polynomial functors}

For the remainder of this section we let $\es{C}$ denote either $\T$ or $C_2\T$. 

\begin{definition}\label{def: n-poly}
Let $n$ be a non-negative integer. A functor  $F: \es{J} \to \es{C}$ is said to be \emph{polynomial of degree less than or equal $n$} or equivalently \emph{$n$-polynomial} if for all $V \in \es{J}$, the canonical map
\[
F(V) \longrightarrow \underset{0 \neq U \subseteq \mathds{k}^{n+1}}{\holim}~F(V \oplus U)=:\tau_n F(V)
\]
is a weak equivalence in $\es{C}$.
\end{definition}

\begin{rem}
The poset of subspaces of $\mathds{k}^{n+1}$ is a topological poset in that it has a \emph{space} of object and \emph{space} of morphisms, with continuous source, target, composition and insertion of identity maps. The space of objects is a disjoint union of Grassmannian manifolds, while the space of morphisms is a disjoint union of flag manifolds.
\end{rem}

There is a model structure which captures the homotopy theory of $n$-polynomial functors, and hence in which fibrant replacement is $n$-polynomial approximation.

\begin{lem}[{\cite[Proposition 6.5 $\&$ Proposition 6.6]{BO13}},{\cite[Proposition 2.15]{TaggartReality}}]
Let $n$ be a non-negative integer. There is a cellular, proper and topological model category structure on the category of continuous functors $\Fun(\es{J}, \es{C})$ with cofibrations the projective cofibrations and fibrant objects are the levelwise fibrant $n$-polynomial functors. We denote this model structure by $\poly{n}(\es{J}, \es{C})$. 
\end{lem}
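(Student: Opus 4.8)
The plan is to obtain $\poly{n}(\es{J},\es{C})$ as a left Bousfield localization of the projective model structure on $\Fun(\es{J},\es{C})$, following the arguments of~\cite[\S6]{BO13} and~\cite[\S2]{TaggartReality}; I indicate only the shape of the proof and the points where the two cases $\es{C}=\T$ and $\es{C}=C_2\T^\sf{cofree}$ are handled uniformly. First, the projective model structure on $\Fun(\es{J},\es{C})$ --- with levelwise weak equivalences and fibrations, and projective cofibrations --- exists and is cellular, left and right proper, and topological. Indeed $\es{J}$ is a small enriched category and $\es{C}$ enjoys all of these properties: for $\es{C}=\T$ this is classical, and for $\es{C}=C_2\T^\sf{free}$ and $\es{C}=C_2\T^\sf{cofree}$ it is recorded in the propositions above. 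Generating (acyclic) cofibrations are the maps $\es{J}(V,-)\sm i$ with $i$ a generating (acyclic) cofibration of $\es{C}$ and $V$ ranging over a (countable) skeleton of $\es{J}$; cellularity and properness are checked levelwise, and the cotensoring of $\es{C}$ over $\T$ equips the functor category with its topological structure.

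Next, for $V$ in a skeleton of $\es{J}$ set
\[
\gamma_{n+1}(V) \;:=\; \underset{0 \neq U \subseteq \mathds{k}^{n+1}}{\hocolim}\ \es{J}(V\oplus U,-),
\]
a projectively cofibrant functor (a bar construction on representables indexed by the topological poset of nonzero subspaces of $\mathds{k}^{n+1}$), and let $\rho_V\colon \gamma_{n+1}(V)\to \es{J}(V,-)$ be the map induced by the inclusions $V\hookrightarrow V\oplus U$. Let $\es{S}_n=\{\rho_V\}$. By~\cite[Theorem~4.1.1]{Hi03} the left Bousfield localization $L_{\es{S}_n}\Fun(\es{J},\es{C})$ exists and is cellular, left proper and topological, with projective cofibrations and with fibrant objects the levelwise fibrant $\es{S}_n$-local objects. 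To identify these, note that for $F$ levelwise fibrant the enriched Yoneda lemma gives $\map(\es{J}(V,-),F)\simeq F(V)$, while $\map(-,F)$ carries the homotopy colimit $\gamma_{n+1}(V)$ to the corresponding homotopy limit, so that
\[
\map(\gamma_{n+1}(V),F)\ \simeq\ \underset{0\neq U\subseteq \mathds{k}^{n+1}}{\holim}\ \map(\es{J}(V\oplus U,-),F)\ \simeq\ \underset{0\neq U\subseteq \mathds{k}^{n+1}}{\holim}\ F(V\oplus U)\ =\ \tau_n F(V),
\]
and under these identifications $\rho_V^{\ast}$ is the canonical map $F(V)\to\tau_n F(V)$ of Definition~\ref{def: n-poly}. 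Since the topological enrichment already detects all weak equivalences, $F$ is $\es{S}_n$-local if and only if each $F(V)\to\tau_n F(V)$ is a weak equivalence, i.e.\ if and only if $F$ is $n$-polynomial. Hence the fibrant objects of $L_{\es{S}_n}\Fun(\es{J},\es{C})$ are exactly the levelwise fibrant $n$-polynomial functors, and fibrant replacement is $n$-polynomial approximation.

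It remains to treat right properness, as left properness, cellularity and the topological structure are automatic from the previous paragraph. Following~\cite[Proposition~6.6]{BO13}, this rests on the fact that the fibrant replacement functor of the localized structure --- a sequential homotopy colimit of iterates of the finite homotopy limit $\tau_n$ --- preserves homotopy pullbacks: given a pullback square one leg of which is a fibration and the opposite leg an $\es{S}_n$-equivalence, right properness of $\es{C}$ makes the square a levelwise homotopy pullback, and applying the fibrant replacement functor shows the fourth leg is again an $\es{S}_n$-equivalence. The points that are \emph{genuinely} not formal are precisely this right properness statement and the claim that $\rho_V$ detects $n$-polynomiality --- that $\gamma_{n+1}(V)$ is honestly cofibrant as a homotopy colimit over the topological poset of subspaces of $\mathds{k}^{n+1}$, and that $\map(-,F)$ converts it into the homotopy limit defining $\tau_n$. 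For $\es{C}=C_2\T^\sf{cofree}$ no new work is needed beyond the cited arguments, since those used only that $\es{C}$ is a cellular, proper, topological model category with the prescribed weak equivalences, which holds by the earlier results of this section.
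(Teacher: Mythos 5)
Your proposal reconstructs the result by left Bousfield localizing the projective model structure at the set of maps $\gamma_{n+1}(V)\to\es{J}(V,-)$, identifying the local objects via enriched Yoneda and the homotopy-limit/colimit duality, and treating right properness by a separate argument about $T_n$ preserving homotopy pullbacks. This is exactly the strategy of the cited sources and coincides with what the paper itself records in the remark immediately following the lemma (where the localizing set is written in precisely this $\hocolim$ form and noted to agree with the sphere-bundle projections $S\gamma_{n+1}(V,-)_+\to\es{J}(V,-)$ via Weiss's Proposition 4.2).

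One place where the sketch is a bit quick and worth flagging: in the case $\es{C}=C_2\T$, the enriched Yoneda isomorphism $\map(\es{J}(V,-),F)\simeq F(V)$ is an isomorphism of $C_2$-spaces, whereas the locality criterion in Hirschhorn's framework is stated in terms of $\T$-valued homotopy function complexes. One must therefore either check that the representables/$\gamma_{n+1}(V)$ remain projectively cofibrant in the chosen model on $C_2\T$ (true in the cofree case where $\es{J}^\mbf{R}(V,W)$ is a $C_2$-CW complex, but not in the free case, where a further $(EC_2)_+$-replacement is needed), or else argue --- as the references effectively do --- that the resulting local condition on (homotopy) fixed points of $F(V)\to\tau_nF(V)$ is equivalent to the underlying equivalence defining $n$-polynomiality, for instance by passing through the Bousfield--Friedlander description at the endofunctor $T_n$. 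Your closing remark that ``no new work is needed beyond the cited arguments'' correctly locates where this verification lives, but the phrase ``the topological enrichment already detects all weak equivalences'' glosses over this equivariant wrinkle and should be made precise if the proof were to be written out in full.
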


\begin{rem}
The $n$-polynomial model structure is the left Bousfield localization of the projective model structure at the set
\[
\left\{ \ \underset{0 \neq U \subset \mathds{k}^{n+1}}{\hocolim}~\es{J}(V \oplus U, -) \longrightarrow \es{J}(V,-) \ \mid \ V \in \es{J} \ \right\}.
\]
By the relevant version of \cite[Proposition 4.2]{We95} to the known version of Weiss calculus under consideration, there is an identification
\[
S\gamma_{n+1}(V,W)_+ \cong \underset{0 \neq U \subset \mathds{k}^n}{\hocolim}~\es{J}_0(V \oplus U, W)
\]
of the sphere bundle of $\gamma_{n+1}(V,W)$ with a topological homotopy colimit of Stiefel manifolds, hence the set of maps is equivalently described via the projection maps of the bundle $S\gamma_{n+1}(V,-)$. 
\end{rem}

\begin{rem}
The underlying $\infty$-category of the $n$-polynomial model structure is the $\infty$-category of $n$-polynomial functors. This should be equivalently defined as the full $\infty$-subcategory of the $\infty$-category of functors $\Fun(\es{J}, \es{C})$ spanned by the $n$-polynomial functors, if one suitably interprets Definition~\ref{def:  n-poly} in an appropriate $\infty$-categorical sense. 
\end{rem}

For calculus with Reality, the adjunction between free and cofree $C_2$-spaces induces a Quillen equivalence between $n$-polynomial model structures.

\begin{prop}\label{prop: polynomial invariant under QE}
Let $n$ be a non-negative integer. The adjoint pair
\[
\adjunction{\mathds{1}}{\poly{n}(\es{J}, C_2\T^\sf{free})}{\poly{n}(\es{J}, C_2\T^\sf{cofree})}{\mathds{1}}
\]
is a Quillen equivalence.
\end{prop}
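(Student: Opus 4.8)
The plan is to mimic the proof of Proposition~\ref{prop: input invariant under QE}, now one Bousfield localization further down. First I would observe that the $n$-polynomial model structure $\poly{n}(\es{J},C_2\T^\sf{cofree})$ is, by the Remark above, the left Bousfield localization of the projective model structure on $\Fun(\es{J},C_2\T^\sf{cofree})$ at the set of maps $\{\hocolim_{0\neq U\subset\mathds{k}^{n+1}}\es{J}(V\oplus U,-)\to\es{J}(V,-)\mid V\in\es{J}\}$, and likewise for the free model; crucially this \emph{same} set of maps is used in both cases, since the set only depends on the enriched category $\es{J}$ and not on the chosen model structure on $C_2\T$. So I would first establish that $\mathds{1}:\poly{n}(\es{J},C_2\T^\sf{free})\to\poly{n}(\es{J},C_2\T^\sf{cofree})$ is a left Quillen functor. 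This follows because $\mathds{1}:\Fun(\es{J},C_2\T^\sf{free})\to\Fun(\es{J},C_2\T^\sf{cofree})$ is left Quillen on the projective model structures (as in Proposition~\ref{prop: input invariant under QE}, noting that the projective cofibrations agree in both cases since $C_2\T^\sf{free}$ and $C_2\T^\sf{cofree}$ have the same cofibrations — both being localizations/transfers with underlying cofibrations — wait, one should be slightly careful here), and localization at a set of maps preserves left Quillen functors that already take the localizing maps to weak equivalences, which is automatic here since the target localizes at the image of exactly those maps.

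Next I would argue that this left Quillen functor is a Quillen equivalence. Here the general principle is the one alluded to in the Remark after the main-results discussion: a left Bousfield localization of a Quillen equivalence at a set $S$ on one side and its image on the other is again a Quillen equivalence. Concretely, since $\mathds{1}:\Fun(\es{J},C_2\T^\sf{free})\to\Fun(\es{J},C_2\T^\sf{cofree})$ is already a Quillen equivalence before localizing (Proposition~\ref{prop: input invariant under QE}), and since localizing the projective model structures at the same set $S$ of maps on both sides produces model categories whose identity Quillen pair still has derived (co)unit an equivalence — because the derived unit/counit is computed the same way, just using a larger class of fibrant objects and local equivalences, and the weak equivalences of the localized structures are the $S$-local equivalences defined identically in terms of mapping spaces into $S$-local objects — the localized identity adjunction remains a Quillen equivalence. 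I would phrase this via the standard criterion: $\mathds{1}$ is a Quillen equivalence iff for every cofibrant $X$ and fibrant $Y$ in the localized structures, a map $X\to\mathds{1}Y$ is a weak equivalence iff its adjunct $\mathds{1}X\to Y$ is; since cofibrant objects are unchanged, fibrant objects in both localizations are the $n$-polynomial ones, and local equivalences on both sides coincide with the underlying (hence free-model, hence cofree-model) $n$-polynomial equivalences, this reduces to Proposition~\ref{prop: input invariant under QE} together with Proposition~\ref{prop: free=cofree}.

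The main obstacle I anticipate is the bookkeeping around which cofibrations and which weak equivalences appear in each model structure, so that the ``same set of maps'' claim and the ``cofibrant objects are unchanged'' claim are genuinely correct. In particular, I would need to check that the projective cofibrations of $\Fun(\es{J},C_2\T^\sf{free})$ and of $\Fun(\es{J},C_2\T^\sf{cofree})$ agree — equivalently that $C_2\T^\sf{free}$ and $C_2\T^\sf{cofree}$ have the same cofibrations. This is where the structural setup matters: $C_2\T^\sf{cofree}$ is a left Bousfield localization of $C_2\T^\sf{fine}$, so it has the fine (genuine) cofibrations, whereas $C_2\T^\sf{free}$ is the transfer of the Quillen structure along $(C_2)_+\wedge(-)$, so its cofibrations are the free ones, which are \emph{fewer}. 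Hence $\mathds{1}:C_2\T^\sf{free}\to C_2\T^\sf{cofree}$ is left Quillen but not an isomorphism on cofibrations, and correspondingly cofibrant objects of the projective $n$-polynomial structure over the free model are a sub-class. I would therefore be careful to run the Quillen-equivalence criterion using cofibrant objects \emph{of the free model} and fibrant objects \emph{of the cofree model}, rather than pretending cofibrant objects coincide; the argument then goes through because the derived functors only see that the weak equivalences agree (Proposition~\ref{prop: free=cofree}) and that the localizing maps match, which is exactly the input from Proposition~\ref{prop: input invariant under QE}. Modulo this care, the proof is a short two-step argument: left Quillen by localization functoriality, then Quillen equivalence by reduction to the already-established unlocalized equivalence.
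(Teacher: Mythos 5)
Your proof is correct and follows essentially the same approach as the paper: the paper condenses the argument into a single citation of Hirschhorn's Theorem 3.3.20 (which lifts a Quillen equivalence to a Quillen equivalence of left Bousfield localizations when the localizing set of the target is the derived image of that of the source), together with the observation that these localizing sets agree because the relevant representable--hence projectively cofibrant--functors appear on both sides. Your concern about the cofibrations of $C_2\T^\sf{free}$ and $C_2\T^\sf{cofree}$ not coinciding is well spotted but ultimately moot, exactly as you conclude: Hirschhorn's criterion does not require the cofibration classes to agree, only that the unlocalized adjunction be a Quillen equivalence, which is Proposition~\ref{prop: input invariant under QE}.
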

\begin{proof}
Apply \cite[Theorem 3.3.20]{Hi03}, noticing that the localizing set of the cofree $n$-polynomial model structure is the derived image of the localizing set of the  free $n$-polynomial model structure, noting that the representable functors are necessarily cofibrant in any projective model structure.
\end{proof}

We now define the $n$-polynomial approximation functor, the idea being that iterations $(\tau_n)^k$ of $\tau_n$ become closer and closer to being $n$-polynomial as $k$ increases. 

\begin{definition}
Let $n$ be a non-negative integer. For any continuous functor $F: \es{J} \to \es{C}$, define the \emph{$n$-polynomial approximation} $T_nF$ of $F$ to be the sequential homotopy colimit
\[
\hocolim~(F \longrightarrow \tau_n(F) \longrightarrow (\tau_n)^2(F) \longrightarrow \cdots).
\]
\end{definition}

\begin{rem}
In all known versions of Weiss calculus the above homotopy colimit produces an $n$-polynomial functor, see, e.g.,~\cite{We98}. This is a sophisticated result which is the heart of much of the work in constructing new variants of Weiss calculus. For example, the only known proof that $T_nF$ is $n$-polynomial for orthogonal calculus requires: a calculation of the connectivity of the map $S\gamma_{n+1}(V,W) \to \es{J}(V,W)$; showing that the functor $\tau_n$ increases connectivity using an elaborate calculation of the connectivity of topological homotopy limits; and, that 
\[
T_n(S\gamma_{n+1}(V,W)) \to T_n(\es{J}(V,W)),
\]
is a weak equivalence, where $S\gamma_{n+1}(V,W)$ is the sphere bundle of the $(n+1)$-fold orthogonal complement bundle $\gamma_{n+1}(V,W)$ over $\es{J}(V,W)$ with fibre over $f$ given by $\mathds{k}^{n+1} \otimes f(V)^\perp$.  Weiss uses this fact to show that for every continuous functor $F: \es{J} \to \T$ and every non-negative integer $i$, the square
\[\begin{tikzcd}
	{(\tau_n)^iF} & {T_nF} \\
	{(\tau_n)^{i+1}F} & {\tau_nT_nF}
	\arrow[from=1-1, to=2-1]
	\arrow[from=1-1, to=1-2]
	\arrow[from=1-2, to=2-2]
	\arrow[from=2-1, to=2-2]
\end{tikzcd}\]
can always be factored through a weak equivalence, and hence the resulting right-hand side is a weak equivalence. In all known versions of Weiss calculus $T_nF$ is always $n$-polynomial, see e.g.,~\cite{We98}, \cite[Lemma 3.8]{TaggartUnitary} and \cite[Lemma 2.13]{TaggartReality}. 
\end{rem}

\begin{rem}
In all known versions of Weiss calculus the $n$-polynomial model structure may be equivalently described as the Bousfield-Friedlander localization of the projective model structure at the endofunctor 
\[
T_n: \Fun(\es{J}, \es{C}) \longrightarrow \Fun(\es{J}, \es{C})
\]
see e.g.~\cite[Proposition 6.6]{BO13}, \cite[Proposition 3.9]{TaggartUnitary}, and \cite[Proposition 2.15]{TaggartReality}.
\end{rem}

\subsection{A cofree model for the intermediate categories}

In Weiss calculus the layers of the Weiss tower are classified by spectra with an appropriate group action. This classification is through a zigzag of Quillen equivalences, and passes through what is referred to as the $n$-th intermediate category. Heuristically the $n$-th intermediate category is a model for coordinate free spectra of multiplicity $n$, i.e., spectra with structure maps given by suspension with $S^{\mathds{k}^n}$ rather than by suspension with $S^{\mathds{k}}$. This category  is a natural home for the $n$-th derivative of a functor. 

Sitting over the space of linear isometries $\es{J}(V,W)$ is the $n$-th orthogonal complement vector bundle $\gamma_n(V,W)$, with fibre over a linear isometry $f$ given by $\mathds{k}^n \otimes f(V)^\perp$.  Define the \emph{$n$-th jet category} $\es{J}_n$ to be the category with the same objects as $\es{J}$ and morphism space given by the Thom space of the $n$-th orthogonal complement vector bundle.  For the remainder of this section let $G_n$ denote the group such that spectra with a $G_n$-action classify $n$-homogeneous functors in Weiss calculus. For example, for orthogonal calculus $G_n=O(n)$, see~\cite{We95}, and for calculus with Reality, $G_n= C_2 \ltimes U(n)$, see \cite{TaggartReality}. We denote by $\es{C}[G_n]$ the category of $G_n$-objects in $\es{C}$. For $\es{C}=\T$ and $G_n=O(n)$ this is the category of $O(n)$-spaces, and for $\es{C} = C_2\T$ and $G_n= C_2 \ltimes U(n)$ this is the category of $C_2 \ltimes U(n)$-spaces.

\begin{definition}
Let $n$ be a non-negative integer. Define the \emph{$n$-th intermediate category} to be the category of $G_n$-equivariant continuous functors from $\es{J}_n$ to $\es{C}[G_n]$. We denote this category by $\Fun_{G_n}(\es{J}_n, \es{C}[G_n])$.
\end{definition}

The $n$-th intermediate category may be equipped with a stable model structure, called the $n$-stable model structure, which is a left Bousfield localization of the projective model structure. This is reminiscent of the stable model structure on spectra since the $n$-th intermediate category is equivalent to a category of modules over the commutative monoid
\[
n\bb{S} : V \longmapsto S^{\mathds{k}^n \otimes V}
\]
in the category of $G_n$-equivariant $\es{I}$-spaces, where $\es{I}$ is the category with the same objects as $\es{J}$ but only linear isometric isomorphisms, see e.g.,~\cite[Proposition 7.4]{BO13} or \cite[Proposition 4.13]{TaggartReality}.

\begin{lem}[{\cite[Proposition 7.14]{BO13}},{\cite[Theorem 4.18]{TaggartReality}}]
Let $n$ be a non-negative integer. There is a model category structure on the $n$-th intermediate category such that the cofibrations are the projective cofibrations and the fibrant objects are the $n\Omega$-spectra, i.e., those levelwise fibrant functors $X$ for which the adjoint structure maps 
\[
X(V) \to \Omega^{\mathds{k}^n \otimes W}X(V \oplus W)
\]
are weak equivalences in $\es{C}$ for all $V, W\in\es{J}_n$. 
\end{lem}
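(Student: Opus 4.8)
The plan is to construct the $n$-stable model structure as a left Bousfield localization of the projective model structure on $\Fun_{G_n}(\es{J}_n, \es{C}[G_n])$, following the referenced proofs. First I would record the input. The category $\es{C}[G_n]$ (the model category of $O(n)$-spaces, resp.\ of $C_2 \ltimes U(n)$-spaces, used throughout) is cellular, proper and topological, and $\es{J}_n$ is a small topologically enriched category whose morphism objects --- the Thom spaces of the complement bundles $\gamma_n$ --- are cofibrant; hence the projective model structure on $\Fun_{G_n}(\es{J}_n, \es{C}[G_n])$ exists and is cellular, left proper and topological, so that a left Bousfield localization at any set of maps exists by \cite[Theorem 4.1.1]{Hi03}.

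Second, I would identify the localizing set. Using the identification of the $n$-th intermediate category with the category of modules over the monoid $n\bb{S}\colon V \mapsto S^{\mathds{k}^n \otimes V}$ recalled above, the free $G_n$-equivariant modules are (twists of) the representable functors $\es{J}_n(V,-)$, and the module structure supplies, for each pair $V, W \in \es{J}_n$, a canonical map of $G_n$-equivariant continuous functors
\[
\zeta_{V,W}\colon S^{\mathds{k}^n \otimes W} \sm \es{J}_n(V\oplus W, -) \longrightarrow \es{J}_n(V,-),
\]
adjoint to the identity at level $V \oplus W$. Let $S$ be the set of these maps (replaced, if necessary, by cofibrations between cofibrant objects --- harmless, since representables are projectively cofibrant). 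By the enriched Yoneda lemma, applying $\Map(-,X)$ to $\zeta_{V,W}$ recovers exactly the adjoint structure map $X(V) \to \Omega^{\mathds{k}^n\otimes W}X(V\oplus W)$; consequently a projectively fibrant (that is, levelwise fibrant) functor $X$ is $S$-local if and only if it is an $n\Omega$-spectrum.

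Third, I would invoke \cite[Theorem 4.1.1]{Hi03} again to conclude that the localization of the projective model structure at $S$ exists, is cellular, left proper and topological, has cofibrations exactly the projective cofibrations, and has fibrant objects precisely the $S$-local levelwise fibrant functors, i.e.\ the $n\Omega$-spectra. This is the asserted $n$-stable model structure. The property not handed to us by Hirschhorn's machinery is \emph{right} properness, and this is where I expect the real work to lie: I would establish it by giving an intrinsic description of the $S$-local (``$n$-stable'') equivalences --- equivalently, by identifying the localization with the stable model structure on $n\bb{S}$-modules --- and checking that, because $n\bb{S}$ is built from representation spheres and satisfies the relevant connectivity estimates, this class of maps is stable under base change along fibrations. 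The same connectivity bookkeeping that underlies the proof that $T_nF$ is $n$-polynomial is what powers this step. Apart from right properness, the only other point that needs care is making the enriched Yoneda computation, and the $G_n$-equivariance of the representables, precise using that $\es{J}_n(V,W)$ is the Thom space of $\gamma_n(V,W)$; everything else is formal.
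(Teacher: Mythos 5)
Your proposal follows essentially the same route as the cited references: construct the $n$-stable model structure as a left Bousfield localization of the projective model structure at the set of maps $S^{\mathds{k}^n \otimes W} \sm \es{J}_n(V\oplus W, -) \to \es{J}_n(V,-)$, then identify the $S$-local levelwise fibrant objects with the $n\Omega$-spectra via the enriched Yoneda lemma. The paper does not reprove this lemma but only cites \cite{BO13} and \cite{TaggartReality}, and the localizing set you wrote down is exactly the one the paper later records when proving Proposition~\ref{prop: invariance of intermediates}, so the agreement is complete.

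One remark on scope: the statement you are proving does not assert that the $n$-stable model structure is right proper (unlike the corresponding lemma for the $n$-polynomial model structure, which explicitly claims ``cellular, proper and topological''). Hirschhorn's theorem already hands you cellularity, left properness, topologicality, the description of cofibrations, and the description of fibrant objects, which is everything this lemma claims. The right-properness discussion you flag as ``the real work'' is genuine content in \cite[Proposition 7.14]{BO13} and \cite[Theorem 4.18]{TaggartReality} (where it is established by identifying the local equivalences with $n\pi_\ast$-isomorphisms), but it is not needed for the statement as given here; you could safely drop that paragraph and have a complete proof of what is actually asserted.
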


\begin{rem}
The weak equivalences of the $n$-stable model structure are the ``stable equivalences'', i.e., the local equivalences. In all known versions of Weiss calculus, the stable equivalences are precisely the $n\pi_\ast$-isomorphisms, where the $n$-homotopy groups of a functor $X:\es{J}_n^\es{C} \to \es{C}$ are given by
\[
n\pi_kX = \underset{q}{\colim}~\pi_{\dim_\R(\mathds{k}^n)q+k}X(\mathds{k}^q)
\]
see e.g.,~\cite[Proposition 7.14]{BO13}, \cite[Proposition 5.6]{TaggartUnitary}, and \cite[Proposition 5.18]{TaggartReality}.
\end{rem}

For calculus with Reality, the adjunction between free and cofree $C_2$-spaces induces a Quillen equivalence between the $n$-stable model structures.

\begin{prop}\label{prop: invariance of intermediates}
Let $n$ be a non-negative integer. The adjoint pair
\[
\adjunction{\mathds{1}}{\Fun_{G_n}(\es{J}_n, (C_ 2\ltimes U(n))\T^\sf{free})}{\Fun_{G_n}(\es{J}_n, (C_2 \times U(n))\T^\sf{cofree})}{\mathds{1}}
\]
is a Quillen equivalence.
\end{prop}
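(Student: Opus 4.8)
The plan is to run the argument of Propositions~\ref{prop: input invariant under QE} and~\ref{prop: polynomial invariant under QE} one level further along, now at the $n$-th intermediate category. Recall that the $n$-stable model structure is the left Bousfield localization of the projective model structure at the set $S$ of stabilisation maps exhibiting $\Fun_{G_n}(\es{J}_n, \es{C}[G_n])$ as modules over $n\bb{S}$ (cf.~\cite[Proposition 7.4]{BO13}, \cite[Proposition 4.13]{TaggartReality}), whose local objects are the levelwise fibrant $n\Omega$-spectra.

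First I would check that the identity adjunction is a Quillen equivalence between the two \emph{projective} model structures --- this is the $G_n$-equivariant, $\es{J}_n$-indexed analogue of Proposition~\ref{prop: input invariant under QE}. Since (acyclic) fibrations in the projective model structures are detected levelwise and $\mathds{1}: C_2\T^\sf{cofree} \to C_2\T^\sf{free}$ is right Quillen (as in the proof of Proposition~\ref{prop: free=cofree}), the pair is a Quillen adjunction; and since the weak equivalences on both sides are the levelwise underlying equivalences, it suffices to check the derived (co)units, which are levelwise of the form $X \to \Map_\ast((EC_2)_+, X)$ and hence underlying equivalences. The semidirect-product group $G_n = C_2 \ltimes U(n)$ plays no role at this stage: the identity functor is $G_n$-equivariant, and the free/cofree dichotomy concerns only the coarse model structure underlying $\es{C}[G_n]$.

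Next I would localize. By~\cite[Theorem 3.3.20]{Hi03} a Quillen equivalence between left proper cellular model categories descends to a Quillen equivalence between the left Bousfield localizations at a set $S$ and at its left-derived image $\mathbf{L}\mathds{1}(S)$. Taking $S$ to be the set of stabilisation maps, one then argues, exactly as in Proposition~\ref{prop: polynomial invariant under QE}, that the localizing set of the cofree $n$-stable model structure is the derived image of the localizing set of the free $n$-stable model structure: the $n\Omega$-spectrum condition is phrased entirely through underlying weak equivalences, which coincide for the free and cofree coarse structures on $\es{C}[G_n]$. Combining these, $(\mathds{1},\mathds{1})$ is a Quillen equivalence between the $n$-stable model structures, and therefore induces an equivalence of underlying $\infty$-categories.

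The step I expect to require the most care is this last identification --- that localizing the cofree projective model structure at $\mathbf{L}\mathds{1}(S)$ returns precisely the cofree $n$-stable model structure of~\cite[Theorem 4.18]{TaggartReality}. As in the $n$-polynomial case, this should reduce to the observation that an object is $\mathbf{L}\mathds{1}(S)$-local (equivalently $S$-local) in the common underlying $\infty$-category exactly when it is an $n\Omega$-spectrum, and that this condition, together with the notion of levelwise weak equivalence, is insensitive to the passage from the free to the cofree coarse model structure on $\es{C}[G_n]$.
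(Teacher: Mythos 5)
Your proposal follows the same two-step strategy as the paper: first establish a Quillen equivalence between the projective model structures (by an argument parallel to Proposition~\ref{prop: input invariant under QE}), then invoke~\cite[Theorem 3.3.20]{Hi03} after matching the localizing sets. The one place you are slightly less crisp is in the final identification: rather than reasoning via local objects and the $n\Omega$-spectrum condition, the paper simply observes that the localizing set is literally the same explicit set
\[
\{ \es{J}_n(V \oplus W, -) \wedge S^{\mathds{k}^n \otimes W}  \longrightarrow \es{J}_n(V,-) \mid V, W \in \es{J}_n\}
\]
for both the free and cofree $n$-stable model structures, and that the sources and targets are already cofibrant in the projective model structure, so the derived image of the set is the set itself. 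Your argument is correct, but noting the cofibrancy of the representable/Thom-spectrum building blocks shortcuts the need to compare local objects.
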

\begin{proof}
An analogous argument to that of Proposition \ref{prop: input invariant under QE} yields a Quillen equivalence between projective model structures on the $n$-th intermediate categories. To lift to a Quillen equivalence between the $n$-stable model structures, it suffices by~\cite[Theorem 3.3.20]{Hi03} to show that the derived image of the localizing set of the free $n$-stable model structure under the left adjoint is precisely the localising set for the cofree $n$-stable model structure. This follows as the localising set in both cases is
\[
\{ \es{J}_n(V \oplus W, -) \wedge S^{\mathds{k}^n \otimes W}  \longrightarrow \es{J}_n(V,-) \mid V, W \in \es{J}_n\}
\]
and the source and target are cofibrant.
\end{proof}

Let $\s^\es{U}[G_n]$ denote the category of $\es{U}$-spectra with a $G_n$-action. For orthogonal calculus this is precisely the category $\s^\mbf{O}[O(n)]$ of orthogonal spectra with a $O(n)$-action and for calculus with Reality this is the category $\s^\mbf{R}[U(n)]$ of \emph{real spectra} with a $U(n)$-action, see e.g.,~\cite[\S5]{TaggartReality}. For calculus with Reality, the adjunction between free and cofree $C_2$-spaces induces a Quillen equivalence between the categories of \emph{real spectra} with a $U(n)$-action.

\begin{prop}
Let $n$ be a non-negative integer. The adjoint pair
\[
\adjunction{\mathds{1}}{\s^\mbf{R}[U(n)]^\sf{free}}{\s^\mbf{R}[U(n)]^\sf{cofree}}{\mathds{1}}
\]
is a Quillen equivalence.
\end{prop}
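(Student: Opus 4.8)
The plan is to upgrade the identity adjunction to a Quillen equivalence between the stable model structures on $\s^\mbf{R}[U(n)]$ built from $C_2\T^\sf{free}$ and from $C_2\T^\sf{cofree}$, running the argument of Proposition~\ref{prop: invariance of intermediates} one more time. First I would observe that the identity adjunction is a Quillen adjunction between the two \emph{level} model structures: level (acyclic) fibrations in $\s^\mbf{R}[U(n)]^\sf{cofree}$ are detected levelwise in $C_2\T^\sf{cofree}$, and since $\mathds{1}: C_2\T^\sf{cofree} \to C_2\T^\sf{free}$ is right Quillen (Proposition~\ref{prop: free=cofree}), applying it levelwise preserves level (acyclic) fibrations. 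Because the weak equivalences of both level structures are the levelwise underlying weak equivalences of $C_2$-spaces, this is automatically a Quillen equivalence of level structures.

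To pass to the stable structures, I would use that each stable model structure on $\s^\mbf{R}[U(n)]$ is a left Bousfield localization of the corresponding level structure at the set of stabilization (``$\lambda$-'') maps that force the adjoint structure maps of an $\Omega$-spectrum to be weak equivalences. By~\cite[Theorem 3.3.20]{Hi03} it is enough to check that the identity left adjoint carries the localizing set of the free stable structure to a set with the same local equivalences as the localizing set of the cofree stable structure. In fact it carries it to the very same set of maps, and the sources and targets of these maps (shifted free real spectra on representation spheres) are cofibrant in both projective model structures — exactly as in Proposition~\ref{prop: invariance of intermediates} — so the derived image of the localizing set coincides with its image. Hence the identity adjunction descends to a Quillen equivalence of stable model structures and induces the asserted equivalence of underlying $\infty$-categories.

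An alternative route, which avoids the cofibrancy bookkeeping entirely, is to note that the zigzag of Quillen equivalences identifying the $n$-th intermediate category $\Fun_{G_n}(\es{J}_n^\mbf{R}, (C_2 \ltimes U(n))\T)$ with $\s^\mbf{R}[U(n)]$ (from~\cite[\S5]{TaggartReality}) is constructed by the same recipe whether $C_2\T$ carries the free or the cofree model structure, and every functor in it strictly commutes with the identity functor $\mathds{1}$. This produces a ladder of zigzags of Quillen equivalences commuting with the vertical identity functors; composing along each horizontal zigzag and invoking two-out-of-three for equivalences of $\infty$-categories, together with Proposition~\ref{prop: invariance of intermediates} for the left-hand end, forces the right-hand identity adjunction — the one in the statement — to be an equivalence as well. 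The step I expect to be the main obstacle is, in the direct argument, the verification that the shifted free real spectra on representation spheres are genuinely cofibrant in the projective model structures generated by both $C_2\T^\sf{free}$ \emph{and} $C_2\T^\sf{cofree}$, so that no derived correction intervenes when applying~\cite[Theorem 3.3.20]{Hi03}; passing through the intermediate category as above, or citing the corresponding fact there, is the cleanest way to sidestep it. I would also note in passing that the free/cofree dichotomy concerns only the point-set $C_2$-action and is independent of the $C_2$-universe $\bb{C} \otimes \bb{R}^\infty$ with complex conjugation that indexes the real spectra, so this part of the structure needs no extra attention.
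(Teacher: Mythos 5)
Your Route 1 is correct and is essentially a re-run of the argument used for Proposition~\ref{prop: invariance of intermediates}, transported directly to $\s^\mbf{R}[U(n)]$: Quillen equivalence of level structures, then Bousfield localization via~\cite[Theorem 3.3.20]{Hi03} using that the localizing sets coincide and have cofibrant sources and targets. The paper, however, takes a shortcut that avoids re-doing the verification: citing~\cite[Proposition 5.4]{TaggartReality}, it identifies $\s^\mbf{R}[U(n)]$ as a $(1$-)category with $U(n)$-objects in $\Fun(\es{J}_1^{\mbf{R}}, C_2\T)$, i.e.\ (up to carrying along the external $U(n)$-action) with the $n=1$ intermediate category, so the statement literally \emph{is} the $n=1$ case of Proposition~\ref{prop: invariance of intermediates} and nothing new needs to be checked. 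That is cleaner than your direct argument precisely because the cofibrancy bookkeeping you (rightly) flag as the main obstacle has already been dealt with once and is not repeated.

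Be careful with your Route 2 as written: it is circular in the paper's logical order. The assertion that the cofree $(\alpha_n^{\mbf{R}})_!$ adjunction is part of a zigzag of Quillen \emph{equivalences} is exactly what the paper proves afterwards (Theorem~\ref{thm: intermediate and spectra invariant} establishes only a Quillen adjunction; that it is an equivalence is then extracted from Diagram~\eqref{eqn: intermediates and spectra} by two-out-of-three, which \emph{uses} the proposition you are trying to prove as the right-hand vertical). So you cannot simply invoke "the zigzag is a zigzag of Quillen equivalences in the cofree setting" — to make Route 2 work you would first have to establish that independently, at which point you have done at least as much work as Route 1 or the paper's reduction. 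Either state Route 1 as your main argument (or better, spot the identification the paper uses), and drop Route 2 or rephrase it so it does not presuppose what it is meant to prove.
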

\begin{proof}
By~\cite[Proposition 5.4]{TaggartReality}, we may identify the underlying category of the stable model structures as the category of $U(n)$-objects in the category of $C_2$-equivariant continuous functors $\Fun(\es{J}_1, C_2\T)$. The statement and proof are then the $n=1$ case of Proposition~\ref{prop: invariance of intermediates}.
\end{proof}

There is a well-defined continuous functor
\[
\alpha_n : \es{J}_n  \longrightarrow \es{J}_1, V \longmapsto \mathds{k}^n \otimes V
\]
and given an object $X \in \s^\es{U}[G_n]$, we let $g \in G_n$ act on $(\alpha_n)^\ast X$ by
\[
X(g \otimes V) \circ g_{X(\mathds{k}^n \otimes V)} : (\alpha_n)^\ast X(V) \longrightarrow (\alpha_n)^\ast X(V)
\]
where $X(g \otimes V)$ is the image under the functor $X$ of the \emph{internal} action of $g$ on $\mathds{k}^n \otimes V$, and $g_{X(\mathds{k}^n \otimes V)}$ is the \emph{external} action of $g$ on the $G_n$-object $X(\mathds{k}^n \otimes V)$. This action defines an equivariant continuous functor
\[
(\alpha_n)^\ast : \s^\es{U}[G_n] \longrightarrow \Fun_{G_n}(\es{J}_n, \es{C}[G_n])
\]
with left adjoint 
\[
(\alpha_n)_! : \Fun_{G_n}(\es{J}_n, \es{C}[G_n]) \to \s^\es{U}[G_n],
\]
given by (enriched) left Kan extension along $\alpha_n :\es{J}_n \to \es{J}_1$. 

\begin{prop}[{\cite[Proposition 8.3]{BO13}},{\cite[Theorem 5.2]{TaggartReality}}]
Let $n$ be a non-negative integer. The adjoint pair
\[
\adjunction{(\alpha_n)_!}{ \Fun_{G_n}(\es{J}_n, \es{C}[G_n])}{\s^\es{U}[G_n]}{(\alpha_n)^\ast}
\]
is a Quillen equivalence.
\end{prop}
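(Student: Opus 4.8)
The plan is to realise the asserted equivalence of $\infty$-categories by a Quillen equivalence
\[
\adjunction{(\alpha_n)_!}{\Fun_{G_n}(\es{J}_n, \es{C}[G_n])}{\s^\es{U}[G_n]}{(\alpha_n)^\ast}
\]
between the $n$-stable model structure and the stable model structure; the claim about underlying $\infty$-categories is then immediate from the discussion of Quillen equivalences in the Conventions. For $\es{C} = \T$ and $G_n = O(n)$ this Quillen equivalence is \cite[Proposition 8.3]{BO13}, and for $\es{C} = C_2\T$ with the free model structure and $G_n = C_2 \ltimes U(n)$ it is \cite[Theorem 5.2]{TaggartReality}; below I recall the uniform shape of the argument and then deduce the remaining, cofree, case from these.

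First I would verify that $((\alpha_n)_!, (\alpha_n)^\ast)$ is a Quillen adjunction for the projective model structures. As $(\alpha_n)^\ast$ is restriction along $\alpha_n : \es{J}_n \to \es{J}_1$, equipped with the internal/external $G_n$-action recorded above, and (acyclic) fibrations of the projective structures are detected levelwise, $(\alpha_n)^\ast$ preserves them and $(\alpha_n)_!$ is left Quillen. To descend to the localised structures I would invoke \cite[Theorem 3.3.20]{Hi03}: it is enough that $\mathds{L}(\alpha_n)_!$ sends the localising set $\{\es{J}_n(V \oplus W, -) \wedge S^{\mathds{k}^n \otimes W} \to \es{J}_n(V,-) \mid V, W \in \es{J}_n\}$ into the stable equivalences of $\s^\es{U}[G_n]$. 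Because left Kan extension of a representable along $\alpha_n$ gives $(\alpha_n)_!\,\es{J}_n(V,-) \cong \es{J}_1(\mathds{k}^n \otimes V, -)$ and $\mathds{k}^n \otimes V \oplus \mathds{k}^n \otimes W = \mathds{k}^n \otimes (V \oplus W)$, this image is literally a subset of the generating set for the stable model structure on $\s^\es{U}[G_n]$; since the sources and targets are cofibrant no further derivation is needed, and the adjunction descends.

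The substance is that this adjunction is a Quillen \emph{equivalence}. Here I would use two inputs. First, $(\alpha_n)^\ast$ carries $\Omega$-spectra to $n\Omega$-spectra: by construction of the $G_n$-action the adjoint structure map of $(\alpha_n)^\ast X$ at $(V,W)$ is the structure map of $X$ at $(\mathds{k}^n \otimes V, \mathds{k}^n \otimes W)$, hence a weak equivalence whenever $X$ is an $\Omega$-spectrum; and since every object of $\es{J}_1$ admits a linear isometry into one of the form $\mathds{k}^n \otimes V$, the functor $(\alpha_n)^\ast$ also reflects stable equivalences between fibrant objects. Second, the derived unit $Y \to \mathds{R}(\alpha_n)^\ast\,\mathds{L}(\alpha_n)_! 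Y$ is a stable equivalence for cofibrant $Y$, checked on the generating cofibrant objects $\es{J}_n(V,-)$ and their suspensions. This last point is the genuinely substantial ingredient: it says precisely that $\alpha_n$ exhibits $\es{J}_n$ as a model for coordinate-free spectra of multiplicity $n$, and it rests on the identification of the sphere bundle $S\gamma_{n+1}$ with a homotopy colimit of Stiefel manifolds, the relevant version of \cite[Proposition 4.2]{We95}. This is exactly what is carried out in \cite[\S8]{BO13} and \cite[\S5]{TaggartReality}, which I would not reproduce; in the Reality case one simply carries the $C_2$-action along, at no extra cost, since every model structure in play is built from underlying weak equivalences. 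I expect this comparison to be the main obstacle, being the technical heart of the classification of homogeneous functors.

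Finally the cofree statement --- the one used in the sequel --- follows formally. The identity functors and the Kan extensions $(\alpha_n)_!$ agree on underlying categories, so the square
\[
\begin{tikzcd}[column sep=large]
\Fun_{G_n}(\es{J}_n, (C_2 \ltimes U(n))\T^\sf{free}) \ar[r, "{(\alpha_n)_!}"] \ar[d, "{\mathds{1}}"'] & \s^\mbf{R}[U(n)]^\sf{free} \ar[d, "{\mathds{1}}"] \\
\Fun_{G_n}(\es{J}_n, (C_2 \ltimes U(n))\T^\sf{cofree}) \ar[r, "{(\alpha_n)_!}"'] & \s^\mbf{R}[U(n)]^\sf{cofree}
\end{tikzcd}
\]
of left Quillen functors commutes, hence so does the square of left derived functors up to natural equivalence. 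The top horizontal arrow is a Quillen equivalence by \cite[Theorem 5.2]{TaggartReality}, the left vertical one by Proposition~\ref{prop: invariance of intermediates}, and the right vertical one by the preceding proposition comparing $\s^\mbf{R}[U(n)]^\sf{free}$ with $\s^\mbf{R}[U(n)]^\sf{cofree}$; by the two-out-of-three property for equivalences of $\infty$-categories, the bottom arrow induces an equivalence of underlying $\infty$-categories as well.
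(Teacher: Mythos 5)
Your proof is correct and takes essentially the same approach as the paper. Note that the paper gives no proof of this proposition at all---it is stated purely as a citation of \cite[Proposition 8.3]{BO13} (for $\es{C}=\T$, $G_n=O(n)$) and \cite[Theorem 5.2]{TaggartReality} (for $\es{C}=C_2\T^{\sf{free}}$, $G_n=C_2\ltimes U(n)$)---so your sketch reconstructs those cited Quillen-equivalence arguments (co-Yoneda identification $(\alpha_n)_!\es{J}_n(V,-)\cong\es{J}_1(\mathds{k}^n\otimes V,-)$, descent via Hirschhorn, cofinality of $\mathds{k}^n\otimes(-)$ in $\es{J}_1$) faithfully, and your two-out-of-three deduction of the cofree case is precisely the paper's own treatment in Theorem~\ref{thm: intermediate and spectra invariant}.
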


\subsection{A cofree model for homogeneous functors}

In the known versions of Weiss calculus the layers of the tower, i.e., the homotopy fibres 
\[
D_nF = \hofibre(T_nF \longrightarrow T_{n-1}F)
\]
have two useful properties: firstly they are polynomial of degree less than or equal $n$, and secondly, their $(n-1)$-polynomial approximations are trivial. 

\begin{definition}
Let $n$ be a non-negative integer. A continuous functor $F: \es{J} \to \es{C}$ is \emph{homogeneous of degree $n$} or equivalently \emph{$n$-homogeneous} if it is $n$-polynomial and $T_{n-1}F$ is levelwise equivalent to the terminal object of $\es{C}$. This last property is referred to as being \emph{$n$-reduced}.
\end{definition}

There is a model structure which captures the homotopy theory of $n$-homogeneous functors. The weak equivalences are the \emph{$D_n$-equivalences}, i.e., those maps $f: E \to F$ such that $D_n(f) : D_n(E) \to D_n(F)$ are levelwise weak equivalences.

\begin{lem}[{\cite[Proposition 6.9]{BO13}},{\cite[Proposition 3.2]{TaggartReality}}]\label{lem: homogeneous model structure exists}
Let $n$ be a non-negative integer. There is a model category structure on the category of continuous functors $\Fun(\es{J}, \es{C})$ such that the fibrations are the fibrations of the $n$-polynomial model structure, and the weak equivalences are the $D_n$-equivalences. The fibrant objects are the $n$-polynomial functors, and the cofibrant objects are the projectively cofibrant $n$-reduced functors. We call this the $n$-homogeneous model structure and denote it by $\homog{n}(\es{J}, \es{C})$. 
\end{lem}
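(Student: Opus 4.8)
The plan is to realise the $n$-homogeneous model structure as a \emph{right} Bousfield localisation (colocalisation) of the $n$-polynomial model structure $\poly{n}(\es{J},\es{C})$. Since that model structure is cellular and right proper, Hirschhorn's existence theorem for right Bousfield localisations at a set of objects \cite[Theorem 5.1.1]{Hi03} produces, for any set $K$ of objects, a model structure $R_K\poly{n}(\es{J},\es{C})$ whose fibrations --- and hence fibrant objects --- coincide with those of $\poly{n}(\es{J},\es{C})$, namely the levelwise fibrant $n$-polynomial functors, whose weak equivalences are the $K$-colocal equivalences, and whose cofibrant objects are the $K$-cellular objects. The content is then to choose $K$ so that the $K$-colocal equivalences become the $D_n$-equivalences and the $K$-cellular objects become the projectively cofibrant $n$-reduced functors. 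The natural choice is a set $K$ of projectively cofibrant $n$-reduced functors which is ``large enough'': one may take cofibrant replacements (in the projective model structure) of the functors $D_n\es{J}(V,-)$ as $V$ ranges over a skeleton of $\es{J}$ --- these are $n$-reduced because $D_n$ always lands among $n$-homogeneous functors and $n$-reducedness is invariant under levelwise equivalence --- or, what amounts after the identifications below to the same homotopy-theoretic data, the $n$-homogeneous functors corresponding under the classification of $n$-homogeneous functors by spectra with a $G_n$-action to the free $G_n$-spectra on the generating cells.

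I would then check the two identifications. For the weak equivalences, the key point is that maps out of an $n$-reduced functor into an $n$-polynomial functor only see its $n$-homogeneous part. Concretely, if $A$ is a projectively cofibrant $n$-reduced functor and $B$ is a levelwise fibrant $(n-1)$-polynomial functor, then $\map(A,B)\simeq\ast$: as $B$ is fibrant and local in $\poly{n-1}(\es{J},\es{C})$, the homotopy function complex $\map(A,B)$ is computed in that localisation, where $A$ is weakly equivalent to its fibrant replacement $T_{n-1}A$, which is levelwise equivalent to the terminal object, and $\map(\ast,B)=\ast$ in the pointed setting. Applying $\map(A,-)$ to the homotopy fibre sequence $D_nZ\to Z\to T_{n-1}Z$ for $Z$ a levelwise fibrant $n$-polynomial functor (so $T_nZ\simeq Z$ and $T_{n-1}Z$ is $(n-1)$-polynomial) therefore yields $\map(A,D_nZ)\xrightarrow{\ \simeq\ }\map(A,Z)$. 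Hence, taking $Z$ to be the $\poly{n}(\es{J},\es{C})$-fibrant replacement of $X$ and of $Y$, a map $f\colon X\to Y$ is a $K$-colocal equivalence precisely when the map $\map(A,D_nX)\to\map(A,D_nY)$ induced by $D_nf$ is a weak equivalence for every $A\in K$; since $D_nX$ and $D_nY$ are $n$-homogeneous and $K$ is chosen large enough to detect levelwise equivalences of $n$-homogeneous functors, this is precisely the condition that $D_nf$ be a levelwise weak equivalence, i.e.\ that $f$ be a $D_n$-equivalence. For the cofibrant objects, the $K$-cellular objects are built from the cells of $K$ by pushouts along generating projective cofibrations and transfinite composites, hence are projectively cofibrant, and they are $n$-reduced because being $n$-reduced means being weakly equivalent to the terminal object in $\poly{n-1}(\es{J},\es{C})$ and this is preserved by the homotopy colimits occurring in cell attachments --- these are computed in the same way in $\poly{n-1}(\es{J},\es{C})$, where the terminal object is also initial. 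The converse inclusion, that every projectively cofibrant $n$-reduced functor is $K$-cellular (equivalently $K$-colocal), again uses that $K$ generates the homotopy theory of $n$-homogeneous functors.

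The step I expect to be the main obstacle is the identification of the weak equivalences, and specifically the assertion that the chosen $K$ detects \emph{all} $D_n$-equivalences: this is not a formal consequence of Hirschhorn's machinery but relies on understanding $n$-homogeneous functors well enough, in practice through the $n$-th derivative, the $n$-th intermediate category, and the classification of $n$-homogeneous functors by spectra with a $G_n$-action, so this lemma is genuinely downstream of that classification rather than a soft corollary of general localisation theory. A secondary technical point is the interaction of $T_{n-1}$ --- equivalently of $n$-reducedness --- with the transfinite pushout constructions used to assemble $K$-cell complexes, which is dealt with by the left Bousfield localisation argument above showing those homotopy colimits agree with the ones computed in $\poly{n-1}(\es{J},\es{C})$.
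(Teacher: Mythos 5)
Your proposal is correct and follows essentially the same route as the cited sources \cite[Proposition~6.9]{BO13} and \cite[Proposition~3.2]{TaggartReality}: both construct $\homog{n}(\es{J},\es{C})$ as a right Bousfield localisation of $\poly{n}(\es{J},\es{C})$ at a set of projectively cofibrant $n$-reduced objects (the cited references use cofibrant replacements of the quotients $\es{J}_n(V,-)/G_n$, which are levelwise equivalent to your $D_n\es{J}(V,-)$ by Weiss's identification of $T_n$ of representables, so the two generating sets carry the same homotopical information), with the existence of the localisation supplied by \cite[Theorem~5.1.1]{Hi03} since the $n$-polynomial model structure is cellular and right proper. You also correctly flag the real content — identifying the $K$-colocal equivalences as the $D_n$-equivalences — as resting on the detection of equivalences of $n$-homogeneous functors via the derivative/intermediate-category machinery rather than on soft localisation formalism alone, which is exactly how the cited proofs proceed.
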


For calculus with Reality, the adjunction between free and cofree $C_2$-spaces induces a Quillen equivalence between the $n$-homogeneous model structures.

\begin{prop}\label{prop: homogeneous invariant under QE}
Let $n$ be a non-negative integer. The adjoint pair
\[
\adjunction{\mathds{1}}{\homog{n}(\es{J}, C_2\T^\sf{free})}{\homog{n}(\es{J}, C_2\T^\sf{cofree})}{\mathds{1}}
\]
is a Quillen equivalence.
\end{prop}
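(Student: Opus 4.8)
The plan is to follow the same template established in the proofs of Proposition~\ref{prop: input invariant under QE}, Proposition~\ref{prop: polynomial invariant under QE}, and Proposition~\ref{prop: invariance of intermediates}: show that the identity-identity adjunction is a Quillen equivalence between the two $n$-homogeneous model structures, and then pass to underlying $\infty$-categories. First I would recall from Lemma~\ref{lem: homogeneous model structure exists} that the $n$-homogeneous model structure has the same fibrations as the $n$-polynomial model structure and that its weak equivalences are the $D_n$-equivalences; in particular, both the free and cofree $n$-homogeneous model structures are obtained from the corresponding $n$-polynomial model structures by the same (right Bousfield-type / mixed) construction with the same cofibrations (projective) and the fibrant objects being the $n$-polynomial functors. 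Since by Proposition~\ref{prop: polynomial invariant under QE} the identity functor $\mathds{1}: \poly{n}(\es{J}, C_2\T^\sf{free}) \to \poly{n}(\es{J}, C_2\T^\sf{cofree})$ is already a left Quillen functor, and the $n$-homogeneous structures are built from these by the same recipe, the identity remains left Quillen at the level of $n$-homogeneous structures; concretely the (acyclic) cofibrations on the source map to (acyclic) cofibrations on the target because both the projective cofibrations and the fibrant objects are preserved.

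The key point is then that the adjunction is not merely Quillen but a Quillen \emph{equivalence}. For this it suffices, since the weak equivalences in the free and cofree $n$-homogeneous structures are the respective $D_n$-equivalences and the $D_n$-construction is defined from $T_n$ and $T_{n-1}$, to observe that the derived functors $T_n$ and $T_{n-1}$ are identified under the equivalence of Proposition~\ref{prop: polynomial invariant under QE}: a map is a $D_n$-equivalence in the free model if and only if it is one in the cofree model, because $D_n$ is computed by the same homotopy fibre of $T_nF \to T_{n-1}F$ and the underlying weak equivalences agree in $C_2\T^\sf{free}$ and $C_2\T^\sf{cofree}$. Equivalently, one invokes \cite[Theorem 3.3.20]{Hi03} once more, noting that the $n$-homogeneous model structure can be presented via the same localizing/colocalizing data on both sides and that this data is preserved by the (derived) identity functor, with source and target of the relevant maps remaining (co)fibrant. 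Then the derived (co)units are $D_n$-equivalences precisely because the underlying derived (co)units are already underlying weak equivalences by Proposition~\ref{prop: free=cofree} (or Proposition~\ref{prop: input invariant under QE}), and underlying weak equivalences are in particular $D_n$-equivalences.

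The main obstacle I anticipate is bookkeeping rather than conceptual: the $n$-homogeneous model structure is not a straightforward left Bousfield localization of a projective model structure (its weak equivalences are the $D_n$-equivalences, which form a larger class than the $n$-polynomial equivalences), so one must be a little careful about which transfer/localization theorem applies and about whether \cite[Theorem 3.3.20]{Hi03} is literally the right citation or whether one should instead argue directly that the identity adjunction reflects and preserves $D_n$-equivalences and hence is a Quillen equivalence by the two-out-of-three characterisation of Quillen equivalences between model structures with a common class of cofibrations. In the write-up I would therefore phrase the argument so that it reduces cleanly to the already-established fact (Proposition~\ref{prop: polynomial invariant under QE}) that $T_n$ and $T_{n-1}$ agree under the identity adjunction, from which the statement for homogeneous functors is immediate, and finally invoke the general principle from the Conventions section that a Quillen equivalence induces an equivalence of underlying $\infty$-categories.
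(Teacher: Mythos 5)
Your proposal follows essentially the same route as the paper: both reduce the statement, via Hirschhorn's \cite[Theorem 3.3.20(2)]{Hi03} on right Bousfield localizations, to showing that the class of $D_n$-equivalences is the same for the free and cofree models, and then invoke the Conventions to pass to underlying $\infty$-categories. You are correct to flag that the $n$-homogeneous structure is a \emph{right} Bousfield localization, so it is part (2) of Hirschhorn's theorem that applies (the paper cites this explicitly), and the relevant data is a class of colocal equivalences rather than a set of localizing maps.

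The one place where your write-up is materially looser than the paper's is the converse half of the equivalence of $D_n$-equivalence classes. The forward direction (free $D_n$-equivalence implies cofree $D_n$-equivalence) is immediate since the identity $C_2\T^\sf{free}\to C_2\T^\sf{cofree}$ preserves all weak equivalences. But for the converse you assert that the two notions agree ``because $D_n$ is computed by the same homotopy fibre of $T_nF\to T_{n-1}F$ and the underlying weak equivalences agree'' --- this is the right intuition, but it conceals the issue that the point-set construction of $T_n$ (hence $D_n$) involves fibrant replacement, which is taken relative to a model structure. In the free projective structure every object is already fibrant, whereas in the cofree projective structure fibrant replacement is genuine, so one must check that the cofree fibrant replacement of a functor $F$ is also a fibrant replacement in the free projective structure (equivalently, that the replacement map is a free weak equivalence and lands in a free-fibrant object). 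The paper's proof carries this out via a two-out-of-three argument on the square comparing $D_nX\to D_nY$ with its cofree fibrant replacement. Your proposal would be complete if you supplied this comparison rather than asserting the agreement of $D_n$-equivalences outright; as written there is a small but real gap at exactly this point.
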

\begin{proof}
By~\cite[Theorem 3.3.20(2)]{Hi03}, it suffices to show that the class of colocal equivalences for the cofree model agrees with the class of colocal equivalences for the free model. Let $X \to Y$ be a $D_n$-equivalence in the free model for calculus with Reality, i.e., $D_nX \to D_nY$, is a levelwise weak equivalence in $C_2\T^\sf{free}$. Since the identity functor 
\[
\mathds{1} : C_2\T^\sf{free} \to C_2\T^\sf{cofree}
\]
preserves all weak equivalences, it follows that $D_nX \to D_nY$ is a levelwise weak equivalence in $C_2\T^\sf{cofree}$. 

For the converse, let $X \to Y$ be a $D_n$-equivalence in the cofree model for calculus with Reality. It suffices to show that given a functor $F$, the projective fibrant replacement functor $F \mapsto \fibrep_2(F)$ in $\Fun(\es{J}, C_2\T^\sf{cofree})$ is a fibrant replacement in $\Fun(\es{J}, C_2\T^\sf{free})$ since then we will have a commutative diagram
\[\begin{tikzcd}
	{D_nX} & {D_nY} \\
	{\fibrep_2D_nX} & {\fibrep_2D_nY}
	\arrow[from=1-1, to=1-2]
	\arrow[from=1-2, to=2-2]
	\arrow[from=1-1, to=2-1]
	\arrow[from=2-1, to=2-2]
\end{tikzcd}\]
in which two-out-of-three maps are weak equivalences in free projective model structure, hence the top horizontal map must also be a weak equivalence in the free projective model structure. For any functor $F$, we can cofibrantly replace the map $F \to \fibrep_2(F)$ in the free model structure, obtaining a diagram 
\[\begin{tikzcd}
	{\cofrep_1(F)} & X \\
	{\cofrep_1\fibrep_2(F)} & {\fibrep_2(X)}
	\arrow[from=1-1, to=1-2]
	\arrow[from=1-2, to=2-2]
	\arrow[from=1-1, to=2-1]
	\arrow[from=2-1, to=2-2]
\end{tikzcd}\]
in which the lower horizontal map is the derived counit of the Quillen equivalence between the free and cofree projective model structures, and $\sf{Q}_1$ denotes cofibrant replacement in the free model structure. It follows that three-out-of-four of the maps in the above square are weak equivalences in free projective model structure, hence so to is the right-hand vertical map, and the result follows. 
\end{proof}

\subsection{A cofree model for differentiation}\label{subsection: cofree differentiation}
For $m \leq n$, the inclusion $\mathds{k}^m \to \mathds{k}^n$ onto the first $m$-coordinates induces an inclusion of categories, $i_m^n \colon \es{J}_m \to \es{J}_n$. Define the \textit{restriction functor} 
\[
\res_0^n : \Fun(\es{J}_n, \es{C}) \longrightarrow \Fun(\es{J}_m, \es{C})
\]
to be precomposition with $i_m^n$, and define the \textit{induction functor} 
\[
\ind_m^n \colon \Fun(\es{J}_m, \es{C}) \longrightarrow \Fun(\es{J}_n, \es{C})
\]
to be the enriched right Kan extension along $i_m^n$. In the case $m=0$, the induction functor $\ind_0^n$ is called the \textit{$n$-th derivative}. Combining this oribits-trivial adjunction for $G_n$, see e.g.,~\cite[V.2]{MM02}, provides an adjunction
\[
\adjunction{\res_0^n/\Aut(\mathds{k}^n)}{\Fun_{G_n}(\es{J}_n, \es{C}[G_n])}{\Fun(\es{J}, \es{C})}{\ind_0^n \varepsilon^*}.
\]

\begin{thm}[{\cite[Theorem 10.1]{BO13}}, {\cite[Theorem 6.8]{TaggartReality}}]
Let $n$ be a non-negative integer. The adjoint pair
\[
\adjunction{\res_0^n/\Aut(\mathds{k}^n)}{\Fun_{G_n}(\es{J}_n, \es{C}[G_n])}{\homog{n}(\es{J}, \es{C})}{\ind_0^n\varepsilon^\ast}
\]
is a Quillen equivalence.
\end{thm}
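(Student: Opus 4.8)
The plan is to reduce to instances already in the literature and transport along the free-to-cofree comparisons established earlier in this section. When $\es{C} = \T$ the assertion is \cite[Theorem 10.1]{BO13}, and when $\es{C} = C_2\T$ is given the \emph{free} model structure it is \cite[Theorem 6.8]{TaggartReality}; so the only genuinely new content is the case in which $C_2\T$ carries the \emph{cofree} model structure, and the argument below addresses that case.

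The key observation is that $\res_0^n$, the residual $\Aut(\mathds{k}^n)$-quotient, the inflation $\varepsilon^\ast$ and the right Kan extension $\ind_0^n$ are all manufactured from the enriched-categorical data of $\es{J}_n$, $\es{J}$ and $\es{C}$ alone, with no reference to a model structure on $\es{C}$. The free and cofree structures live on the \emph{same} underlying categories, and the comparison functors are literally identities; consequently there is a square of adjunctions --- horizontal legs the pairs $(\res_0^n/\Aut(\mathds{k}^n), \ind_0^n\varepsilon^\ast)$ for the free and the cofree $n$-homogeneous structures, vertical legs the identity adjunctions relating the free and cofree $n$-th intermediate categories (on the left) and the free and cofree $n$-homogeneous categories (on the right) --- which, on underlying categories, has both composites equal to $\res_0^n/\Aut(\mathds{k}^n)$, hence commutes on the nose. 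Passing to underlying $\infty$-categories yields a square commuting up to natural equivalence.

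I would then feed in the three edges already under control: the left vertical leg induces an equivalence of underlying $\infty$-categories by Proposition~\ref{prop: invariance of intermediates}, the right vertical leg by Proposition~\ref{prop: homogeneous invariant under QE}, and the top horizontal leg by \cite[Theorem 6.8]{TaggartReality}. Two-out-of-three for equivalences of $\infty$-categories, applied to the commuting square, then forces the bottom horizontal leg --- the cofree differentiation adjunction --- to induce an equivalence as well, which is the cofree case of the theorem; naturality of the unit and counit along the vertical identities identifies this equivalence with the stated one.

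The one step genuinely requiring work --- and the main obstacle --- is verifying that $(\res_0^n/\Aut(\mathds{k}^n), \ind_0^n\varepsilon^\ast)$ really is a Quillen adjunction for the cofree $n$-homogeneous model structure, so that the phrase ``induces an equivalence of underlying $\infty$-categories'' applies to the bottom leg in the first place. Concretely, one must check that $\res_0^n/\Aut(\mathds{k}^n)$ carries the generating (acyclic) cofibrations of the cofree $n$-stable structure on $\Fun_{G_n}(\es{J}_n, \es{C}[G_n])$, which now involve genuine (non-free) $C_2$-cells smashed with $\es{J}_n(V\oplus W, -)\wedge S^{\mathds{k}^n\otimes W}$, into (acyclic) cofibrations of the cofree $n$-homogeneous structure; I expect this to go through by the same cell-by-cell analysis as in the free case, since the $\Aut(\mathds{k}^n)$-quotient sees only the $\Aut(\mathds{k}^n)$-action, which is recorded independently of the $C_2$-action. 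Failing a clean argument here, one may instead rerun the proof of \cite[Theorem 6.8]{TaggartReality} essentially verbatim, since each of its inputs --- Propositions~\ref{prop: input invariant under QE}, \ref{prop: polynomial invariant under QE} and \ref{prop: invariance of intermediates} together with the classification of the $n$-th intermediate category --- has already been supplied with a cofree counterpart.
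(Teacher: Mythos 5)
The paper gives no proof of this statement as such: it is a citation theorem, with \cite[Theorem 10.1]{BO13} covering $\es{C}=\T$ and \cite[Theorem 6.8]{TaggartReality} covering $\es{C}=C_2\T$ with the free model structure. Your reading of what would remain to prove --- the cofree case --- is handled by the paper in the adjacent subsection as Theorem~\ref{thm: differentiation invariant under QE}, and your argument is essentially that proof: use the commutative square of Quillen adjunctions relating the free and cofree $n$-homogeneous and $n$-stable model structures (the paper's Diagram~\eqref{eqn: homog and intermediates}), invoke two-out-of-three for Quillen equivalences, and supply the separate --- and, as both you and the paper's footnote emphasize, genuinely necessary --- verification that the cofree restriction--induction pair is a Quillen adjunction in the first place. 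The one small divergence is in that last verification step. You propose working on the left adjoint and checking it carries generating (acyclic) cofibrations of the cofree $n$-stable structure to (acyclic) cofibrations in the $n$-homogeneous structure; your description of the generating data is a little loose (the maps $\es{J}_n(V\oplus W,-)\wedge S^{\mathds{k}^n\otimes W}\to\es{J}_n(V,-)$ are localizing maps, not cofibrations), and since the $n$-homogeneous structure is a \emph{right} Bousfield localization, a cell-by-cell argument on the left adjoint is less clean than you suggest. The paper instead works on the right adjoint, following \cite[Lemmas 9.1, 9.2 and 9.4]{BO13}: it shows that $\ind_0^n\varepsilon^\ast$ sends fibrant $n$-homogeneous functors to $n\Omega$-spectra (using \cite[Corollary 5.12]{We95}), and that it takes weak equivalences between levelwise fibrant $n$-polynomial functors to $n$-stable equivalences (using the right Bousfield localization structure from Lemma~\ref{lem: homogeneous model structure exists}). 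Your fallback of rerunning \cite[Theorem 6.8]{TaggartReality} verbatim would land in the same place, so there is no genuine gap, just a less efficient route sketched for the Quillen-adjunction check.
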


\subsection{The equivalence of calculi}\label{subsection: equiv of calc}

Consider the commutative diagram
\begin{equation}\label{eqn: homog and intermediates}
\begin{tikzcd}
	{\homog{n}(\es{J}, C_2\T^\sf{free})} & {} & {\Fun_{G_n}(\es{J}_n, (C_2 \ltimes U(n))\T^\sf{free})} \\
	{\homog{n}(\es{J}, C_2\T^\sf{cofree})} & {} & {\Fun_{G_n}(\es{J}_n, (C_2 \ltimes U(n))\T^\sf{cofree})}
	\arrow["{\mathds{1}}"', shift right=1, from=1-3, to=2-3]
	\arrow["{\mathds{1}}"', shift right=1, from=2-3, to=1-3]
	\arrow["{\mathds{1}}"', shift right=1, from=1-1, to=2-1]
	\arrow["{\mathds{1}}"', shift right=1, from=2-1, to=1-1]
	\arrow["{\res_0^n/U(n)}"', shift right=1, from=1-3, to=1-1]
	\arrow["{\res_0^n/U(n)}"', shift right=1, from=2-3, to=2-1]
	\arrow["{\ind_0^n\varepsilon^\ast}"', shift right=1, from=1-1, to=1-3]
	\arrow["{\ind_0^n\varepsilon^\ast}"', shift right=1, from=2-1, to=2-3]
\end{tikzcd}
\end{equation}

In Weiss calculus the largest task in setting up the theory is demonstrating that the restriction-induction adjunction is an equivalence between the underlying $\infty$-categories of the $n$-homogeneous model structure and the $n$-stable model structure. The diagram~\eqref{eqn: homog and intermediates}, implies this result for cofree calculus with Reality if it is true for free calculus with Reality.

\begin{thm}\label{thm: differentiation invariant under QE}
Let $n$ be a non-negative integer. The adjunction
\[
\adjunction{\res_0^n/U(n)}{\homog{n}(\es{J}, C_2\T^\sf{cofree})}{\Fun_{G_n}(\es{J}_n, (C_2 \ltimes U(n))\T^\sf{cofree})}{\ind_0^n\varepsilon^\ast}
\]
is a Quillen equivalence.
\end{thm}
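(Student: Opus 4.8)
The plan is to read the statement off the commutative diagram~\eqref{eqn: homog and intermediates} by a two-out-of-three argument for equivalences of $\infty$-categories, with no new direct analysis of the cofree model structures needed. Orient the two horizontal adjunctions in~\eqref{eqn: homog and intermediates} with left adjoint $\res_0^n/U(n)\colon \Fun_{G_n}(\es{J}_n,-)\to\homog{n}(\es{J},-)$ and right adjoint $\ind_0^n\varepsilon^\ast$, and orient the two vertical adjunctions with left adjoint the identity functor from the free to the cofree model. Three of the four sides of the resulting square of left Quillen functors are already known to be Quillen equivalences: the top horizontal one by~\cite[Theorem 6.8]{TaggartReality} (the free-model case of the statement being proved), the left-hand vertical one by Proposition~\ref{prop: homogeneous invariant under QE}, and the right-hand vertical one by Proposition~\ref{prop: invariance of intermediates}. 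It remains to transport the equivalence to the bottom horizontal adjunction, which concerns the cofree models.

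First I would note that~\eqref{eqn: homog and intermediates} commutes strictly: the vertical identity functors are literally the identity on the common underlying categories $\Fun_{G_n}(\es{J}_n,(C_2\ltimes U(n))\T)$ and $\Fun(\es{J},C_2\T)$, so they commute on the nose with the precomposition underlying $\res_0^n/U(n)$ and with the pointwise right Kan extension underlying $\ind_0^n\varepsilon^\ast$, both of which are computed entirely in those underlying categories. Secondly, with the chosen orientation all four functors in~\eqref{eqn: homog and intermediates} are left Quillen, so left derived functors compose and, as recorded in the Conventions, passing to underlying $\infty$-categories turns~\eqref{eqn: homog and intermediates} into a square of $\infty$-categories and left adjoints that commutes up to natural equivalence. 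Thirdly, since the top and the two vertical sides of this $\infty$-categorical square are equivalences, solving the commuting square for the remaining side shows that $\mathds{L}(\res_0^n/U(n))$ on the cofree models is an equivalence of $\infty$-categories as well. Finally, an adjunction of $\infty$-categories whose left adjoint is an equivalence is automatically an adjoint equivalence, so the adjunction $\mathds{L}(\res_0^n/U(n))\dashv\mathds{R}(\ind_0^n\varepsilon^\ast)$ between $\homog{n}(\es{J},C_2\T^\sf{cofree})_\infty$ and $\Fun_{G_n}(\es{J}_n,(C_2\ltimes U(n))\T^\sf{cofree})_\infty$ is an equivalence, which is the assertion of the theorem.

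Almost all of the mathematical content has already been invested in constructing the cofree models and in the cited free-model theorem, so the only point requiring genuine (if routine) attention — and the step I expect to be the real obstacle — is the input to the argument: that $\res_0^n/U(n)\dashv\ind_0^n\varepsilon^\ast$ really is a Quillen adjunction between the cofree $n$-stable and cofree $n$-homogeneous model structures, together with the compatibility of the commutativity of~\eqref{eqn: homog and intermediates} with the relevant unit and counit maps so that the induced square of $\infty$-categories genuinely commutes. This is handled by rerunning the free-model argument of~\cite[Theorem 6.8]{TaggartReality} essentially verbatim, using that the $n$-homogeneous fibrations are the $n$-polynomial fibrations and that the behaviour of the (co)Kan extensions involved is unaffected by the left Bousfield localization at $(EC_2)_+\to\ast$; symmetrically, one may instead work throughout with the square of right Quillen functors $\ind_0^n\varepsilon^\ast$ and the cofree-to-free identity functors.
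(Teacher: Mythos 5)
Your proposal takes essentially the same route as the paper: both pass through the commuting square~\eqref{eqn: homog and intermediates}, reduce by a two-out-of-three argument (you at the level of underlying $\infty$-categories, the paper directly at the level of Quillen equivalences) to the single remaining input that the cofree adjunction $\res_0^n/U(n) \dashv \ind_0^n\varepsilon^\ast$ is a Quillen adjunction, and then discharge that step by appeal to the standard techniques used in the free case. The paper is slightly more explicit about what that last verification amounts to — that $\ind_0^n\varepsilon^\ast$ carries $n$-homogeneous fibrant objects to $n\Omega$-spectra, via \cite[Corollary 5.12]{We95}, and that it sends weak equivalences between levelwise fibrant $n$-polynomial functors to stable equivalences, via the right Bousfield localization description of $\homog{n}$ in Lemma~\ref{lem: homogeneous model structure exists} — but your pointer to rerunning the free-model proof lands in the same place.
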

\begin{proof}
Diagram \eqref{eqn: homog and intermediates} is a diagram in which three-out-of-four adjoint pairs are Quillen equivalences. By the two-out-of-three property for Quillen equivalences, the result will follow if the lower horizontal adjunction is a Quillen adjunction\footnote{Note that Quillen adjunctions do not satisfy a two-out-of-three property, see e.g.,~\cite[Example 4.1.10]{BalchinMCat} hence we cannot use the same tactic to prove that the lower horizontal adjunction is a Quillen adjunction}. The required Quillen adjunction follows readily from the standard techniques, see e.g.,~\cite[Lemmas 9.1, 9.2 and 9.4]{BO13}.  In particular, it suffices to show that if $F$ is fibrant in the $n$-homogeneous model structure then $\ind_0^n\varepsilon^\ast(F)$ is an $n\Omega$-spectrum in the cofree $n$-th intermediate category, and that the right adjoint applied to a weak equivalence between levelwise fibrant $n$-polynomial functors is a weak equivalence in the $n$-stable model structure. The latter is a direct consequence of the formation of the $n$-homogeneous model structure as a right Bousfield localization of the $n$-polynomial model structure, see Lemma~\ref{lem: homogeneous model structure exists}. The former follows from~\cite[Corollary 5.12]{We95}.
\end{proof}

The other key aspect of Weiss calculus is the equivalence between the intermediate category and some category of spectra with a group action. This equivalence allows us to view the derivative in terms of a single spectrum and forms one part of the classification theorem for homogeneous functors. In calculus with Reality this is a two-step process. Similar analysis as for Diagram~\eqref{eqn: homog and intermediates} applied to the diagram

\begin{equation}\label{eqn: intermediates and spectra}
\begin{tikzcd}
	{\Fun_{G_n}(\es{J}_n, (C_2\ltimes U(n))\T^\sf{free})} & {} & {\s^\mbf{R}[U(n)]^\sf{free}} \\
	{\Fun_{G_n}(\es{J}_n, (C_2 \ltimes U(n))\T^\sf{cofree})} & {} & {\s^\mbf{R}[U(n)]^\sf{cofree}}
	\arrow["{\mathds{1}}"', shift right=1, from=1-1, to=2-1]
	\arrow["{\mathds{1}}"', shift right=1, from=2-1, to=1-1]
	\arrow["{(\alpha_n^\mbf{R})_!}", shift left=1, from=2-1, to=2-3]
	\arrow["{(\alpha_n^\mbf{R})^\ast}", shift left=1, from=2-3, to=2-1]
	\arrow["{(\alpha_n^\mbf{R})^\ast}", shift left=1, from=1-3, to=1-1]
	\arrow["{(\alpha_n^\mbf{R})_!}", shift left=1, from=1-1, to=1-3]
	\arrow["{\mathds{1}}"', shift right=1, from=1-3, to=2-3]
	\arrow["{\mathds{1}}"', shift right=1, from=2-3, to=1-3]
\end{tikzcd}
\end{equation}
implies that the classification of homogeneous functors in terms of \emph{real} spectra holds in the cofree model if and only if it holds in the free model.

\begin{thm}\label{thm: intermediate and spectra invariant}
Let $n$ be a non-negative integer. The adjunction
\[
\adjunction{(\alpha_n^\mbf{R})_!}{\Fun_{U(n)}(\es{J}_n, (C_2 \ltimes U(n))\T^\sf{cofree})}{\s^\mbf{R}[U(n)]^\sf{cofree}}{(\alpha_n^\mbf{R})^\ast}
\]
is a Quillen equivalence.
\end{thm}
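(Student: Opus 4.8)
The plan is to follow the template of the proof of Theorem~\ref{thm: differentiation invariant under QE}: I would show that the lower horizontal adjunction of diagram~\eqref{eqn: intermediates and spectra} is a Quillen equivalence, and then invoke the fact that a Quillen equivalence of topological model categories induces an equivalence of underlying $\infty$-categories. In diagram~\eqref{eqn: intermediates and spectra} the two vertical identity adjunctions are Quillen equivalences — the left one by Proposition~\ref{prop: invariance of intermediates}, the right one by the analogous statement for real spectra with a $U(n)$-action, itself reduced to the $n=1$ case of Proposition~\ref{prop: invariance of intermediates} in~\cite[Proposition 5.4]{TaggartReality} — and the upper horizontal adjunction $(\alpha_n^\mbf{R})_! \dashv (\alpha_n^\mbf{R})^\ast$ is a Quillen equivalence by~\cite[Proposition 8.3]{BO13} and~\cite[Theorem 5.2]{TaggartReality}. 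The square of left adjoints commutes strictly, since the identity functor leaves underlying objects unchanged and hence commutes with the left Kan extension $(\alpha_n^\mbf{R})_!$. Applying the two-out-of-three property for Quillen equivalences to this commuting square, the theorem would follow once the lower horizontal adjunction is known to be a Quillen adjunction; as the footnote to Theorem~\ref{thm: differentiation invariant under QE} warns, this point cannot be extracted from the other three adjunctions and has to be checked by hand.

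First I would verify that $(\alpha_n^\mbf{R})_! \dashv (\alpha_n^\mbf{R})^\ast$ is a Quillen adjunction between the cofree \emph{projective} model structures. Acyclic fibrations and fibrations in these projective structures are detected levelwise in $(C_2 \ltimes U(n))\T^\sf{cofree}$, while $(\alpha_n^\mbf{R})^\ast$ is computed levelwise, its value at $V \in \es{J}_n$ being $X(\mathds{k}^n \otimes V)$ equipped with the combined internal and external $G_n$-action. Since reindexing along $\alpha_n$ amounts to evaluation at $\mathds{k}^n \otimes V$, and adjoining the internal action is an automorphism of $(C_2 \ltimes U(n))\T^\sf{cofree}$, both operations preserve (acyclic) fibrations in the cofree model structure; hence $(\alpha_n^\mbf{R})^\ast$ is right Quillen for the projective structures, exactly as in the proofs of Propositions~\ref{prop: input invariant under QE} and~\ref{prop: invariance of intermediates}.

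Next I would lift to the stable localisations. The cofree $n$-stable model structure on the intermediate category and the cofree stable model structure on $\s^\mbf{R}[U(n)]$ are left Bousfield localisations of the respective cofree projective model structures, the former at the set
\[
\{\,\es{J}_n(V \oplus W, -) \wedge S^{\mathds{k}^n \otimes W} \to \es{J}_n(V, -) \mid V, W \in \es{J}_n \,\}
\]
and the latter at the corresponding set of spectrum structure maps. By~\cite[Theorem 3.3.20]{Hi03} it then suffices to check that the derived image under $(\alpha_n^\mbf{R})_!$ of the displayed set consists of stable equivalences in $\s^\mbf{R}[U(n)]^\sf{cofree}$; this is so because $(\alpha_n^\mbf{R})_!$, being left Kan extension along $\alpha_n\colon V \mapsto \mathds{k}^n \otimes V$, sends representables to representables and carries each of these maps to one of the maps at which the cofree stable model structure on $\s^\mbf{R}[U(n)]$ is localised, and since the sources and targets are projectively cofibrant the derived image is the honest image. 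This is essentially the tactic of the final paragraph of the proof of Proposition~\ref{prop: invariance of intermediates}.

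Granting the last two steps, $(\alpha_n^\mbf{R})_! \dashv (\alpha_n^\mbf{R})^\ast$ is a Quillen adjunction between the cofree stable model structures, and the two-out-of-three reduction upgrades it to a Quillen equivalence, which yields the desired equivalence of underlying $\infty$-categories. The hard part is the verification that the lower horizontal adjunction is a Quillen adjunction: this cannot be deduced formally from the surrounding Quillen equivalences (Quillen adjunctions do not satisfy two-out-of-three), and it requires care precisely because the fibrations of the cofree model structure are \emph{not} the underlying fibrations, so one must keep control of the fact that $(\alpha_n^\mbf{R})^\ast$ simultaneously reindexes along $\alpha_n$ and adjoins the internal $G_n$-action while still preserving cofree (acyclic) fibrations, and then match the two localising sets via Hirschhorn's criterion.
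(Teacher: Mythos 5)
Your proposal is correct and follows essentially the same approach as the paper: reduce via the two-out-of-three property for Quillen equivalences in Diagram~\eqref{eqn: intermediates and spectra} to the assertion that the lower horizontal adjunction is a Quillen adjunction, then verify that assertion. The paper simply cites~\cite[Proposition 8.3]{BO13} for this last point as a ``standard technique''; you supply the details of that verification (projective-level right-Quillen check, then Hirschhorn's criterion for the stable localisations), so your argument is an unpacking rather than a departure from the paper's proof.
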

\begin{proof}
The required Quillen adjunction follows readily from the standard techniques, see e.g.,~\cite[Proposition 8.3]{BO13}. The Quillen equivalence follows by two-out-of-three, using the Quillen equivalence of~\cite[Theorem 6.2]{TaggartReality}.
\end{proof}

\subsection{Calculus with Reality on free and cofree $C_2$-spaces}

In calculus with Reality, the classification of $n$-homogeneous functors has one extra step, and is given by the zigzag
\begin{equation}\label{eqn: Reality zigzag}
\begin{tikzcd}
	{\homog{n}(\es{J}_0^\mbf{R},C_2\T^\sf{free})} \\
	{\Fun_{C_2 \ltimes U(n)}(\es{J}_n^\mbf{R}, (C_2\ltimes U(n)\T^\sf{free})} \\
	{\s^\mbf{R}[U(n)]^\sf{free}} \\
	{\s^\mbf{O}[C_2 \ltimes U(n)]^\sf{free}}
	\arrow["{(\alpha_n^\mbf{R})^\ast}"', shift right=1, from=3-1, to=2-1]
	\arrow["{(\alpha_n^\mbf{R})_!}"', shift right=1, from=2-1, to=3-1]
	\arrow["\psi", shift left=2, from=3-1, to=4-1]
	\arrow["{L_\psi}", from=4-1, to=3-1]
	\arrow["{\res_0^n/U(n)}", shift left=1, from=2-1, to=1-1]
	\arrow["{\ind_0^n\varepsilon^\ast}", shift left=1, from=1-1, to=2-1]
\end{tikzcd}
\end{equation}
where $\s^\mbf{O}[C_2 \ltimes U(n)]$ is the category of orthogonal spectra with an action of $C_2 \ltimes U(n)$. The adjoint pair $(L_\psi, \psi)$ has right adjoint given by
\[
\psi(X)(V) = \Map_\ast(S^{iV}, X(V_\C))
\]
and left adjoint given by an appropriate enriched coend formula, see e.g.~\cite[Proposition 5.5]{TaggartReality}. Note that this last adjunction is a Quillen equivalence by~\cite[Theorem 5.8]{TaggartReality}. To replace the zigzag of free model structures by a zigzag of cofree model structures it suffices to show that the diagram
\begin{equation}\label{eqn: spectra diagram}
\begin{tikzcd}
	{\s^\mbf{R}[U(n)]^\sf{free}} & {\s^\mbf{R}[U(n)]^\sf{cofree}} \\
	{\s^\mbf{O}[C_2 \ltimes U(n)]^\sf{free}} & {\s^\mbf{O}[C_2 \ltimes U(n)]^\sf{cofree}}
	\arrow["{\mathds{1}}", shift left=1, from=1-1, to=1-2]
	\arrow["{\mathds{1}}", shift left=1, from=1-2, to=1-1]
	\arrow["{\mathds{1}}", shift left=1, from=2-1, to=2-2]
	\arrow["{\mathds{1}}", shift left=1, from=2-2, to=2-1]
	\arrow["{L_\psi}"', shift right=1, tail reversed, no head, from=1-1, to=2-1]
	\arrow["\psi", shift left=1, from=1-1, to=2-1]
	\arrow["\psi", shift left=1, from=1-2, to=2-2]
	\arrow["{L_\psi}", shift left=1, from=2-2, to=1-2]
\end{tikzcd}
\end{equation}
is a commutative diagram of Quillen equivalences. This will follow from the following result.

\begin{prop}
Let $n$ be a non-negative integer. The adjoint pair
\[
\adjunction{L_\psi}{\s^\mbf{O}[C_2 \ltimes U(n)]^\sf{cofree}}{\s^\mbf{R}[U(n)]^\sf{cofree}}{\psi}
\]
is a Quillen equivalence.
\end{prop}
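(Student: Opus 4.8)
The plan is to realise Diagram~\eqref{eqn: spectra diagram} as a commutative square of Quillen adjunctions three of whose four edges are already Quillen equivalences, and then to conclude by the two-out-of-three property for Quillen equivalences. Concretely: the top edge $\mathds{1}\colon \s^\mbf{R}[U(n)]^\sf{free}\rightleftarrows\s^\mbf{R}[U(n)]^\sf{cofree}$ is a Quillen equivalence by the Proposition above identifying the free and cofree models for real spectra with a $U(n)$-action; the left edge $(L_\psi,\psi)$ between the free models is a Quillen equivalence by~\cite[Theorem 5.8]{TaggartReality}; and the bottom edge $\mathds{1}\colon \s^\mbf{O}[C_2\ltimes U(n)]^\sf{free}\rightleftarrows\s^\mbf{O}[C_2\ltimes U(n)]^\sf{cofree}$ is a Quillen equivalence by the argument of Proposition~\ref{prop: invariance of intermediates} (the free/cofree distinction concerns only the model chosen for the relevant $C_2$-action, and the two associated stable model structures share the same stable equivalences). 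Since the square commutes on the nose --- all horizontal maps are the identity and $\psi$ is the same underlying functor in both rows --- the composite right adjoint $\psi^\sf{cofree}\circ\mathds{1}$ along the top-then-right path equals $\mathds{1}\circ\psi^\sf{free}$ along the left-then-bottom path, which is a composite of Quillen equivalences and hence a Quillen equivalence; as the top edge is also a Quillen equivalence, two-out-of-three for Quillen equivalences forces $(L_\psi,\psi)^\sf{cofree}$ to be a Quillen equivalence, \emph{provided} it is known to be a Quillen adjunction.

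The content of the proof is therefore this last point, exactly as in the proof of Theorem~\ref{thm: differentiation invariant under QE}: two-out-of-three for Quillen equivalences does not by itself supply the Quillen adjunction, so I would verify directly that $(L_\psi,\psi)^\sf{cofree}$ is a Quillen adjunction. By the standard criterion (cf.~\cite[Proposition 5.5]{TaggartReality} and the techniques of~\cite[Proposition 8.3]{BO13}) it suffices to show that $\psi$ sends fibrant objects of $\s^\mbf{R}[U(n)]^\sf{cofree}$ --- levelwise-cofree $\Omega$-spectra --- to fibrant objects of $\s^\mbf{O}[C_2\ltimes U(n)]^\sf{cofree}$, and sends weak equivalences between levelwise fibrant objects to stable equivalences. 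The key observation is that $\psi(X)(V)=\Map_\ast(S^{iV},X(V_\bb{C}))$ is obtained from the $C_2$-space $X(V_\bb{C})$, with its conjugation action, by cotensoring with the finite $C_2$-CW complex $S^{iV}$, and that under this construction the $C_2\subset C_2\ltimes U(n)$ acting on the orthogonal spectrum is identified with the conjugation $C_2$. Cofree $C_2$-spaces are closed under cotensoring with finite $C_2$-CW complexes and under the (co)limits defining the structure maps, so $\psi$ carries levelwise-cofree $\Omega$-spectra to objects that are again levelwise cofree and satisfy the $\Omega$-spectrum condition; and it preserves the relevant weak equivalences because on cofree objects $C_2$-fixed points agree with homotopy $C_2$-fixed points, which preserve underlying weak equivalences. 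The left adjoint $L_\psi$, given by the enriched coend of~\cite[Proposition 5.5]{TaggartReality}, then visibly carries the generating (acyclic) cofibrations of the cofree model to (acyclic) cofibrations, completing the verification that $(L_\psi,\psi)^\sf{cofree}$ is a Quillen adjunction.

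I expect the main obstacle to be precisely this Quillen-adjunction check, and within it the bookkeeping needed to confirm that $\psi$ lands in the cofree --- as opposed to merely the fine --- model structure on $\s^\mbf{O}[C_2\ltimes U(n)]$: that is, that cofreeness of the conjugation $C_2$-action on the values of a real spectrum is transported, by the formula for $\psi$, to cofreeness of the orthogonal $C_2$-action, together with the matching compatibility of $L_\psi$ with the cofibrations of the cofree model. Once this is in hand, the conclusion is formal: a Quillen equivalence induces an equivalence of underlying $\infty$-categories by the conventions recalled in the introduction, and the two-out-of-three argument promotes the cofree $(L_\psi,\psi)$ Quillen adjunction to a Quillen equivalence; as a by-product this also shows that Diagram~\eqref{eqn: spectra diagram} is a commutative square of Quillen equivalences, which is what is needed to replace the free zigzag~\eqref{eqn: Reality zigzag} by its cofree counterpart.
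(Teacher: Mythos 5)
Your proposal is correct and follows essentially the same route as the paper: set up Diagram~\eqref{eqn: spectra diagram} as a square of Quillen adjunctions with three of four edges Quillen equivalences, reduce by two-out-of-three to the single check that the cofree $(L_\psi,\psi)$ is a Quillen adjunction, and verify that by showing $\psi$ sends levelwise-cofree real $\Omega$-spectra to levelwise-cofree orthogonal $\Omega$-spectra. The only small divergence is bookkeeping: the paper reduces the Quillen-adjunction check to preservation of acyclic fibrations (automatic, as these are levelwise) plus preservation of fibrant objects, whereas you additionally invoke preservation of weak equivalences between fibrant objects and cofibration-preservation for $L_\psi$; these extra steps are harmless but redundant once the fibrant-object check is in hand, and the key observation --- cofreeness survives the cotensor $\Map_\ast(S^{iV},-)$ --- is exactly the paper's ``interaction of loops and mapping spaces.''
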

\begin{proof}
Since Diagram~\eqref{eqn: spectra diagram} clearly commutes on the level of $(1-)$categories, it suffices to exhibit that the adjunction in question is a Quillen adjunction, then the diagram will be a diagram of Quillen adjunctions in which three-out-of-four are Quillen equivalences, hence the fourth adjoint pair must also be a Quillen equivalence. To see that the adjunction is a Quillen adjunction note that by construction the right adjoint preserves acyclic fibrations as acyclic fibrations are precisely the levelwise acyclic fibrations. It hence suffices to show that the right adjoint preserves fibrant objects. The proof of~\cite[Proposition 5.6]{TaggartReality} demonstrates that the right adjoint sends real $\Omega$-spectra to orthogonal $\Omega$-spectra, hence it suffices to show that if $X \in \s^\mbf{R}[U(n)]$ is levelwise cofree that $\psi X$ is levelwise cofree. This follows by definition and the interaction of loops and mapping spaces.
\end{proof}

\section{Polynomial functors}

\subsection{Recovering input functors}\label{section: input functors}
 Using the invariance of Weiss calculus under Quillen equivalence, we shall base our comparison between calculus with Reality and orthogonal calculus on the cofree model for calculus with Reality. To ease exposition we will neglect the superscripts for model categories when it will not cause confusion.

The complexification of real vector spaces induces a functor $c: \es{J}^\mbf{O} \to \es{J}^\mbf{R}$, and hence a Quillen adjunction
\[
\adjunction{c_!}{\Fun(\es{J}^\mbf{O}_0, \T)}{\Fun(\es{J}^\mbf{R}_0, \T)}{c^\ast}
\]
on the level of enriched functor categories, see e.g.,~\cite[Lemma 3.2]{TaggartOCandUC}. On the other hand, the adjunction
\[
\adjunction{i^\ast}{\T}{C_2\T^\sf{fine}}{(-)^{C_2}}
\]
extends to an adjunction between spaces and cofree $C_2$-spaces
\[
\adjunction{i^\ast}{\T}{C_2\T^\sf{cofree}}{(-)^{C_2}}
\]
since cofree $C_2$-spaces are a left Bousfield localization of the fine model structure. This lifts to a Quillen adjunction on the level of model structures
\[
\adjunction{i^\ast}{\Fun(\es{J}^\mbf{R}_0,\T)}{\Fun(\es{J}^\mbf{R}_0,C_2\T^\sf{cofree})}{(-)^{C_2}}
\]
with both categories equipped with the projective model structure. It should be noted that on the left-hand side we have a category of $\T$-enriched functors, while on the right-hand side we have a category of $C_2\T$-enriched functors. This difference in enrichment does not interfere with the existence of the adjunction. The result is a composite Quillen adjunction
\[
\adjunction{i^\ast c_!}{\Fun(\es{J}_0^\mbf{O}, \T)}{\Fun(\es{J}_0^\mbf{R}, C_2\T^\sf{cofree})}{c^*(-)^{C_2}}
\]
through which the right adjoint sends a functor with Reality to an orthogonal functor. Given a functor $F$ in orthogonal calculus, the image of $F$ under the unit of the adjunction recovers $F$ up to a shift in degree of the vector spaces since the left Kan extension $c_!F$ is naturally isomorphism (by enriched Yoneda) to the functor $\R^{k} \longmapsto F(\R^{2k})$.  In particular this means that we cannot expect to completely recover orthogonal calculus from calculus with Reality.

\subsection{Polynomial functors} We begin our comparison with the polynomial functors. Unlike many other comparisons between calculi, see e.g.,~\cite{BE16, TaggartOCandUC} we do not need to invoke the classification theorem for homogeneous functors in order to compare polynomial functors. In both versions of Weiss calculus the $n$-polynomial functors are defined via homotopy limits indexed on topological categories, see Definition~\ref{def: n-poly}. In the orthogonal calculus the indexing poset $\mbf{O}_{\leq n+1}$ is the poset of non-zero inner product subspaces of $\R^{n+1}$, and in calculus with Reality the indexing poset $\mbf{R}_{\leq n+1}$ is the poset of non-zero inner product subspaces of $\C \otimes \R^{n+1}$ of the form $\C \otimes U =: U_\C$, for $U \in \mbf{O}_{\leq n+1}$. The comparison of $n$-polynomial functors boils down to a (co)finality statement of homotopy (co)limits indexed on such topological categories, see e.g.,~\cite[Appendix]{Li09}. We will use that (co)finality indexed on topological categories can be proven by showing that an appropriate comma category as a \emph{topologically terminal object}, that is an object $\ast \in \es{C}$ which is terminal in the underlying category, and the unique map of spaces $!: \sf{Ob}(\es{C}) \to \sf{Mor}(\es{C})$ sending every object $c \in \es{C}$ to the unique map $c\to\ast$ is continuous, see e.g.,~\cite[Remark A.6]{Li09}. 

\begin{thm}\label{thm: polynomials preserved}
Let $n$ be a non-negative integer.
If $F$ is a fibrant $n$-polynomial functor in the cofree model for calculus with Reality, then the orthogonal functor $c^\ast(F)^{C_2}$ is $n$-polynomial in orthogonal calculus. 	
\end{thm}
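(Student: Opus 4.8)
The plan is to show that the complexification-then-$C_2$-fixed-points functor $c^\ast(-)^{C_2}$ takes the defining homotopy limit for $n$-polynomiality in calculus with Reality to the corresponding homotopy limit in orthogonal calculus, and hence preserves the property of being $n$-polynomial. The key point is that in the cofree model, $C_2$-fixed points is the \emph{right derived} functor of $C_2$-fixed points (it agrees with homotopy $C_2$-fixed points), so it commutes with homotopy limits; combined with the fact that $c^\ast$ commutes with homotopy limits (it is a precomposition functor, so levelwise nothing happens), this lets us move the functor inside $\holim$. What remains is a purely diagrammatic comparison of the two indexing posets.

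First I would fix $V \in \es{J}^\mbf{O}$ and unwind the target: $c^\ast(F)^{C_2}$ is $n$-polynomial at $V$ iff the map
\[
F(V_\bb{C})^{C_2} \longrightarrow \Big(\underset{0 \neq U \subseteq \bb{R}^{n+1}}{\holim}~F(V_\bb{C} \oplus U_\bb{C})\Big)^{C_2}
\]
is a weak equivalence of spaces. Since $F$ is fibrant in the cofree model, $F$ is levelwise a cofree $C_2$-space, so $(-)^{C_2} \simeq (-)^{hC_2}$ applied levelwise, and homotopy fixed points commute with the (finite-type, compact) homotopy limits appearing here; thus the right-hand side is weakly equivalent to $\holim_{0 \neq U \subseteq \bb{R}^{n+1}}~F(V_\bb{C} \oplus U_\bb{C})^{C_2}$. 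The hypothesis that $F$ is $n$-polynomial in calculus with Reality gives, for every $W \in \es{J}^\mbf{R}$, an equivalence $F(W) \xrightarrow{\sim} \holim_{0 \neq U' \subseteq \bb{C}^{n+1}}~F(W \oplus U')$; taking $C_2$-fixed points, the left-hand term becomes $F(V_\bb{C})^{C_2}$ (at $W = V_\bb{C}$), matching the source above.

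The main obstacle — and the only real content — is the cofinality comparison: the homotopy limit defining polynomiality in calculus with Reality is indexed over the topological poset $\mbf{R}_{\leq n+1}$ of \emph{all} nonzero subspaces of $\bb{C} \otimes \bb{R}^{n+1}$ of the form $U_\bb{C}$, with its genuine $C_2$-action by complex conjugation, whereas after taking fixed points we want a homotopy limit over the poset $\mbf{O}_{\leq n+1}$ of nonzero subspaces of $\bb{R}^{n+1}$. I would argue that the inclusion $\mbf{O}_{\leq n+1} \hookrightarrow (\mbf{R}_{\leq n+1})^{C_2}$, $U \mapsto U_\bb{C}$, is an isomorphism of topological posets (a complex subspace of $\bb{C}\otimes\bb{R}^{n+1}$ fixed by conjugation is precisely the complexification of its real points), and then invoke the standard fact — as in~\cite[Appendix]{Li09} — that for a homotopy limit of a diagram of cofree $C_2$-spaces indexed on a $C_2$-topological poset, taking (homotopy) $C_2$-fixed points is computed by the homotopy limit over the fixed subposet, together with the observation that $F(V_\bb{C} \oplus U_\bb{C})^{C_2}$ depends on $U \in \mbf{O}_{\leq n+1}$ exactly as the orthogonal-calculus diagram $F^{C_2}_c(V \oplus U)$ does. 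Assembling: the source-target square commutes up to the equivalences just described, two sides are equivalences (the $n$-polynomiality of $F$ and the triviality of the comparison of index posets), hence the remaining map $c^\ast(F)^{C_2}(V) \to \tau_n^\mbf{O}(c^\ast(F)^{C_2})(V)$ is a weak equivalence for all $V$, which is exactly the assertion.
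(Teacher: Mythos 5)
Your strategy is essentially the same as the paper's: replace $(-)^{C_2}$ by $(-)^{hC_2}$ using cofree fibrancy of $F$, commute (homotopy) fixed points past the homotopy limit, then reduce to a cofinality comparison between the indexing posets $\mbf{O}_{\leq n+1}$ and $\mbf{R}_{\leq n+1}$. The paper packages this as a commuting square with three sides established and the fourth deduced, and then proves the cofinality by exhibiting a topologically terminal object of the comma category $(c \downarrow V_\bb{C})$ using~\cite[Lemma A.5 and Remark A.6]{Li09}, with care taken that the selection of the terminal morphism is continuous.

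Where your proposal diverges is in the cofinality step, and this is where it is imprecise. You invoke a ``standard fact'' that the fixed points of a homotopy limit of cofree $C_2$-spaces over a $C_2$-topological poset is computed by the homotopy limit over the fixed subposet. No such general fact holds: for a $C_2$-poset with a nontrivial (say free) action on objects, the fixed subposet can be empty while the fixed points of the holim are not, so the two sides cannot generically agree. It happens to work in the present case only because, by construction, $\mbf{R}_{\leq n+1}$ consists precisely of the complexified subspaces $U_\bb{C}$ and hence the conjugation $C_2$-action on the poset is trivial; thus $(\mbf{R}_{\leq n+1})^{C_2} = \mbf{R}_{\leq n+1}$ and your ``standard fact'' degenerates to the ordinary commutation of $(-)^{hC_2}$ with a homotopy limit of a non-equivariant diagram, followed by the statement that $c: \mbf{O}_{\leq n+1} \to \mbf{R}_{\leq n+1}$ is an isomorphism of topological posets. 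That last claim does need verification in the topological-category sense — it is exactly what the paper proves when it checks that the unique map $!: \sf{Ob}(c\downarrow V_\bb{C}) \to \sf{Mor}(c\downarrow V_\bb{C})$ is continuous — and your proposal elides it. I would recommend separating the two steps as the paper does: first commute $(-)^{hC_2}$ past the holim (standard), and then prove cofinality of $c$ via the topological terminality criterion of~\cite[Appendix]{Li09}, which is what that reference actually supplies.
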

\begin{proof}
Since $F$ is levelwise fibrant in the cofree model structure, we can replace $C_2$-fixed points by homotopy $C_2$-fixed points. By~\cite[Definition 5.1]{We95}, we have to show that for each $V \in \es{J}^\mbf{O}$, the canonical map
\[
\rho_\mbf{O}: F(V_\C)^{hC_2} \longrightarrow \underset{U \in \mbf{O}_{\leq n+1}}{\holim}~F(V_\C \oplus U_{\C})^{hC_2}
\]
is a weak equivalence. This map factors as the composite
\begin{equation*}\label{eqn: commuting diagram for n-poly}
\begin{tikzcd}[ampersand replacement=\&]
	{F(V_\C)^{hC_2}} \& {\underset{U \in \mbf{O}_{\leq n+1}}{\holim}~F(V_\C \oplus U_{\C})^{hC_2}} \\
	\\
	{\left[\underset{U_\C \in \mbf{R}_{\leq n+1}}{\holim}~F(V_{\C} \oplus U_{\C})\right]^{hC_2}} \& {\left[\underset{U \in \mbf{O}_{\leq n+1}}{\holim}~F(V_{\C} \oplus U_{\C})\right]^{hC_2}}
	\arrow["{(\rho_{\mbf{R}})^{hC_2}}"', from=1-1, to=3-1]
	\arrow["{c^\ast}"', from=3-1, to=3-2]
	\arrow["{\rho_{\mbf{O}}}", from=1-1, to=1-2]
	\arrow["\simeq"', from=3-2, to=1-2]
\end{tikzcd}
\end{equation*}
where the right-hand vertical map follows from the universal property of homotopy limits and is an equivalence  since right derived functors commute with homotopy limits, and the lower horizontal map is the induced map from the map of topological posets $c: \mbf{O}_{\leq n+1} \to \mbf{R}_{\leq n+1}$. It suffices to show that this lower horizontal map is a weak equivalence, since by assumption the left-hand vertical map is a weak equivalence. By the dual of~\cite[Lemma A.5]{Li09}, it suffices to prove that for every $V_\C \in \mbf{R}_{\leq n+1}$ the comma category $(c \downarrow V_\C)$ has a topologically terminal object.

The comma category $(c \downarrow V_\C)$ has objects pairs $(U, f_U)$ with $U \in \mbf{O}_{\leq n+1}$ and $f_U: U_\C \to V_\C$ in $\mbf{R}_{\leq n+1}$. A morphism $h: (U, f_U) \to (W,f_W)$ in $(c \downarrow V_\C)$ is the data of a linear isometry $h: U \to W$ in $\mbf{O}_{\leq n+1}$ such that the triangle
\[\begin{tikzcd}
	{U_\C} && {W_\C} \\
	& {V_\C}
	\arrow["{f_U}"', from=1-1, to=2-2]
	\arrow["{f_U}", from=1-3, to=2-2]
	\arrow["{c(h)}", from=1-1, to=1-3]
\end{tikzcd}\]
commutes in $\mbf{R}_{\leq n+1}$. Since $\mbf{R}_{\leq n+1}$ is the poset of subspaces of $\C \otimes \R^{n+1}$ of the form $\C \otimes U$ for $U$ a subspace of $\R^{n+1}$ ordered by inclusion, it follows that every map $U_\C \to V_\C$ in $\mbf{R}_{\leq n+1}$ must be either the identity on $U_\C$ or the inclusion of a complex subspace. One can readily check that the inclusion of a complex subspace is fixed under the $C_2$-action by complex conjugation hence is of the form $c(f)$ for $g$ a real linear map. It follows that every map $f: U_\C \to V_\C$ must be of the form $c(g)$ for $g: U \to V$ in $\mbf{O}_{\leq n+1}$.

The comma category $(c \downarrow V_\C)$ is a category object in spaces with topology inherited from the product topology on $\sf{Ob}(\mbf{O}_{\leq n+1}) \times \sf{Mor}(\mbf{R}_{\leq n+1})$. On the underlying non-topological category, the object $(V, \id_{V_\C})$ is terminal in $(c \downarrow V_\C)$, with the unique map $(U, f_U) \to (V, \id_{V_\C})$ given by $f_U$, using the above fact that $f_U = c(g)$ for some $g: U \to V$ in $\mbf{O}_{\leq n+1}$. It is left to show that the unique map 
\[
! : \sf{Ob}(c \downarrow V_\C) \longrightarrow \sf{Mor}(c \downarrow V_\C),
\]
is continuous. To see this, note that the unique map is the restriction of domain and codomain of a the projection map
\[
\pi_2: \sf{Ob}(\mbf{O}_{\leq n+1})\times \sf{Mor}(\mbf{R}_{\leq n+1}) \longrightarrow \sf{Mor}(\mbf{R}_{\leq n+1}),
\]
hence continuous. 
\end{proof}

By examining the construction of the $n$-polynomial approximation $T_n$ as iterates of the functor $\tau_n$, where 
\[
\tau_nF(V) = \underset{0 \neq U \subseteq \mathds{k}^{n+1}}{\holim}~F(V \oplus U),
\]
the above result will provide the interaction between the polynomial approximations in orthogonal calculus and calculus with Reality. The definition of $\tau_n$ implies the following corollary.

\begin{cor}\label{cor: C_2-fixed points preserve tau}
Let $n$ be a non-negative integer. If $F$ is an fibrant functor in the cofree model for calculus with Reality, then there is a levelwise equivalence of orthogonal functors
\[
c^\ast(\tau_n^\mbf{R}F)^{C_2} \longrightarrow \tau_n^\mbf{O}(c^\ast F^{C_2}).
\]
\end{cor}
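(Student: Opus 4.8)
The plan is to deduce the corollary directly from Theorem~\ref{thm: polynomials preserved} by running essentially the same comma-category argument, but now \emph{without} assuming that $F$ itself is $n$-polynomial; the map in question is $\tau_n$, not $T_n$, so no Bousfield-localization or homotopy-colimit input is needed. Concretely, I would first recall that, since $F$ is levelwise fibrant in the cofree model structure, the $C_2$-fixed points $(-)^{C_2}$ agree (levelwise) with homotopy $C_2$-fixed points $(-)^{hC_2}$, and that homotopy fixed points, being a right derived functor, commute with the homotopy limits defining $\tau_n$. Unwinding the definitions, $\tau_n^\mbf{R}F(V_\bb{C}) = \holim_{U_\bb{C}\in\mbf{R}_{\leq n+1}} F(V_\bb{C}\oplus U_\bb{C})$ and $\tau_n^\mbf{O}(c^\ast F^{C_2})(V) = \holim_{U\in\mbf{O}_{\leq n+1}} F(V_\bb{C}\oplus U_\bb{C})^{hC_2}$, so the natural comparison map $c^\ast(\tau_n^\mbf{R}F)^{C_2}(V)\to\tau_n^\mbf{O}(c^\ast F^{C_2})(V)$ factors, exactly as in the displayed diagram inside the proof of Theorem~\ref{thm: polynomials preserved}, through $\big[\holim_{U_\bb{C}\in\mbf{R}_{\leq n+1}}F(V_\bb{C}\oplus U_\bb{C})\big]^{hC_2}\xrightarrow{c^\ast}\big[\holim_{U\in\mbf{O}_{\leq n+1}}F(V_\bb{C}\oplus U_\bb{C})\big]^{hC_2}$ followed by the equivalence interchanging $(-)^{hC_2}$ with $\holim$.

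The key point is therefore that the restriction map along $c\colon\mbf{O}_{\leq n+1}\to\mbf{R}_{\leq n+1}$ on homotopy limits is a weak equivalence. But this is precisely the content of the cofinality argument carried out in the proof of Theorem~\ref{thm: polynomials preserved}: for each $V_\bb{C}\in\mbf{R}_{\leq n+1}$ the comma category $(c\downarrow V_\bb{C})$ has a topologically terminal object $(V,\id_{V_\bb{C}})$, using that every morphism $U_\bb{C}\to V_\bb{C}$ in $\mbf{R}_{\leq n+1}$ is of the form $c(g)$ for a unique $g\colon U\to V$ in $\mbf{O}_{\leq n+1}$, and that the section $\sf{Ob}(c\downarrow V_\bb{C})\to\sf{Mor}(c\downarrow V_\bb{C})$ is continuous because it is a restriction of the projection $\pi_2$. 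By the dual of~\cite[Lemma A.5]{Li09} this gives the claimed equivalence of homotopy limits, levelwise in $V$. Combining the two equivalent factor maps shows $c^\ast(\tau_n^\mbf{R}F)^{C_2}(V)\to\tau_n^\mbf{O}(c^\ast F^{C_2})(V)$ is a weak equivalence for every $V\in\es{J}^\mbf{O}$, i.e.\ a levelwise equivalence of orthogonal functors; naturality in $V$ is immediate since every step is natural.

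I expect the only genuine subtlety — and it is minor — to be bookkeeping: making sure the comparison map is natural in $V$ and that one does \emph{not} silently reuse the hypothesis that $F$ is $n$-polynomial anywhere (it is not needed for $\tau_n$, only for $T_n$). Everything else is an immediate transcription of the argument already given, so the proof can simply cite the proof of Theorem~\ref{thm: polynomials preserved} for the cofinality step and note that the $n$-polynomiality of $F$ played no role there.
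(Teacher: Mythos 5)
Your proof is correct and follows exactly the route the paper takes: the paper's proof of the corollary is a one-line pointer back to the cofinality argument inside the proof of Theorem~\ref{thm: polynomials preserved} (namely, that the restriction along $c\colon\mbf{O}_{\leq n+1}\to\mbf{R}_{\leq n+1}$ of homotopy limits is an equivalence because the comma categories have topologically terminal objects), combined with the observation that the cofree-fibrancy of $F$ lets one replace $C_2$-fixed points by homotopy $C_2$-fixed points, which commute with the homotopy limits defining $\tau_n$. You have simply unwound that pointer into explicit steps and correctly flagged that the $n$-polynomiality hypothesis of the theorem was used only for the left vertical map $(\rho_\mbf{R})^{hC_2}$, which is not part of the comparison needed here.
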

\begin{proof}
The above proof provides an equivalence on homotopy $C_2$-fixed points, and the cofree assumption on $F$ concludes that all maps from $C_2$-fixed points to homotopy $C_2$-fixed points are weak equivalences.
\end{proof}

\subsection{Polynomial approximation}

To give an equivalence on polynomial approximations, we first provide an equivalence on iterations of $\tau_n$, used in constructing the polynomial approximation functor.

\begin{lem}\label{lem: C_2-fixed points preserve powers of tau}
Let $k$ and $n$ be non-negative integers. If $F$ is an fibrant functor in the cofree model for calculus with Reality, then there is a levelwise equivalence of orthogonal functors
\[
c^\ast((\tau_n^\mbf{R})^kF)^{C_2} \longrightarrow (\tau_n^\mbf{O})^k(c^\ast F^{C_2}).
\]		
\end{lem}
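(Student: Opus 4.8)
The plan is to prove the statement by induction on $k$, using Corollary~\ref{cor: C_2-fixed points preserve tau} as both the base case ($k=1$, with $k=0$ being trivial) and the engine of the inductive step. First I would observe that the only subtlety is that the map in Corollary~\ref{cor: C_2-fixed points preserve tau} was produced for \emph{fibrant} functors $F$ in the cofree model, so in order to iterate I need to know that $\tau_n^\mbf{R}F$ is again fibrant enough for the corollary to apply. This is where the cofree model does its work: the functor $\tau_n^\mbf{R}$ is built out of topological homotopy limits of values of $F$, and homotopy limits in the cofree model structure are computed levelwise and preserve levelwise cofree-ness (equivalently, being an $(EC_2)_+$-local object is closed under homotopy limits, since it is detected by the map $X^{C_2}\to X^{hC_2}$ and both $(-)^{C_2}$ and $(-)^{hC_2}$ commute with homotopy limits). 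Thus if $F$ is levelwise fibrant in $C_2\T^\sf{cofree}$, so is $\tau_n^\mbf{R}F$, and hence so is $(\tau_n^\mbf{R})^kF$ for every $k$.

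Granting this, the inductive step runs as follows. Suppose we have a levelwise equivalence
\[
c^\ast((\tau_n^\mbf{R})^{k}F)^{C_2} \longrightarrow (\tau_n^\mbf{O})^{k}(c^\ast F^{C_2}).
\]
Applying $\tau_n^\mbf{O}$ to this map gives a levelwise equivalence
\[
\tau_n^\mbf{O}\,c^\ast((\tau_n^\mbf{R})^{k}F)^{C_2} \longrightarrow (\tau_n^\mbf{O})^{k+1}(c^\ast F^{C_2}),
\]
since $\tau_n^\mbf{O}$ preserves levelwise weak equivalences (it is a homotopy limit of values of a levelwise fibrant functor; one should first levelwise-fibrantly replace if necessary, which is harmless in $\T$). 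On the other hand, Corollary~\ref{cor: C_2-fixed points preserve tau} applied to the fibrant functor $(\tau_n^\mbf{R})^{k}F$ gives a levelwise equivalence
\[
c^\ast(\tau_n^\mbf{R}(\tau_n^\mbf{R})^{k}F)^{C_2} \longrightarrow \tau_n^\mbf{O}(c^\ast((\tau_n^\mbf{R})^{k}F)^{C_2}).
\]
Composing these two equivalences and identifying $\tau_n^\mbf{R}(\tau_n^\mbf{R})^kF = (\tau_n^\mbf{R})^{k+1}F$ yields the desired levelwise equivalence at stage $k+1$. One should check that the composite agrees (up to homotopy) with the evident natural comparison map, which follows from naturality of the factorization constructed in the proof of Theorem~\ref{thm: polynomials preserved}.

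The main obstacle, and the point that deserves the most care, is the preservation of levelwise fibrancy (equivalently levelwise cofree-ness) under $\tau_n^\mbf{R}$. If this fails one cannot legitimately iterate Corollary~\ref{cor: C_2-fixed points preserve tau}, because its proof crucially replaces $C_2$-fixed points by homotopy $C_2$-fixed points using exactly the cofree hypothesis. The argument is short once phrased correctly: the class of cofree (i.e.\ $(EC_2)_+$-local) $C_2$-spaces is closed under arbitrary homotopy limits because $(EC_2)_+$-locality is equivalent to $X^{C_2}\xrightarrow{\simeq}X^{hC_2}$ and both functors are right derived functors, hence commute with homotopy limits; since $\tau_n^\mbf{R}F(V)$ is such a homotopy limit of the cofree spaces $F(V_\bb{C}\oplus U_\bb{C})$, it is cofree, and levelwise fibrant replacement in $C_2\T^\sf{cofree}$ then changes nothing up to weak equivalence. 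A secondary, purely bookkeeping point is that all the homotopy limits and fixed-point functors here are the point-set models used in~\cite{TaggartReality} and~\cite{We95}, so one should phrase the induction in terms of honest natural transformations of functors $\es{J}^\mbf{O}\to\T$ and only invoke ``levelwise equivalence'' at the end, exactly as in the statement.
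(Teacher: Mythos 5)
Your proof is correct, but it takes a genuinely different route from the paper's. The paper does not iterate Corollary~\ref{cor: C_2-fixed points preserve tau} by induction on $k$. Instead, it observes that the iterated construction
\[
(\tau_n^\mbf{R})^kF(V) \simeq \underset{(U_1,\dots,U_k)}{\holim}\, F(V \oplus (U_1)_\bb{C} \oplus \cdots \oplus (U_k)_\bb{C})
\]
is itself a single topological homotopy limit over the $k$-fold product of the poset $\mbf{R}_{\leq n+1}$, and then runs the same cofinality argument as in Theorem~\ref{thm: polynomials preserved} once, on this larger diagram. The trade-off is instructive. Your induction buys you the ability to reuse the $k=1$ case verbatim, but forces you to establish and use the extra fact that $\tau_n^\mbf{R}$ preserves levelwise fibrancy in the cofree model — which you correctly identify as the crux, and your argument for it (cofree objects are exactly those with $X^{C_2}\to X^{hC_2}$ an equivalence, and both functors commute with homotopy limits) is sound. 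The paper's single-holim route avoids this auxiliary preservation lemma entirely, because only the original $F$ needs to be levelwise cofree to justify swapping $(-)^{C_2}$ for $(-)^{hC_2}$; the price is that the cofinality check is now for the product topological poset rather than for a single factor, though the comma category still has a topologically terminal object by the same reasoning componentwise. Both are legitimate; the paper's is shorter once Theorem~\ref{thm: polynomials preserved} is in hand, while yours is arguably more modular. One thing worth making explicit if you keep the inductive route is that the composite you build really is the canonical comparison map — you flag this, and it does follow from naturality of the factorization in the proof of Theorem~\ref{thm: polynomials preserved}, but it deserves a sentence rather than an aside.
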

\begin{proof}
This follows as in Corollary~\ref{cor: C_2-fixed points preserve tau} by writing the iterated homotopy limit $(\tau_n)^k$	 as a single homotopy limit, and noting that our fibrant assumption insures homotopy invariance. 
\end{proof}

Taking the homotopy colimit of Lemma~\ref{lem: C_2-fixed points preserve powers of tau} over $k$ provides an equivalence on polynomial approximations. 

\begin{thm}\label{thm: polynomial approximation preserved}
Let $n$ be a non-negative integer. If $F$ is an fibrant functor in the cofree model for calculus with Reality, then there is a levelwise equivalence of orthogonal functors
\[
c^\ast(T_n^\mbf{R}F)^{C_2} \longrightarrow T_n^\mbf{O}(c^\ast F^{C_2}).
\]
\end{thm}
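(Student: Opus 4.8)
The plan is to build $T_n$ from the pieces already assembled and pass the equivalence through the defining sequential homotopy colimit. Recall that by definition $T_n^\mbf{O}(c^\ast F^{C_2})$ is the homotopy colimit of the sequence
\[
c^\ast F^{C_2} \longrightarrow \tau_n^\mbf{O}(c^\ast F^{C_2}) \longrightarrow (\tau_n^\mbf{O})^2(c^\ast F^{C_2}) \longrightarrow \cdots,
\]
and similarly $T_n^\mbf{R}F$ is the homotopy colimit of the $(\tau_n^\mbf{R})^k$-tower. Since $c^\ast$ is given by precomposition with a functor of indexing categories and $(-)^{C_2}$ on cofree $C_2$-spaces agrees with homotopy $C_2$-fixed points, both $c^\ast$ and $(-)^{C_2}$ commute with sequential homotopy colimits up to levelwise equivalence; hence $c^\ast(T_n^\mbf{R}F)^{C_2}$ is levelwise equivalent to the homotopy colimit over $k$ of $c^\ast((\tau_n^\mbf{R})^kF)^{C_2}$.

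First I would invoke Lemma~\ref{lem: C_2-fixed points preserve powers of tau} to get, for each $k$, a levelwise equivalence
\[
c^\ast((\tau_n^\mbf{R})^kF)^{C_2} \longrightarrow (\tau_n^\mbf{O})^k(c^\ast F^{C_2}).
\]
Second, I would check that these equivalences are compatible with the structure maps of the two towers, i.e.\ that for each $k$ the square relating the $k$-th and $(k{+}1)$-st stages commutes up to homotopy; this is essentially naturality of the comparison map $c^\ast(\tau_n^\mbf{R}(-))^{C_2} \to \tau_n^\mbf{O}(c^\ast(-)^{C_2})$ from Corollary~\ref{cor: C_2-fixed points preserve tau} applied to the functors $(\tau_n^\mbf{R})^kF$, together with the fact (noted in Lemma~\ref{lem: C_2-fixed points preserve powers of tau}) that each $(\tau_n^\mbf{R})^kF$ remains levelwise fibrant in the cofree model, so homotopy invariance of the relevant homotopy limits is available at every stage. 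Third, a levelwise equivalence of towers induces a levelwise equivalence on sequential homotopy colimits, which gives the desired map.

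Assembling these: $c^\ast(T_n^\mbf{R}F)^{C_2} \simeq \hocolim_k c^\ast((\tau_n^\mbf{R})^kF)^{C_2} \simeq \hocolim_k (\tau_n^\mbf{O})^k(c^\ast F^{C_2}) = T_n^\mbf{O}(c^\ast F^{C_2})$, where the first equivalence uses that $c^\ast$ and homotopy $C_2$-fixed points preserve sequential homotopy colimits and the cofree hypothesis identifies $(-)^{C_2}$ with $(-)^{hC_2}$, the middle one is the levelwise tower equivalence just discussed, and the last is the definition of $T_n^\mbf{O}$. The main obstacle, I expect, is bookkeeping the commutativity of the connecting squares coherently enough to produce a genuine map of towers rather than merely a stagewise collection of equivalences — that is, ensuring the comparison maps are natural in $F$ so that they assemble across the sequence; once that naturality is in hand (which it is, since every construction involved — $\tau_n$, $c^\ast$, homotopy fixed points, the comparison map from Theorem~\ref{thm: polynomials preserved} and its corollary — is natural), the passage to the homotopy colimit is formal. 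A minor point to be careful about is that the homotopy colimit is sequential (hence filtered), so it commutes with the finite/compact homotopy limits defining $\tau_n$ and with homotopy fixed points in the cofree setting, which is exactly what makes the first equivalence valid.
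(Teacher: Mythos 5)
Your overall route matches the paper's: write $T_n$ as the sequential homotopy colimit of the $(\tau_n)^k$-tower, use Lemma~\ref{lem: C_2-fixed points preserve powers of tau} for the stagewise comparison, check naturality, and pass to the colimit. However, there is a genuine conceptual error in the justification of the key commutation step, and it is worth flagging because the paper's own introduction explicitly warns against it.

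You write that ``homotopy $C_2$-fixed points preserve sequential homotopy colimits'' in the cofree setting, and that this is what allows moving $(-)^{C_2}$ past $\hocolim_k$. This is backwards. Homotopy fixed points are a (right-derived) homotopy \emph{limit} and do \emph{not} commute with sequential homotopy colimits in general; being cofree does not rescue this, because a sequential homotopy colimit of cofree objects need not itself be cofree. What the paper actually uses is that \emph{categorical} $C_2$-fixed points commute with sequential homotopy colimits (a finite limit commuting with a filtered colimit), which is true unconditionally and requires no cofreeness at all. The cofree hypothesis is needed one step earlier, in the proof of Corollary~\ref{cor: C_2-fixed points preserve tau} and Lemma~\ref{lem: C_2-fixed points preserve powers of tau}, to trade categorical fixed points for homotopy fixed points so that $(-)^{C_2}$ can be passed inside the homotopy limit defining $\tau_n$. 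Once the stagewise equivalences are in hand, the remaining hocolim step uses only the always-true commutation of categorical fixed points with filtered colimits. So the conclusion you reach is correct and the proof skeleton is the right one, but the reason you give for the first displayed identification is precisely the false claim the cofree model was introduced to avoid.

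A minor additional remark: the aside that the sequential hocolim ``commutes with the finite/compact homotopy limits defining $\tau_n$'' is not needed anywhere in the argument and, as stated, is not a formal consequence of the colimit being filtered — homotopy colimits do not commute with homotopy limits even in the filtered case; in Weiss calculus the interaction of $T_n$ with $\tau_n$ is a hard connectivity estimate, not a triviality. It is best dropped.
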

\begin{proof}
Inductively use Lemma~\ref{lem: C_2-fixed points preserve powers of tau} with the definition of $T_n$, and the fact that $C_2$-fixed points commute with sequential homotopy colimits. 
\end{proof}


With this we can show that on the level of homotopy theories (or $\infty$-categories or model categories) the adjunction between functor categories lifts to an equivalence between the $\infty$-categories of $n$-polynomial functors in the calculi. 

\begin{prop}\label{prop: QA for poly model structures}
Let $n$ be a non-negative integer. The adjoint pair
\[
\adjunction{i^\ast c_!}{\poly{n}(\es{J}^\mbf{O}, \T)}{\poly{n}(\es{J}^\mbf{R}, C_2\T)}{c^\ast(-)^{C_2}}
\]
is a Quillen adjunction.
\end{prop}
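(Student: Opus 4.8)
The plan is to show that $(i^\ast c_!, c^\ast(-)^{C_2})$ is a Quillen adjunction between the $n$-polynomial model structures; once this is established, the induced adjunction on underlying $\infty$-categories follows from the general nonsense recalled in the Conventions section. Since we already know from $\S$\ref{section: input functors} that $(i^\ast c_!, c^\ast(-)^{C_2})$ is a Quillen adjunction between the projective model structures on $\Fun(\es{J}^\mbf{O}_0, \T)$ and $\Fun(\es{J}^\mbf{R}_0, C_2\T^\sf{cofree})$, and since the $n$-polynomial model structures are left Bousfield localizations of these projective model structures, it suffices by~\cite[Theorem 3.1.6 and Proposition 3.3.18]{Hi03} to verify that the right adjoint $c^\ast(-)^{C_2}$ takes fibrant objects of $\poly{n}(\es{J}^\mbf{R}, C_2\T)$ to fibrant objects of $\poly{n}(\es{J}^\mbf{O}, \T)$, and takes weak equivalences between such fibrant objects to weak equivalences. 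Equivalently, by the characterization of local model structures, it is enough to check that $c^\ast(-)^{C_2}$ sends fibrant $n$-polynomial functors in the cofree model for calculus with Reality to $n$-polynomial orthogonal functors, since the right adjoint automatically preserves acyclic fibrations (these are levelwise, and $c^\ast(-)^{C_2}$ is levelwise right Quillen) and the remaining weak-equivalence condition reduces, via Ken Brown's lemma applied in the projective model structures, to the preservation of fibrant objects together with the preservation of levelwise weak equivalences between levelwise fibrant objects.

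First I would observe that a fibrant object $F$ of $\poly{n}(\es{J}^\mbf{R}, C_2\T)$ is precisely a levelwise fibrant $n$-polynomial functor in the cofree model for calculus with Reality, so Theorem~\ref{thm: polynomials preserved} applies directly and tells us that $c^\ast(F)^{C_2}$ is $n$-polynomial in orthogonal calculus. It is also levelwise fibrant in $\T$, because $(-)^{C_2}: C_2\T^\sf{cofree} \to \T$ is right Quillen and hence preserves fibrant objects, and $c^\ast$ is just precomposition. This handles the preservation of fibrant objects. For the preservation of weak equivalences between fibrant objects: a weak equivalence in $\poly{n}(\es{J}^\mbf{R}, C_2\T)$ between levelwise fibrant $n$-polynomial functors is a levelwise underlying weak equivalence of $C_2$-spaces between cofree $C_2$-spaces, and such maps are sent by $(-)^{C_2}$ to weak equivalences of spaces — precisely because on cofree $C_2$-spaces the fixed points functor computes the homotopy fixed points, which are homotopy-invariant for underlying equivalences. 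Precomposing with $c$ preserves this levelwise property, giving a levelwise weak equivalence of orthogonal functors, which is in particular a weak equivalence in $\poly{n}(\es{J}^\mbf{O}, \T)$.

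Assembling these observations: $(i^\ast c_!, c^\ast(-)^{C_2})$ is a Quillen adjunction on projective model structures, the right adjoint preserves both fibrant objects and weak equivalences between fibrant objects for the $n$-polynomial localizations, and therefore by the universal property of left Bousfield localization the adjunction descends to a Quillen adjunction between the $n$-polynomial model structures; passing to underlying $\infty$-categories yields the claimed adjunction. I expect the main (though modest) obstacle to be bookkeeping around the mismatch in enrichment — $\T$-enriched functors on the source versus $C_2\T$-enriched functors on the target — and making sure that the fibrancy and weak-equivalence conditions one needs to check are genuinely the levelwise ones, so that Theorem~\ref{thm: polynomials preserved} and the cofree identification $(-)^{C_2} \simeq (-)^{hC_2}$ can be invoked verbatim; the substantive homotopical content has already been extracted in Theorem~\ref{thm: polynomials preserved} and Corollary~\ref{cor: C_2-fixed points preserve tau}.
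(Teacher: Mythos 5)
Your proof is correct, but it takes a genuinely different route from the paper's. The paper verifies the Quillen adjunction by directly checking that the right adjoint preserves \emph{all} fibrations of the $n$-polynomial model structure, using the Bousfield--Friedlander characterization of such fibrations: a projective fibration $f$ is $n$-polynomial fibration if and only if the naturality square of $f$ against $T_n$ is a homotopy pullback. It then uses the fact that $c^\ast(-)^{C_2}$ preserves homotopy pullbacks together with Theorem~\ref{thm: polynomial approximation preserved} (comparing $T_n^\mbf{R}$ and $T_n^\mbf{O}$ after fixed points) to pass this square across the adjunction. You instead invoke the softer criterion that a Quillen adjunction descends to a left Bousfield localization as soon as the right adjoint preserves local fibrant objects, and plug in Theorem~\ref{thm: polynomials preserved} rather than Theorem~\ref{thm: polynomial approximation preserved}. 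Both routes work; yours sidesteps the homotopy-pullback bookkeeping, while the paper's makes the preservation of fibrations fully explicit without appealing to the fibrant-object criterion. One small caveat: the references $[$Hi03, Theorem~3.1.6 and Proposition~3.3.18$]$ do not quite state the criterion you need. What you want is the combination of the characterization of fibrations between local fibrant objects together with the fact that a functor with left Quillen adjoint (on the underlying model structures) is right Quillen on the localizations provided it preserves acyclic fibrations and fibrations between fibrant objects; the cleanest references here are $[$Hi03, Theorem~3.3.20$]$ (which the paper itself cites for this kind of descent statement) together with $[$Hi03, Proposition~3.4.1$]$ or the Joyal--Tierney criterion. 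The mathematical content of your argument is nonetheless sound, and the extra check you do on weak equivalences between fibrant objects, while strictly redundant given the fibrant-object preservation, is harmless and correctly argued via the cofree identification $(-)^{C_2} \simeq (-)^{hC_2}$.
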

\begin{proof}
The acyclic fibrations of the $n$-polynomial model structure from calculus with Reality are the levelwise acyclic fibrations which the right adjoint preserves by the discussion in Subsection~\ref{section: input functors}. 

Moreover, a map $f: E \to F$ is a fibration in the $n$-polynomial model structure from calculus with Reality if and only if $f: E  \to F$ is a underlying fibration (which are preserved by the right adjoint), and the canonical square
\[\begin{tikzcd}
	E & F \\
	{T_n^\mbf{R}(E)} & {T_n^\mbf{R}(F)}
	\arrow[from=1-1, to=2-1]
	\arrow["f", from=1-1, to=1-2]
	\arrow[from=1-2, to=2-2]
	\arrow["{T_n^\mbf{R}(f)}"', from=2-1, to=2-2]
\end{tikzcd}\]
is a homotopy pullback in the projective model structure. As the right adjoint is a right Quillen functor on projective model structures it preserves homotopy pullbacks and hence we see that we have a diagram
\[\begin{tikzcd}
	{c^\ast E^{C_2}} & {} & {c^\ast F^{C_2}} \\
	{c^\ast(T_n^\mbf{R}(E))^{C_2}} & {} & {c^\ast(T_n^\mbf{R}(F))^{C_2}} \\
	{T_n^\mbf{O}(c^\ast E^{C_2})} & {} & {T_n^\mbf{O}(c^\ast F^{C_2})}
	\arrow[from=1-3, to=2-3]
	\arrow["\simeq", from=2-3, to=3-3]
	\arrow["{c^\ast f^{C_2}}", from=1-1, to=1-3]
	\arrow["{c^\ast(T_n^\mbf{R}(f))^{C_2}}", from=2-1, to=2-3]
	\arrow["{T_n^\mbf{O}(c^*f^{C_2})}"', from=3-1, to=3-3]
	\arrow["\simeq"', from=2-1, to=3-1]
	\arrow[from=1-1, to=2-1]
\end{tikzcd}\]
in which the top square is a homotopy pullback as the lower vertical arrows are weak equivalences, and hence the outer square must also be a homotopy pullback. It follows that the right adjoint preserves fibrations.
\end{proof}

\section{The Weiss tower}

\subsection{Homogeneous functors} The next step in understanding the relationship between Weiss towers is understanding how the layers of the towers interact. More generally, we obtain the following relationship between homogeneous functors. 

\begin{thm}
Let $n$ be a non-negative integer. If $F$ is a homogeneous functor of degree $n$ in the cofree model for calculus with Reality, then $c^\ast F^{C_2}$ is a homogeneous functor of degree $n$ in orthogonal calculus. 
\end{thm}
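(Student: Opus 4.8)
The plan is to combine Theorem~\ref{thm: polynomial approximation preserved} with the definition of an $n$-homogeneous functor, which requires verifying two things about $c^\ast F^{C_2}$: that it is $n$-polynomial, and that it is $n$-reduced (i.e. $T_{n-1}^\mbf{O}(c^\ast F^{C_2})$ is levelwise terminal). First I would note that since $F$ is $n$-homogeneous it is in particular $n$-polynomial, and — since homogeneous functors are fibrant in the $n$-polynomial (hence also in the underlying) model structure — $F$ is a fibrant functor in the cofree model, so Theorem~\ref{thm: polynomials preserved} applies directly and gives that $c^\ast F^{C_2}$ is $n$-polynomial in orthogonal calculus. That disposes of the first requirement.

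For the second requirement, I would apply Theorem~\ref{thm: polynomial approximation preserved} with $n$ replaced by $n-1$: since $F$ is fibrant in the cofree model, there is a levelwise weak equivalence of orthogonal functors
\[
c^\ast(T_{n-1}^\mbf{R}F)^{C_2} \longrightarrow T_{n-1}^\mbf{O}(c^\ast F^{C_2}).
\]
Because $F$ is $n$-reduced in calculus with Reality, $T_{n-1}^\mbf{R}F$ is levelwise equivalent to the terminal object of $C_2\T$; applying $c^\ast(-)^{C_2}$ and using that $C_2$-fixed points of (a cofree replacement of) the terminal $C_2$-space is the terminal space, together with the fact that $c^\ast$ and $(-)^{C_2}$ preserve levelwise equivalences between fibrant functors, gives that $c^\ast(T_{n-1}^\mbf{R}F)^{C_2}$ is levelwise terminal. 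Combining this with the displayed weak equivalence shows $T_{n-1}^\mbf{O}(c^\ast F^{C_2})$ is levelwise terminal, i.e. $c^\ast F^{C_2}$ is $(n$-$)$reduced. Hence $c^\ast F^{C_2}$ is $n$-polynomial and $n$-reduced, so it is $n$-homogeneous.

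The main obstacle I anticipate is a bookkeeping one rather than a conceptual one: making sure the fibrancy hypotheses line up so that Theorems~\ref{thm: polynomials preserved} and~\ref{thm: polynomial approximation preserved} genuinely apply. In particular one must check that an $n$-homogeneous functor (fibrant in the $n$-homogeneous model structure, hence $n$-polynomial and levelwise fibrant in the cofree sense) really does satisfy the ``fibrant functor in the cofree model'' hypothesis used throughout Section~3, and that the terminal object of $C_2\T^\sf{cofree}$ (which is already cofree) has $C_2$-fixed points equal to the terminal space, so that no subtlety with replacing the terminal $C_2$-space intervenes. Once these are checked the argument is a short two-step deduction from the polynomial-approximation comparison already established.
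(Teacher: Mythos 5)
Your proposal is correct and follows essentially the same route as the paper: reduce the claim to the two defining conditions, invoke Theorem~\ref{thm: polynomials preserved} for $n$-polynomiality, and invoke Theorem~\ref{thm: polynomial approximation preserved} at level $n-1$ for $n$-reducedness. The extra bookkeeping you flag (fibrancy in the cofree model, triviality of $C_2$-fixed points of the terminal object) is a sensible check and is implicitly assumed in the paper's brief proof.
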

\begin{proof}
A homogeneous functor of degree $n$ is precisely a polynomial functor of degree less than or equal $n$, such that the $(n-1)$-polynomial approximation vanishes. In light of Theorem \ref{thm: polynomials preserved}, it suffices to show that triviality of $(n-1)$-polynomial approximations is preserved, which follows from Theorem \ref{thm: polynomial approximation preserved}.
\end{proof}

As with the $\infty$-categories of polynomial functors, the adjoint pair between functor categories induces an adjoint pair between $\infty$-categories of homogeneous functors.  In the following proof we make use of the fact that many of the results in unitary calculus hold also in calculus with Reality. The proofs of the results uses in this way can be verbatim translated to calculus with Reality since they only rely on formal calculus arguments and the classification of homogeneous functors. 

\begin{prop}\label{prop: QA for homog model structures}
Let $n$ be a non-negative integer. The adjoint pair
\[
\adjunction{i^\ast c_!}{\homog{n}(\es{J}^\mbf{O}, \T)}{\homog{n}(\es{J}^\mbf{R}, C_2\T^\sf{cofree})}{c^\ast(-)^{C_2}}
\]
is a Quillen adjunction.
\end{prop}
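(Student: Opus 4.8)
The goal is to prove that the adjoint pair $(i^\ast c_!, c^\ast(-)^{C_2})$ descends to an adjunction of the underlying $\infty$-categories of the $n$-homogeneous model structures. The plan is to mimic the proof of Proposition~\ref{prop: QA for poly model structures}: it suffices to show that the adjoint pair
\[
\adjunction{i^\ast c_!}{\homog{n}(\es{J}^\mbf{O}, \T)}{\homog{n}(\es{J}^\mbf{R}, C_2\T^\sf{cofree})}{c^\ast(-)^{C_2}}
\]
is a Quillen adjunction, since a Quillen adjunction of model categories induces an adjunction of underlying $\infty$-categories. Because the $n$-homogeneous model structure is a \emph{right} Bousfield localization (a colocalization) of the $n$-polynomial model structure (Lemma~\ref{lem: homogeneous model structure exists}), fibrations and acyclic fibrations are unchanged from the $n$-polynomial model structure, while the cofibrant objects shrink to the $n$-reduced projectively cofibrant functors. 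So the strategy is to check the two standard conditions: that the right adjoint $c^\ast(-)^{C_2}$ preserves fibrations and acyclic fibrations.

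First I would dispose of acyclic fibrations: the acyclic fibrations of the $n$-homogeneous model structure coincide with those of the $n$-polynomial model structure, which in turn are the levelwise acyclic fibrations, and these are preserved by the right adjoint by the discussion in Subsection~\ref{section: input functors} (indeed $c^\ast(-)^{C_2}$ preserves levelwise acyclic fibrations since $(-)^{C_2}$ is right Quillen from $C_2\T^\sf{cofree}$ and $c^\ast$ is levelwise). Next, fibrations: again the fibrations in the $n$-homogeneous model structure are exactly the $n$-polynomial fibrations, so by the description recalled in the proof of Proposition~\ref{prop: QA for poly model structures}, a map $f: E \to F$ is such a fibration if and only if $f$ is an underlying (levelwise) fibration and the square comparing $f$ with $T_n^\mbf{R}(f)$ is a homotopy pullback in the projective model structure. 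The right adjoint preserves underlying fibrations, and it preserves homotopy pullbacks (being a right derived functor). Combining this with Theorem~\ref{thm: polynomial approximation preserved}, which provides a levelwise weak equivalence $c^\ast(T_n^\mbf{R}F)^{C_2} \simeq T_n^\mbf{O}(c^\ast F^{C_2})$ for fibrant $F$, one runs the same three-tier diagram as in Proposition~\ref{prop: QA for poly model structures}: the image under $c^\ast(-)^{C_2}$ of the defining square is a homotopy pullback, and pasting with the weak equivalences to the orthogonal polynomial approximations shows that the orthogonal square is also a homotopy pullback, so $c^\ast f^{C_2}$ is an $n$-polynomial fibration, hence an $n$-homogeneous fibration.

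The main subtlety is that Theorem~\ref{thm: polynomial approximation preserved} is stated only for fibrant functors in the cofree model for calculus with Reality, so one must take a small amount of care that the functors $E$, $F$, and their polynomial approximations entering the fibration criterion are (or may be replaced by) levelwise fibrant cofree functors before invoking it; since the criterion for fibrations already involves levelwise fibrant objects and homotopy pullbacks in the projective model structure, this causes no genuine difficulty, but it should be noted explicitly. I expect this bookkeeping — matching the fibrancy hypotheses of Theorem~\ref{thm: polynomial approximation preserved} with the objects appearing in the fibration criterion — to be the only real obstacle; everything else is a transcription of the argument in Proposition~\ref{prop: QA for poly model structures}, using that right Bousfield localization does not change the classes of fibrations and acyclic fibrations.
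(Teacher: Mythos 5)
Your argument has a genuine gap, rooted in a misstatement of what right Bousfield localization preserves. You assert that ``fibrations and acyclic fibrations are unchanged from the $n$-polynomial model structure.'' The fibrations are indeed unchanged, and so are the \emph{acyclic cofibrations}, but the acyclic fibrations are \emph{not}: in a right Bousfield localization the weak equivalences grow (from levelwise equivalences to $D_n$-equivalences), so the class of acyclic fibrations = fibrations $\cap$ weak equivalences grows as well, and the class of cofibrations correspondingly shrinks. Concretely, an $n$-polynomial fibration $f:E\to F$ can be a $D_n$-equivalence (hence an acyclic fibration in $\homog{n}$) without being a levelwise weak equivalence. Your step ``the acyclic fibrations of the $n$-homogeneous model structure coincide with those of the $n$-polynomial model structure, which in turn are the levelwise acyclic fibrations'' is therefore false, and the proof does not reduce to the $n$-polynomial case for acyclic fibrations.

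The missing content is precisely the extra verification the paper supplies: acyclic fibrations in $\homog{n}$ are characterized (via the calculus-with-Reality analogue of \cite[Proposition 8.3]{TaggartUnitary}) as $(n-1)$-polynomial fibrations that are also $D_n$-equivalences. Preservation of $(n-1)$-polynomial fibrations is covered by Proposition~\ref{prop: QA for poly model structures}, but one must additionally show that $c^\ast(-)^{C_2}$ sends $D_n$-equivalences to $D_n$-equivalences (between the relevant objects). This is where Theorem~\ref{thm: polynomial approximation preserved} (and the analogue of \cite[Proposition 8.2]{TaggartUnitary}, together with compatibility of the right adjoint with the right Kan extension defining the $n$-th derivative) is actually needed — your proof invokes Theorem~\ref{thm: polynomial approximation preserved} only for the fibration check, where it is already used in Proposition~\ref{prop: QA for poly model structures}, and never addresses $D_n$-equivalences at all. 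In short: the fibration half of your argument is fine and matches the paper, but the acyclic fibration half collapses to a tautology because of the incorrect claim about colocalization, and so skips the genuinely new input required for this proposition.
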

\begin{proof}
The right adjoint preserves fibrations since the fibrations of the $n$-homogeneous model structure coincide with the fibrations of the $n$-polynomial model structure, which are preserved by Proposition \ref{prop: QA for poly model structures}. By (the calculus with Reality version of)~\cite[Proposition 8.3]{TaggartUnitary} the acyclic fibrations of the $n$-homogeneous model structure are the fibrations of the $(n-1)$-polynomial model structure which are also $D_n$-equivalences. Since fibrations of the $(n-1)$-polynomial model structure are preserved, to complete the proof it suffices to prove that the right adjoint is homotopical and hence that the weak equivalences are reflected. This follows from (the calculus of Reality version of)~\cite[Proposition 8.2]{TaggartUnitary}, Theorem \ref{thm: polynomial approximation preserved}, and that right adjoints commute with right Kan extensions.
\end{proof}

Using that $D_nF$ is the universal approximation to $F$ in the underlying $\infty$-category of $n$-homogeneous functors we obtain the following relationship between $n$-homogeneous approximations, or $n$-th layers of Weiss towers. 

\begin{cor}\label{cor: layers agree}
Let $n$ be a non-negative integer. If $F$ is an fibrant functor in the cofree model for calculus with Reality, then there is a levelwise equivalence of orthogonal functors
\[
c^\ast(D_n^\mbf{R}F)^{C_2} \longrightarrow D_n^\mbf{O}(c^\ast F^{C_2}).
\]
\end{cor}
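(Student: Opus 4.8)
The plan is to deduce Corollary~\ref{cor: layers agree} from the already-established compatibility of polynomial approximations, Theorem~\ref{thm: polynomial approximation preserved}, together with the fact that the right adjoint $c^\ast(-)^{C_2}$ preserves the relevant homotopy limits. Recall that by definition $D_n^\mbf{R}F = \hofibre\bigl(T_n^\mbf{R}F \to T_{n-1}^\mbf{R}F\bigr)$ and $D_n^\mbf{O}G = \hofibre\bigl(T_n^\mbf{O}G \to T_{n-1}^\mbf{O}G\bigr)$, where the maps are induced by the inclusion $\mathds{k}^n \hookrightarrow \mathds{k}^{n+1}$. The strategy is therefore to apply $c^\ast(-)^{C_2}$ to the defining homotopy fibre sequence in calculus with Reality and identify the result with the defining homotopy fibre sequence in orthogonal calculus.

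First I would note that, since $F$ is fibrant in the cofree model structure, we may replace $C_2$-fixed points by homotopy $C_2$-fixed points throughout; in particular $c^\ast(-)^{C_2}$ applied to a levelwise fibrant functor is homotopical and, being a right derived functor, commutes with homotopy limits — in particular with homotopy fibres. Second I would apply this to the map $T_n^\mbf{R}F \to T_{n-1}^\mbf{R}F$ to obtain a levelwise weak equivalence
\[
c^\ast(D_n^\mbf{R}F)^{C_2} \longrightarrow \hofibre\bigl(c^\ast(T_n^\mbf{R}F)^{C_2} \to c^\ast(T_{n-1}^\mbf{R}F)^{C_2}\bigr).
\]
Third, I would invoke Theorem~\ref{thm: polynomial approximation preserved} twice, for degrees $n$ and $n-1$, to get a commuting ladder whose vertical maps $c^\ast(T_n^\mbf{R}F)^{C_2} \to T_n^\mbf{O}(c^\ast F^{C_2})$ and $c^\ast(T_{n-1}^\mbf{R}F)^{C_2} \to T_{n-1}^\mbf{O}(c^\ast F^{C_2})$ are levelwise weak equivalences; one must check that this ladder commutes, which amounts to the naturality of the equivalences of Theorem~\ref{thm: polynomial approximation preserved} (and of Lemma~\ref{lem: C_2-fixed points preserve powers of tau}) with respect to the map induced by $\mathds{k}^n \hookrightarrow \mathds{k}^{n+1}$. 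Taking homotopy fibres of the two rows and using that a weak equivalence of maps induces a weak equivalence on homotopy fibres then yields the claimed levelwise weak equivalence $c^\ast(D_n^\mbf{R}F)^{C_2} \to D_n^\mbf{O}(c^\ast F^{C_2})$.

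The main obstacle I anticipate is the bookkeeping around naturality: Theorem~\ref{thm: polynomial approximation preserved} is stated as the existence of a levelwise weak equivalence, but to form the ladder I need this equivalence to be natural in the relevant sense, i.e.\ compatible with the restriction maps $T_n \to T_{n-1}$. Tracing back through Lemma~\ref{lem: C_2-fixed points preserve powers of tau} and Corollary~\ref{cor: C_2-fixed points preserve tau}, this ultimately reduces to the fact that the comparison map arises from the map of topological posets $\mbf{O}_{\leq n+1} \to \mbf{R}_{\leq n+1}$ (as in the proof of Theorem~\ref{thm: polynomials preserved}), and these fit into a commuting diagram with the inclusion-induced maps $\mbf{O}_{\leq n} \hookrightarrow \mbf{O}_{\leq n+1}$; so the naturality is really a formal consequence of the construction, but it should be spelled out. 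Alternatively, and perhaps more cleanly, one can bypass explicit ladders by appealing to Proposition~\ref{prop: QA for homog model structures}: the adjunction $(i^\ast c_!, c^\ast(-)^{C_2})$ is a Quillen adjunction between $n$-homogeneous model structures, the right adjoint is homotopical, and $D_n$ is the fibrant replacement (the universal $n$-homogeneous approximation) in that model structure, so $c^\ast(-)^{C_2}$ carries the universal $n$-homogeneous approximation of $F$ to one of $c^\ast F^{C_2}$; combined with Theorem~\ref{thm: polynomial approximation preserved} to pin down the underlying functor, this gives the equivalence. I would present the homotopy-fibre argument as the main proof and remark on the model-categorical reformulation.
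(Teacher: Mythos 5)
Your proposal is essentially correct, and the \emph{alternative} argument you sketch at the end --- viewing $D_n$ as the universal $n$-homogeneous approximation and using Proposition~\ref{prop: QA for homog model structures} --- is precisely the route the paper takes: the corollary is introduced by the sentence ``Using that $D_nF$ is the universal approximation to $F$ in the $\infty$-category of $n$-homogeneous functors\ldots'' and is given no further proof. Your \emph{main} argument, via the defining homotopy fibre sequence, is a genuinely different and more hands-on route, and it is sound in outline. The trade-off is exactly what you identify: the fibre-sequence argument requires you to check (i) that $c^\ast(-)^{C_2}$ commutes with the relevant homotopy fibre, which needs the functors $T_n^\mbf{R}F$ and $T_{n-1}^\mbf{R}F$ to be (after levelwise fibrant replacement) levelwise cofree so that fixed points may be replaced by homotopy fixed points throughout, and (ii) naturality of the equivalence of Theorem~\ref{thm: polynomial approximation preserved} with respect to the restriction maps $T_n \to T_{n-1}$, which as you say reduces, via Lemma~\ref{lem: C_2-fixed points preserve powers of tau} and Corollary~\ref{cor: C_2-fixed points preserve tau}, to the compatibility of the poset maps $\mbf{O}_{\leq n+1} \to \mbf{R}_{\leq n+1}$ with the inclusions $\mbf{O}_{\leq n} \hookrightarrow \mbf{O}_{\leq n+1}$. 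The model-categorical argument buys you freedom from all this bookkeeping by transporting the universal property along the Quillen adjunction already established; the fibre-sequence argument buys you an explicit and self-contained construction of the comparison map. Either is acceptable, and the paper's one-line justification is the former.
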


\subsection{Weiss towers}

We gather together the results obtained so far to give the relationship between Weiss towers in orthogonal calculus and Weiss towers in calculus with Reality. Since the Weiss tower of a functor $F$ is a cofiltered object in the category of input functors, one can view the Weiss tower as a functor 
\[
\sf{Tow} : \Fun(\es{J}, \es{C}) \longrightarrow \sf{CoFil}(\Fun(\es{J}, \es{C}))
\]
where $\es{C}$ is either $\T$ or $C_2\T$ for our purposes.

\begin{thm}
If $F$ is a fibrant functor in the cofree model for calculus with Reality, then there is a levelwise weak equivalence between the image of the Weiss tower of $F$ in calculus with Reality under the functor $c^\ast(-)^{C_2}$ and the Weiss tower of the functor $c^\ast F^{C_2}$ in orthogonal calculus, i.e.,
\[
c^\ast \sf{Tow}^\mbf{R}(F)^{C_2} \longrightarrow \sf{Tow}^\mbf{O}(c^\ast F^{C_2}).
\]
\end{thm}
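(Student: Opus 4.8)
The plan is to upgrade the stagewise comparisons of polynomial approximations to a comparison of towers, and then observe that a levelwise equivalence of towers is exactly what is being asserted. Recall that the Weiss tower $\sf{Tow}^\mbf{R}(F)$ is the cofiltered diagram
\[
\cdots \longrightarrow T_n^\mbf{R}F \longrightarrow T_{n-1}^\mbf{R}F \longrightarrow \cdots \longrightarrow T_0^\mbf{R}F,
\]
together with the compatible coaugmentations $F \to T_n^\mbf{R}F$, where the structure maps $T_n^\mbf{R}F \to T_{n-1}^\mbf{R}F$ are induced by the inclusions $\mathds{k}^n \hookrightarrow \mathds{k}^{n+1}$. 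Applying the levelwise functor $c^\ast(-)^{C_2}$ produces a cofiltered diagram of orthogonal functors with $n$-th term $c^\ast(T_n^\mbf{R}F)^{C_2}$ and compatible maps from $c^\ast F^{C_2}$; it is this object that I must compare with $\sf{Tow}^\mbf{O}(c^\ast F^{C_2})$.

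First I would invoke Theorem~\ref{thm: polynomial approximation preserved} to obtain, for each $n$, a levelwise weak equivalence of orthogonal functors $c^\ast(T_n^\mbf{R}F)^{C_2} \to T_n^\mbf{O}(c^\ast F^{C_2})$. The real content is then to check that these maps are natural in $n$, i.e.\ that they commute with the two towers' structure maps and with the coaugmentations. Unwinding the construction, the map of Theorem~\ref{thm: polynomial approximation preserved} is the sequential homotopy colimit over $k$ of the maps of Lemma~\ref{lem: C_2-fixed points preserve powers of tau}, each of which is built (as in Corollary~\ref{cor: C_2-fixed points preserve tau} and the proof of Theorem~\ref{thm: polynomials preserved}) out of the commuting square relating $\rho_\mbf{O}$, $(\rho_\mbf{R})^{hC_2}$ and the cofinality map induced by $c\colon \mbf{O}_{\leq n+1} \to \mbf{R}_{\leq n+1}$, together with the facts that right derived functors commute with homotopy limits and that $C_2$-fixed points commute with sequential homotopy colimits. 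Each ingredient is natural with respect to the poset inclusions $\mbf{O}_{\leq n} \hookrightarrow \mbf{O}_{\leq n+1}$ and their complex analogues, so the comparison squares glue to a morphism of cofiltered diagrams $c^\ast\sf{Tow}^\mbf{R}(F)^{C_2} \to \sf{Tow}^\mbf{O}(c^\ast F^{C_2})$; compatibility with the coaugmentations is the $k=0$ instance of Lemma~\ref{lem: C_2-fixed points preserve powers of tau}, which is the identity on $c^\ast F^{C_2}$.

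Finally, a morphism in $\sf{CoFil}(\Fun(\es{J}^\mbf{O},\T))$ is a levelwise weak equivalence precisely when it is a levelwise weak equivalence of orthogonal functors at each stage of the tower; since this holds for every $n$ by Theorem~\ref{thm: polynomial approximation preserved}, the morphism just constructed is a levelwise weak equivalence, which is the assertion. I expect the main obstacle to be the bookkeeping in this naturality check: one must verify that the zig-zag produced in the proof of Theorem~\ref{thm: polynomials preserved} --- in particular the identification of the comma category $(c \downarrow V_\bb{C})$ as a topological category with a topologically terminal object --- is compatible with precomposition along the poset maps defining the tower's structure maps. This is conceptually routine, since everything in sight is induced by honest functors and natural transformations, but it is the one place where something must genuinely be said beyond citing the results already in hand.
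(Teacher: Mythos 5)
Your proposal takes essentially the same route as the paper: invoke Theorem~\ref{thm: polynomial approximation preserved} to compare the $n$-th stages $c^\ast(T_n^\mbf{R}F)^{C_2}$ and $T_n^\mbf{O}(c^\ast F^{C_2})$ and assemble these into a map of towers. The paper's proof is terser --- it also explicitly cites Corollary~\ref{cor: layers agree} to record that the homotopy fibres of consecutive stages match, and records the resulting commutative ladder of fibre sequences --- but otherwise does not spell out the naturality-in-$n$ bookkeeping you flag, so your extra care on that point is a fair reading of what a complete version of the paper's argument would require rather than a departure from it.
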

\begin{proof}
By Theorem~\ref{thm: polynomial approximation preserved}, the $n$-th term in the Weiss towers agree up to levelwise weak equivalence and by Corollary~\ref{cor: layers agree}, the homotopy fibres of the maps between the $n$-th approximations and $(n-1)$-st approximations agree up to levelwise weak equivalence, hence the towers agree up to levelwise weak equivalence. In particular, we obtain a commutative diagram
\[\begin{tikzcd}
	{c^\ast(D_n^\mbf{R}F)^{C_2}} & {D_n^\mbf{O}(c^\ast F^{C_2})} \\
	{c^\ast(T_n^\mbf{R}F)^{C_2}} & {T_n^\mbf{O}(c^\ast F^{C_2})} \\
	{c^\ast(T_{n-1}^\mbf{R}F)^{C_2}} & {T_{n-1}^\mbf{O}(c^\ast F^{C_2})}
	\arrow[from=1-1, to=1-2]
	\arrow[from=1-1, to=2-1]
	\arrow[from=2-1, to=3-1]
	\arrow[from=1-2, to=2-2]
	\arrow[from=2-2, to=3-2]
	\arrow[from=3-1, to=3-2]
	\arrow[from=2-1, to=2-2]
\end{tikzcd}\]
in which the columns are homotopy fibre sequences and the horizontal arrows are levelwise weak equivalences.
\end{proof}

\section{Higher categorical analysis}\label{section: model categories}

Model categories for functor calculus have proved a useful tool in computations. We have seen that on the point-set level orthogonal calculus is recoverable from calculus with Reality. In this section, we complete the comparison by considering the model structures which appear in the model categorical classification of homogeneous functors, or more precisely the underlying $\infty$-categories of these model categories. Figure~\ref{fig: model categories} displays the relationship between the model categories for orthogonal calculus and the model categories for calculus with Reality. In this section we explore in which sense this diagram is commutative\footnote{We say that a diagram of adjoint pairs is commutative if the respective diagram of right adjoints commutes. One can easily show that this is equivalent to the respective diagram of left adjoints commuting.} on the level of underlying $\infty$-categories.
\begin{figure}[ht]
\[\begin{tikzcd}
	{\s^\mbf{O}[C_2 \ltimes U(n)]} &&& {\s^\mbf{O}[O(n)]} \\
	{\s^\mbf{R}[U(n)]} \\
	{\Fun_{C_2 \ltimes U(n)}(\es{J}_n^\mbf{R}, (C_2 \ltimes U(n))\T)} &&& {\Fun_{O(n)}(\es{J}_n^\mbf{O}, O(n)\T)} \\
	\\
	{\homog{n}(\es{J}_0^\mbf{R}, C_2\T)} &&& {\homog{n}(\es{J}_0^\mbf{O},\T)} \\
	\\
	{\poly{n}(\es{J}_0^\mbf{R}, C_2\T)} &&& {\poly{n}(\es{J}_0^\mbf{O},\T)} \\
	\\
	{\poly{n-1}(\es{J}_0^\mbf{R}, C_2\T)} &&& {\poly{n-1}(\es{J}_0^\mbf{O},\T)} \\
	\\
	{\Fun_{C_2}(\es{J}_0^\mbf{R}, C_2\T)} &&& {\Fun(\es{J}_0^\mbf{O}, \T)} \\
	&&& {}
	\arrow["{c^\ast(-)^{C_2}}"', shift right=1, from=11-1, to=11-4]
	\arrow["{c^\ast(-)^{C_2}}"', shift right=1, from=9-1, to=9-4]
	\arrow["{c^\ast(-)^{C_2}}"', shift right=1, from=7-1, to=7-4]
	\arrow["{c^\ast(-)^{C_2}}"', shift right=1, from=5-1, to=5-4]
	\arrow["{\psi_n (-)^{C_2}}"', shift right=1, from=3-1, to=3-4]
	\arrow["{(-)^{C_2}}"', shift right=1, from=1-1, to=1-4]
	\arrow["{(C_2 \ltimes U(n))_+ \wedge_{C_2 \times O(n)}(-)}"', shift right=1, from=1-4, to=1-1]
	\arrow["{(C_2 \ltimes U(n)_+ \wedge_{C_2 \times O(n)}(L_{\psi_n}(-))}"', shift right=1, from=3-4, to=3-1]
	\arrow["{i^\ast c_!}"', shift right=1, from=5-4, to=5-1]
	\arrow["{i^\ast c_!}"', shift right=1, from=7-4, to=7-1]
	\arrow["{i^\ast c_!}"', shift right=1, from=9-4, to=9-1]
	\arrow["{i^\ast c_!}"', shift right=1, from=11-4, to=11-1]
	\arrow["{\mathds{1}}", shift left=1, from=9-1, to=11-1]
	\arrow["{\mathds{1}}"', shift right=1, from=9-1, to=7-1]
	\arrow["{\mathds{1}}"', shift right=2, from=7-1, to=5-1]
	\arrow["{\mathds{1}}"', from=5-1, to=7-1]
	\arrow["{\mathds{1}}"', shift right=1, from=7-1, to=9-1]
	\arrow["{\mathds{1}}", shift left=1, from=11-1, to=9-1]
	\arrow["{\mathds{1}}", shift left=1, from=9-4, to=11-4]
	\arrow["{\mathds{1}}"', shift right=1, from=9-4, to=7-4]
	\arrow["{\mathds{1}}"', shift right=1, from=7-4, to=5-4]
	\arrow["{\mathds{1}}"', shift right=1, from=5-4, to=7-4]
	\arrow["{\mathds{1}}"', shift right=1, from=7-4, to=9-4]
	\arrow["{\mathds{1}}", shift left=1, from=11-4, to=9-4]
	\arrow["{\ind_0^n\varepsilon^\ast}"', shift right=2, from=5-1, to=3-1]
	\arrow["{\ind_0^n\varepsilon^\ast}"', shift right=1, from=5-4, to=3-4]
	\arrow["{\res_0^n/U(n)}"', shift right=1, from=3-1, to=5-1]
	\arrow["{\res_0^n/O(n)}"', shift right=1, from=3-4, to=5-4]
	\arrow["{(\alpha_n^\mbf{R})^\ast}", shift left=1, from=2-1, to=3-1]
	\arrow["{(\alpha_n^\mbf{R})_!}", shift left=1, from=3-1, to=2-1]
	\arrow["\psi"', shift right=1, from=2-1, to=1-1]
	\arrow["{L_\psi}"', shift right=1, from=1-1, to=2-1]
	\arrow["{(\alpha_n^\mbf{O})^\ast}", shift left=1, from=1-4, to=3-4]
	\arrow["{(\alpha_n^\mbf{O})_!}", shift left=1, from=3-4, to=1-4]
	\arrow["{\circled{1}}"{description}, draw=none, from=11-1, to=9-4]
	\arrow["{\circled{2}}"{description}, draw=none, from=9-1, to=7-4]
	\arrow["{\circled{3}}"{description}, draw=none, from=7-1, to=5-4]
	\arrow["{\circled{4}}"{description}, draw=none, from=5-1, to=3-4]
	\arrow["{\circled{5}}"{description}, draw=none, from=3-1, to=1-4]
\end{tikzcd}\]
\caption{Model categories for calculus with Reality and orthogonal calculus.}
\label{fig: model categories}
\end{figure}

Observe that by the model categorical work of Barnes and Oman~\cite{BO13}, the right-hand column consists of Quillen adjunctions, and by~\cite{TaggartReality} the left-hand column also consists of Quillen adjunctions, hence both columns may be suitably derived to give adjunctions of $\infty$-categories. By Section \ref{section: input functors}, Proposition~\ref{prop: QA for poly model structures}, and Proposition~\ref{prop: QA for homog model structures}, we have shown that the lower three horizontal adjunctions induce adjunctions of $\infty$-categories, and the lower three squares ($\circled{1}-\circled{3}$) are clearly commutative. To complete our comparisons it suffices to exhibit adjunctions between the respective $\infty$-categories of spectra and the intermediate categories, and show that the remaining square ($\circled{4}$) and pentagon ($\circled{5}$) are commutative on the $\infty$-categorical level. It is essential that we use $\infty$-categories here as the remaining squares only commute after suitable (co)fibrant replacements.

\subsection{The adjunctions} For the categories of spectra, we construct an adjunction of $\infty$-categories using classical stable homotopy theory.

\begin{lem}
Let $n$ be a non-negative integer. There is a Quillen adjunction
\[
\adjunction{(C_2 \ltimes U(n))_+ \wedge_{C_2 \times O(n)} (-)}{\s^\mbf{O}[O(n)]}{\s^\mbf{O}[C_2\ltimes U(n)]}{(-)^{C_2}}.
\]
\end{lem}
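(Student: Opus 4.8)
The plan is to exhibit the adjunction concretely and then pass to underlying $\infty$-categories via the standard machinery for Quillen adjunctions. First I would fix the relevant model structures: both $\s^\mbf{O}[O(n)]$ and $\s^\mbf{O}[C_2 \ltimes U(n)]$ are the appropriate cofree (equivalently, since we are in the free/underlying setting, the transferred) model structures on categories of orthogonal spectra with a group action, where weak equivalences and fibrations are detected after forgetting the action. The functor $(C_2 \ltimes U(n))_+ \wedge_{C_2 \times O(n)} (-)$ is the classical induction functor along the inclusion of groups $C_2 \times O(n) \hookrightarrow C_2 \ltimes U(n)$ (using $O(n) \subseteq U(n)$ via realification, with $C_2$ acting on $U(n)$ by complex conjugation so that $C_2 \times O(n)$ is a genuine subgroup), applied levelwise to an orthogonal spectrum with an action of $C_2 \times O(n)$; but we are given an $O(n)$-spectrum, so implicitly we first inflate along $C_2 \times O(n) \to O(n)$, i.e., regard an $O(n)$-spectrum as a $C_2 \times O(n)$-spectrum with trivial $C_2$-action. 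Its right adjoint on the nose is the functor sending a $C_2 \ltimes U(n)$-spectrum $X$ to the fixed-point spectrum $X^{C_2}$ of the restricted $C_2 \times O(n)$-action, which carries a residual $O(n)$-action; the adjunction $(C_2 \ltimes U(n))_+ \wedge_{C_2 \times O(n)}(-) \dashv \res^{C_2 \ltimes U(n)}_{C_2\times O(n)}$ composed with the (co)free adjunction $\text{infl} \dashv (-)^{C_2}$ between $O(n)$-spectra and $C_2 \times O(n)$-spectra gives the stated adjoint pair. I would spell this composite out in one or two sentences.

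Next I would check that this is a Quillen adjunction. The left adjoint $(C_2 \ltimes U(n))_+ \wedge_{C_2 \times O(n)}(-)$ is easily seen to be left Quillen: smashing with a free (hence cofibrant) $(C_2 \ltimes U(n))$-CW complex preserves cofibrations and acyclic cofibrations levelwise, and the inflation functor $O(n)\text{-spectra} \to (C_2 \times O(n))\text{-spectra}$ is left Quillen because it preserves underlying cofibrations and underlying weak equivalences (the $C_2$-action it produces is trivial, and in the cofree model structure on $(C_2\times O(n))$-spectra all equivalences are underlying equivalences). Equivalently, one checks directly that the right adjoint $(-)^{C_2}$ preserves fibrations and acyclic fibrations; since in these model structures fibrations and acyclic fibrations are detected on underlying (non-equivariant) spectra, and $C_2$-fixed points of a cofree $C_2$-object agree with the underlying homotopy fixed points, preservation is immediate from the fact that the cofree objects are exactly the $(EC_2)_+$-local ones—this is the same mechanism already used repeatedly in Section 2, e.g.\ in the proof of Proposition~\ref{prop: free=cofree} and in Subsection~\ref{section: input functors}. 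So the pair is Quillen.

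Finally, by the conventions recalled in the introduction, a Quillen adjunction of (topological) model categories induces an adjunction $\adjunction{\mathds{L}F}{\es{M}_\infty}{\es{N}_\infty}{\mathds{R}G}$ of underlying $\infty$-categories, which gives the claim. The main obstacle I expect is purely bookkeeping rather than conceptual: namely pinning down the correct ambient model structures (cofree versus free versus genuine) on the two categories of equivariant orthogonal spectra so that $(-)^{C_2}$ really is right Quillen — one must be working with the cofree model structure (on the $C_2$-factor), on which $C_2$-fixed points and homotopy $C_2$-fixed points agree, exactly as arranged earlier in the paper. Once that is fixed, the verification that $(-)^{C_2}$ preserves (acyclic) fibrations is a one-line consequence of fibrations being underlying and of the characterisation of cofree objects as the $(EC_2)_+$-local ones, and no genuine equivariant homotopy theory (transfers, tom Dieck splitting, etc.) is needed.
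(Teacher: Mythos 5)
Your proposal is correct, and the decomposition you use (inflate from $O(n)$-spectra to $C_2\times O(n)$-spectra, then induce up to $C_2\ltimes U(n)$-spectra; with right adjoint: restrict, then take $C_2$-fixed points) is precisely what the paper's invoked ``standard equivariant stable homotopy theory argument'' unpacks to. So at the level of constructing the adjoint pair, you are in agreement. However, you and the paper place the emphasis in quite different places, and there is one piece of content in the paper's proof that your version does not surface.

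The bulk of the paper's proof is not the Quillen verification at all --- that is dismissed in one sentence as standard. What the paper actually proves is a group-theoretic identification: the normaliser of $C_2$ in $C_2\ltimes U(n)$ is
\[
N_{C_2\ltimes U(n)}(C_2) = C_2 \ltimes \{M\in U(n) : MM^T = I_n\},
\]
and a short matrix argument (if $M\in U(n)$ and $MM^T=I_n$ then $\overline{M^T}=M^{-1}=M^T$, so $M$ is real) shows this equals $C_2\times O(n)$, with Weyl group $W_{C_2\ltimes U(n)}(C_2)\cong O(n)$. The lemma is then literally the specialisation of the general $\adjunction{G_+\wedge_{N_G(H)}(-)}{\s^\mbf{O}[W_G(H)]}{\s^\mbf{O}[G]}{(-)^H}$ adjunction to $G=C_2\ltimes U(n)$, $H=C_2$. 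Your proof instead takes $C_2\times O(n)\hookrightarrow C_2\ltimes U(n)$ as a given subgroup containing $C_2$ as a direct factor, without explaining why $O(n)$ (rather than some a priori larger quotient) is the natural group acting on $X^{C_2}$. That choice happens to be correct exactly because $C_2\times O(n)$ is the full normaliser --- this is the fact you should record, otherwise the appearance of $O(n)$ in the statement reads as an unexplained convention rather than as forced by $C_2\ltimes U(n)$.

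On the other side of the ledger, your care about the model structure (that $(-)^{C_2}$ is right Quillen only for the cofree-with-respect-to-$C_2$ stable structure, where $C_2$-fixed points agree with homotopy fixed points, so that preservation of levelwise (acyclic) fibrations is immediate) is more explicit than the paper, which says only ``between stable model structures'' without flagging this. That is a genuine improvement in precision, and it is consistent with the cofree set-up established earlier in the paper. If you keep your Quillen analysis, I would still add a line computing the normaliser so the reader sees where $C_2\times O(n)$ and $O(n)$ come from.
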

\begin{proof}
By a standard equivariant stable homotopy theory argument, for a subgroup $H$ of a compact Lie group $G$, there is a Quillen adjunction
\[
\adjunction{G_+ \wedge_{N_G(H)} (-)}{\s^\mbf{O}[W_{G}(H)]}{\s^\mbf{O}[G]}{(-)^{H}},
\]
between stable model structures. It hence suffices to show that the normaliser of $C_2$ in $C_2 \ltimes U(n)$ is $C_2 \times O(n)$ and that the associated Weyl group is $O(n)$. By examining the normaliser condition one sees that the normaliser of $C_2$ is $C_2 \ltimes U(n)$ is
\[
C_2 \ltimes \{ M \in U(n) \mid  MM^T = I_n\}
\]
We claim that this is precisely $C_2 \times O(n)$. To see this, note that If $M$ is as above, then $M \in U(n)$ implies $g \cdot M^T = M^\ast = M^{-1}$ for $g \in C_2$ the non-identity element. In particular, $g \cdot M^T = M^T$, i.e., $M^T=M^\ast$ and hence $M$ must be a real matrix. It follows that $M \in O(n)$. An analogous converse argument yields the identification 
\[
C_2 \ltimes \{ M \in U(n) \mid  MM^T = I_n\} = C_2 \times O(n).
\]
The condition on the Weyl group follows as $C_2 \ltimes O(n)/C_2 \cong O(n)$. 
\end{proof}

Similarly, we obtain the following adjunction between the intermediate categories.  Denote by $\iota: O(n) \hookrightarrow C_2 \ltimes U(n)$ the canonical subgroup inclusion induced by the inclusion $O(n) \hookrightarrow U(n)$. 

\begin{lem}
Let $n$ be a non-negative integer. There is a Quillen adjunction
\[
\adjunction{{\rm{I}}_{C_2 \times O(n)}^{C_2 \ltimes U(n)}\circ L_{\psi_n}}{\Fun_{O(n)}(\es{J}_n^\mbf{O}, O(n)\T)}{\Fun_{C_2 \ltimes U(n)}(\es{J}_n^\mbf{R}, (C_2\ltimes U(n))\T)}{\psi_n(-)^{C_2}}
\]
where ${\rm{I}}_{C_2 \times O(n)}^{C_2 \ltimes U(n)}$ is short-hand notation for the functor 
\[
(C_2 \ltimes U(n))_+\wedge_{C_2 \times O(n)}(-) : \Fun_{O(n)}(\iota^\ast\es{J}_n^\mbf{R}, O(n)\T) \to \Fun_{C_2 \ltimes U(n)}(\es{J}_n^\mbf{R}, (C_2 \ltimes U(n))\T).
\]
\end{lem}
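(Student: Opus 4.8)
The plan is to follow the template of the preceding lemma: work at the level of model categories, exhibit the displayed pair as a Quillen adjunction between the $n$-stable model structures on the two intermediate categories, and then pass to underlying $\infty$-categories. Since the $n$-stable model structures present stable $\infty$-categories, the resulting adjunction of $\infty$-categories is automatically an adjunction of stable $\infty$-categories, so the only work is to produce the Quillen adjunction. The left adjoint is already presented as the composite ${\rm{I}}_{C_2 \times O(n)}^{C_2 \ltimes U(n)} \circ L_{\psi_n}$, so I would correspondingly factor the right adjoint $\psi_n(-)^{C_2}$ through the $O(n)$-equivariant functor category on the restricted real jet category $\iota^\ast \es{J}_n^\mbf{R}$:
\[
\Fun_{C_2 \ltimes U(n)}(\es{J}_n^\mbf{R}, (C_2 \ltimes U(n))\T) \xrightarrow{\ (-)^{C_2}\ } \Fun_{O(n)}(\iota^\ast\es{J}_n^\mbf{R}, O(n)\T) \xrightarrow{\ \psi_n\ } \Fun_{O(n)}(\es{J}_n^\mbf{O}, O(n)\T).
\]
It then suffices to show that each of the two factors is a Quillen adjunction between the respective $n$-stable model structures.

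For the change-of-group/fixed-points factor I would reuse the computation from the preceding lemma that $C_2$ has normaliser $C_2 \times O(n)$ and Weyl group $O(n)$ inside $C_2 \ltimes U(n)$, so that the pair $\bigl((C_2 \ltimes U(n))_+ \wedge_{C_2 \times O(n)} (-),\ (-)^{C_2}\bigr)$ is the levelwise extension of the standard change-of-group adjunction on equivariant spaces. Because the $n$-stable acyclic fibrations are exactly the levelwise acyclic fibrations and (acyclic) fibrations of equivariant spaces are detected on fixed points, the right adjoint preserves acyclic fibrations; I would then check that $C_2$-fixed points commute with the loop functors $\Omega^{\mathds{k}^n \otimes W}$ appearing in the $n\Omega$-spectrum condition (the relevant $C_2$-action on these loop coordinates being trivial after restriction along $\iota$), so that $n\Omega$-spectra are carried to $n\Omega$-spectra. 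This identifies the first factor as a Quillen adjunction.

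For the $\psi_n$ factor I would first pin down $\psi_n$ as the multiplicity $n$ analogue of Schwede's comparison functor, given on objects by a formula of the shape $\psi_n(X)(V) = \Map_\ast(S^{\bb{R}^n \otimes iV}, X(V_\bb{C}))$ echoing the $n = 1$ formula $\psi(X)(V) = \Map_\ast(S^{iV}, X(V_\bb{C}))$ of~\cite[Proposition 5.5]{TaggartReality}, with left adjoint $L_{\psi_n}$ the corresponding enriched coend. By construction $\psi_n$ preserves levelwise acyclic fibrations, and the argument of~\cite[Proposition 5.6]{TaggartReality}, carried through with the $\bb{R}^n$-twist inserted throughout, shows that $\psi_n$ sends $n\Omega$-spectra of real jet functors to $n\Omega$-spectra of orthogonal jet functors. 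Hence the second factor is a Quillen adjunction as well, and composing the two Quillen adjunctions and passing to underlying $\infty$-categories yields the statement.

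The main obstacle will be the $\psi_n$ factor --- more precisely, setting up the multiplicity $n$ comparison functor on the jet categories themselves, rather than merely on spectra where it is already available, and verifying that its right adjoint respects the $n\Omega$-spectrum condition. This is precisely the ``multiplicity $n$ versions of Schwede's comparison functors'' referred to in the introduction; the bookkeeping of the representation $\mathds{k}^n \otimes V$ together with its real/imaginary decomposition and the Thom-space morphism objects of $\es{J}_n$ is where the genuine content lies, whereas the change-of-group factor is a routine transcription of the equivariant stable homotopy theory already used in the preceding lemma.
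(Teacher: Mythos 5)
Your proposal matches the paper's proof essentially step for step: factor the adjunction through $\Fun_{O(n)}(\iota^\ast\es{J}_n^\mbf{R}, O(n)\T)$, identify $\psi_n$ as $X \mapsto \Map_\ast(S^{i(\bb{R}^n\otimes -)}, X((-)_\bb{C}))$ with an enriched coend left adjoint, and show each factor is a Quillen adjunction by checking preservation of levelwise acyclic fibrations and of $n\Omega$-spectra. The only difference is cosmetic: the paper treats the change-of-group factor as immediately a Quillen adjunction and spells out the $n\Omega$-spectrum check for $\psi_n$ explicitly via the splitting $\bb{C}^n \cong \bb{R}^n \oplus i\bb{R}^n$, whereas you defer to the $n=1$ argument of \cite[Proposition 5.6]{TaggartReality} with the $\bb{R}^n$-twist inserted.
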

\begin{proof}
The adjunction is the composite of the adjunction
\begin{equation}\label{eqn: adjoint 1}
\adjunction{L_{\psi_n}}{\Fun_{O(n)}(\es{J}_n^\mbf{O}, O(n)\T)}{\Fun_{O(n)}(\iota^\ast\es{J}_n^\mbf{R}, O(n)\T)}{\psi_n}
\end{equation}
with the adjunction
\begin{equation}\label{eqn: adjoint 2}
\adjunction{{\rm{I}}^{C_2 \ltimes U(n)}_{C_2 \times O(n)}}{\Fun_{O(n)}(\iota^\ast\es{J}_n^\mbf{R}, O(n)\T)}{\Fun_{C_2 \ltimes U(n)}(\es{J}_n^\mbf{R}, (C_2 \ltimes U(n))\T)}{(-)^{C_2}}
\end{equation}
where $\Fun_{O(n)}(\iota^\ast\es{J}_n^\mbf{R}, O(n)\T)$ is the category of $O(n)$-equivariant functors from $\es{J}_n^\mbf{R}$ to $O(n)\T$, with $O(n)$ acting on $\es{J}_n^\mbf{R}$ via restriction along the subgroup inclusion
\[
\iota: O(n)\hookrightarrow U(n) \hookrightarrow C_2\ltimes U(n).
\]
It suffices to demonstrate that each term in the composite is a Quillen adjunction, where the category $\Fun_{O(n)}(\iota^\ast\es{J}_n^\mbf{R}, O(n)\T)$ may be equipped with an $n$-stable model structure analogous to the $n$-stable model structures in any version of Weiss calculus. The right adjoint of the adjoint pair
\[
\adjunction{L_\psi}{\Fun_{O(n)}(\es{J}_n^\mbf{O}, O(n)\T)}{\Fun_{O(n)}(i^\ast\es{J}_n^\mbf{R}, O(n)\T)}{\psi_n}
\]
is given by $\psi_n(X)(V) = \Map_\ast(S^{i \R^n\otimes V}, X(V_\C))$, while the left adjoint is given by a suitable coend formula analogous to the left adjoint $L_{\psi}$ of the functor $\psi$ from calculus with Reality, see e.g.,~\cite[Proposition 5.5]{TaggartReality}.

The adjoint pair~\eqref{eqn: adjoint 2} is clearly a Quillen adjunction, hence it is left to show that the adjoint pair~\eqref{eqn: adjoint 1} is also. It is clear by definition that $\psi_n$ preserves levelwise (acyclic) fibrations, hence preserves acyclic fibrations of the $n$-stable model structures. It suffices to show that $\psi_n$ preserves fibrant objects as then it will preserve fibrations between fibrant objects. To see that $\psi_n$ preserves $n\Omega$-spectra, let $X \in \Fun_{O(n)}(\iota^\ast\es{J}_n^\mbf{R}, O(n)\T)$ be an $n\Omega$-spectrum, i.e., for every $V_\C, W_\C \in \es{J}_n^\mbf{R}$ the adjoint structure maps
\[
X(V_\C) \longrightarrow \Omega^{\C^n \otimes W_\C}X(V_\C \oplus W_\C)
\]
are weak equivalences of based spaces. We want to show that the adjoint structure maps 
\[
(\psi_n X)(V) \longrightarrow \Omega^{\R^n \otimes W}(\psi_n X)(V \oplus W)
\]
are weak homotopy equivalences for all $V,W \in \es{J}_n^\mbf{O}$. This follows from writing these structure maps as
\[
\Omega^{i(\R^n \otimes V)}X(V_\C) \longrightarrow \Omega^{\R^n \otimes W}\Omega^{i(\R^n \otimes (V \oplus W))}X(V_\C \oplus W_\C)
\]
and using that $X$ is an $n\Omega$-spectrum to write the left-hand side as 
\[
\Omega^{i(\R^n \otimes V)}\Omega^{\C^n \otimes W_\C}X(V_\C \oplus W_\C)
\]
and comparing the results by identifying $\C^n$ with $\R^n \otimes i\R^n$.
\end{proof}

\subsection{The commutativity of Figure~\ref{fig: model categories}}

We begin with the commutativity of the top pentagon ($\circled{5}$) of Figure~\ref{fig: model categories}. Recall that given a topological model category $\es{M}$, the category $\es{M}^\sf{fc}$ of fibrant-cofibrant objects defines a topological category, the nerve of which defines a quasi-category. We refer to the quasi-category $N(\es{M}^\sf{fc})$ as the \emph{underlying} $\infty$-category of $\es{M}$, and denote it by $\es{M}_\infty$. Given a Quillen adjunction (\emph{resp.} Quillen equivalence) of model categories $\adjunction{F}{\es{M}}{\es{N}}{G}$, there is a well-defined adjunction (\emph{resp.} equivalence) of underlying $\infty$-categories, $\adjunction{\mathds{L}F}{\es{M}_\infty}{\es{N}_\infty}{\mathds{R}G}$.

\begin{prop}
Let $n$ be a non-negative integer. The diagram
\[\begin{tikzcd}
	{\s^\mbf{O}[C_2 \ltimes U(n)]_\infty} && {\s^\mbf{O}[O(n)]_\infty} \\
	{\s^\mbf{R}[U(n)]_\infty} \\
	{\Fun_{C_2 \ltimes U(n)}(\es{J}_n^\mbf{R}, (C_2 \ltimes U(n))\T)_\infty} && {\Fun_{O(n)}(\es{J}_n^\mbf{O}, O(n)\T)_\infty}
	\arrow["{\mathds{R}\psi}"', shift right=1, from=2-1, to=1-1]
	\arrow["{\mathds{L}L_{\psi}}"', shift right=1, from=1-1, to=2-1]
	\arrow["{\mathds{R}(\alpha_n^\mbf{R})^\ast}", shift left=1, from=2-1, to=3-1]
	\arrow["{\mathds{L}(\alpha_n^\mbf{R})_!}", shift left=1, from=3-1, to=2-1]
	\arrow["{\mathds{R}(\alpha_n^\mbf{O})^\ast}", shift left=1, from=1-3, to=3-3]
	\arrow["{\mathds{L}(\alpha_n^\mbf{O})_!}", shift left=1, from=3-3, to=1-3]
	\arrow["{\mathds{R}(-)^{C_2}}"', shift right=1, from=1-1, to=1-3]
	\arrow["{(C_2 \ltimes U(n))_+ \wedge^\mathds{L}_{C_2 \times O(n)}(-)}"', shift right=1, from=1-3, to=1-1]
	\arrow["{\mathds{R}(\psi_n(-)^{C_2})}"', shift right=1, from=3-1, to=3-3]
	\arrow["{(C_2 \ltimes U(n))_+ \wedge^\mathds{L}_{C_2 \times O(n)}(L_{\psi_n}(-))}"', shift right=1, from=3-3, to=3-1]
\end{tikzcd}\]
is a commutative diagram of $\infty$-categories up to natural equivalence.
\end{prop}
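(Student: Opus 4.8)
The plan is to reduce the commutativity of the pentagon to a point-set comparison of the two composite functors and then to settle that comparison using the cofree model. By the convention recalled in the footnote above, a diagram of adjoint pairs commutes up to natural equivalence precisely when the induced diagram of right adjoints does (equivalently, when the diagram of left adjoints does), so it suffices to produce a natural equivalence of $\infty$-functors
\[
\mathds{R}(\alpha_n^\mbf{O})^\ast\circ\mathds{R}(-)^{C_2}\circ\mathds{R}\psi \;\simeq\; \mathds{R}\big(\psi_n(-)^{C_2}\big)\circ\mathds{R}(\alpha_n^\mbf{R})^\ast \colon \s^\mbf{R}[U(n)]_\infty \longrightarrow \Fun_{O(n)}(\es{J}_n^\mbf{O}, O(n)\T)_\infty .
\]

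The first step is to replace these derived composites by honest point-set functors. Every right Quillen functor occurring in either composite preserves fibrant objects of the relevant cofree model structures: for $(\alpha_n^\mbf{R})^\ast$, $(\alpha_n^\mbf{O})^\ast$, $\psi$ and $\psi_n$ this is built into the Quillen adjunctions established above, and for the fixed-point functors $(-)^{C_2}$ it follows from the definition of a cofree model structure — a fibrant object is a cofree $\Omega$-spectrum, the $C_2$-fixed points of a cofree $C_2$-space is again cofree, and on cofree objects $(-)^{C_2}$ agrees with homotopy fixed points and so preserves $\Omega$-spectra. Hence each derived composite is computed on fibrant-cofibrant objects by the corresponding composite of point-set functors, and the proposition reduces to exhibiting a natural weak equivalence between these two point-set composites on fibrant objects; being a natural transformation of simplicially enriched functors on the category of fibrant-cofibrant objects which is a levelwise weak equivalence, it induces the desired natural equivalence of $\infty$-functors. (Alternatively one could argue on left adjoints: these are cocontinuous composites of induction functors and Quillen equivalences, so it would be enough to compare them on the set of representable generators of $\Fun_{O(n)}(\es{J}_n^\mbf{O}, O(n)\T)_\infty$, where the computation is explicit.)

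The remaining, and main, task is the point-set identification. Unwinding the formulas for $\psi$, $\psi_n$, $(\alpha_n^\mbf{R})^\ast$ and $(\alpha_n^\mbf{O})^\ast$, one checks that the ``multiplicity $n$'' Schwede comparison transformation constructed earlier in this section intertwines the two composites: on a fibrant real spectrum $X$, both output the orthogonal functor whose value at $V$ is obtained from the $C_2$-space $X\big((\bb{R}^n\otimes V)_\bb{C}\big)$ by forming an $i(\bb{R}^n\otimes V)$-fold (equivariant) loop space and by passing to $C_2$-fixed points, with $\alpha_n$ performing the re-indexing and with the internal/external bookkeeping of the $C_2\ltimes U(n)$-action on $(\alpha_n^\mbf{R})^\ast X$ accounting for the twist. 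Concretely I would factor the comparison through the strictly commutative diagram of underlying $1$-categories and then verify that the comparison natural transformation is a levelwise weak equivalence on fibrant objects; this is a computation with equivariant mapping spaces out of the representation spheres $S^{i(\bb{R}^n\otimes V)}$, in which one uses repeatedly that the values of a fibrant real spectrum are cofree $C_2$-spaces — so that the relevant fixed points coincide with homotopy fixed points and the $\alpha_n$-reindexing may be commuted past $\psi$ and $\psi_n$ up to weak equivalence. I expect this verification to be the main obstacle; the reduction steps above, and the commutativity of the underlying $1$-categorical diagram, are routine.
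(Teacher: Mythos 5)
Your proposal follows essentially the same route as the paper: reduce to the diagram of right adjoints, observe that all of them preserve fibrant objects in the relevant cofree model structures so that the derived composites may be computed on the point‑set level, write out the two composites levelwise, and argue that they differ only in where the $C_2$-fixed points functor sits. The paper makes this explicit by recording the two formulas
\[
((\alpha_n^\mbf{O})^\ast \circ (-)^{C_2} \circ \psi)X(V) = \Map_\ast(S^{i(\bb{R}^n \otimes V)}, X(\bb{C}^n \otimes V_\bb{C}))^{C_2}
\quad\text{and}\quad
(\psi_n \circ (-)^{C_2} \circ (\alpha_n^\mbf{R})^\ast)X(V) = \Map_\ast(S^{i(\bb{R}^n \otimes V)}, X(\bb{C}^n \otimes V_\bb{C})^{C_2}),
\]
and then asserting that these agree once $X(\bb{C}^n\otimes V_\bb{C})$ is cofree (which it is for $X$ the image of a fibrant object under a chain of right derived functors). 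Your version stops short of this: you say you ``expect this verification to be the main obstacle'' and you describe both composites loosely as ``forming an $i(\bb{R}^n\otimes V)$-fold (equivariant) loop space and passing to $C_2$-fixed points,'' which blurs exactly the point that has to be proved — one composite takes $C_2$-fixed points \emph{after} forming the equivariant mapping space out of $S^{i(\bb{R}^n\otimes V)}$, the other takes fixed points \emph{before} and then maps out of a sphere with \emph{trivial} action. So relative to the paper, your write-up correctly sets up the reduction but defers the single substantive step.

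One further caution worth flagging, since it bears on the step you deferred: the sphere $S^{i(\bb{R}^n\otimes V)}$ carries the \emph{sign} $C_2$-action (complex conjugation negates $i(\bb{R}^n\otimes V)\subset(\bb{R}^n\otimes V)_\bb{C}$), and the $C_2$-action on $\Map_\ast(S^{i(\bb{R}^n\otimes V)},X(\bb{C}^n\otimes V_\bb{C}))$ must be by conjugation, acting on source and target, in order for the structure maps of $\psi X$ to be equivariant. Consequently the comparison between $\Map_\ast(S^{i(\bb{R}^n\otimes V)},Y)^{C_2}$ and $\Map_\ast(S^{i(\bb{R}^n\otimes V)},Y^{C_2})$ is \emph{not} the tautology one would get if the source had the trivial action, and ``$Y$ cofree'' does not by itself let one slide the fixed points past the mapping space, because $S^{\sigma}$ and $S^{1}$ are not equivalent as Borel $C_2$-spaces. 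So if you pursue this via the cofree reduction, you will additionally need to exploit the $\Omega$-spectrum condition on $X$ (not just levelwise cofreeness) to identify the two sides; you should not expect the equivalence to hold levelwise for an arbitrary cofree $C_2$-space in place of $X(\bb{C}^n\otimes V_\bb{C})$.

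Finally, your parenthetical alternative — comparing the left adjoints on representable generators — is a genuinely different tactic from the paper's, and plausibly cleaner precisely because it sidesteps the fixed-points-vs-loops commutation; you might find that route more robust if the point-set comparison above proves delicate.
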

\begin{proof}
Consider the corresponding diagram of model categories, and let $X$ be an object of $\s^\mbf{R}[U(n)]$. For $V$ in $\es{J}_n^\mbf{O}$, we have that
\[
((\alpha_n^\mbf{O})^\ast \circ (-)^{C_2} \circ \psi)X(V) = \Map_\ast(S^{i(\R^n \otimes V)}, X(\C^n \otimes V_\C))^{C_2}
\]
and 
\[
(\psi_n \circ (-)^{C_2} \circ (\alpha_n^\mbf{R})^\ast)X(V) = \Map_\ast(S^{i(\R^n \otimes V)}, X(\C^n \otimes V_\C)^{C_2}).
\]
If $X(\C^n \otimes V_\C)$ is fibrant in the cofree model structure, then the above two mapping spaces are naturally isomorphic, hence it suffices to exhibit that the derived image of $X$ in $\Fun(\es{J}_n^\mbf{O}, O(n)\T)_\infty$ is cofree. This follows readily from the fact that the image of $X$ in $\Fun(\es{J}_n^\mbf{O}, O(n)\T)_\infty$ is the image of $X$ under a series of right derived functors.
\end{proof}

We conclude by demonstrating that the square ($\circled{4}$) commutes on the level of underlying $\infty$-categories.

\begin{prop}
Let $n$ be a non-negative integer. The diagram
\[\begin{tikzcd}
	{\Fun_{C_2 \ltimes U(n)}(\es{J}_n^\mbf{R}, (C_2 \ltimes U(n))\T)_\infty} & {} && {\Fun_{O(n)}(\es{J}_n^\mbf{O}, O(n)\T)_\infty} \\
	{\homog{n}(\es{J}_0^\mbf{R}, C_2\T)_\infty} & {} && {\homog{n}(\es{J}_0^\mbf{O},\T)_\infty}
	\arrow["{\mathds{R}\ind_0^n\varepsilon^\ast}"', shift right=1, from=2-4, to=1-4]
	\arrow["{\mathds{L}\res_0^n/O(n)}"', shift right=1, from=1-4, to=2-4]
	\arrow["{\mathds{L}\res_0^n/U(n)}"', shift right=1, from=1-1, to=2-1]
	\arrow["{\mathds{R}\ind_0^n\varepsilon^\ast}"', shift right=1, from=2-1, to=1-1]
	\arrow["{(C_2 \ltimes U(n))_+ \wedge^\mathds{L}_{C_2 \times O(n)}(L_{\psi_n}(-))}"', shift right=1, from=1-4, to=1-1]
	\arrow["{\mathds{R}(\psi_n (-)^{C_2})}"', shift right=1, from=1-1, to=1-4]
	\arrow["{\mathds{L}(i^\ast c_!)}"', shift right=1, from=2-4, to=2-1]
	\arrow["{\mathds{R}(c^\ast(-)^{C_2})}"', shift right=1, from=2-1, to=2-4]
\end{tikzcd}\]
is a commutative diagram of $\infty$-categories, up to natural equivalence.
\end{prop}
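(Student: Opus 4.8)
The plan is to use the criterion of the footnote: the square~$\circled{4}$ of adjoint pairs commutes on underlying $\infty$-categories precisely when the associated square of right adjoints commutes up to natural equivalence, i.e.\ precisely when there is a natural equivalence
\[
\mathds{R}\bigl(\psi_n(-)^{C_2}\bigr)\circ\mathds{R}\bigl(\ind_0^n\varepsilon^\ast\bigr)\;\simeq\;\mathds{R}\bigl(\ind_0^n\varepsilon^\ast\bigr)\circ\mathds{R}\bigl(c^\ast(-)^{C_2}\bigr)
\]
of functors $\homog{n}(\es{J}_0^\mbf{R},C_2\T)_\infty\to\Fun_{O(n)}(\es{J}_n^\mbf{O},O(n)\T)_\infty$. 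I would produce this equivalence by computing both composites on a strict model.

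First I would fix an object $F$ that is fibrant in the $n$-homogeneous model structure $\homog{n}(\es{J}_0^\mbf{R},C_2\T^\sf{cofree})$; such objects represent the objects of the source $\infty$-category, and only fibrancy is needed to compute the right-derived functors in play. Being fibrant here, $F$ is levelwise fibrant in $C_2\T^\sf{cofree}$, hence levelwise cofree, so that $C_2$-fixed points and homotopy $C_2$-fixed points agree on each value of $F$. The key reduction is that neither composite needs an intervening fibrant replacement: as shown within the proof of Theorem~\ref{thm: differentiation invariant under QE}, $\ind_0^n\varepsilon^\ast F$ is an $n\Omega$-spectrum in the cofree $n$-th intermediate category, so $\psi_n(\ind_0^n\varepsilon^\ast F)^{C_2}$ computes the left-hand composite at $F$; and by Theorem~\ref{thm: polynomials preserved}, $c^\ast F^{C_2}$ is a fibrant $n$-polynomial orthogonal functor, hence fibrant in $\homog{n}(\es{J}_0^\mbf{O},\T)$, so $\ind_0^n\varepsilon^\ast(c^\ast F^{C_2})$ computes the right-hand composite at $F$. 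The comparison therefore reduces to producing a natural levelwise weak equivalence of $O(n)$-equivariant functors on $\es{J}_n^\mbf{O}$.

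Next I would unravel both functors levelwise. For $V\in\es{J}_n^\mbf{O}$ the functor $\ind_0^n$ is the enriched right Kan extension along $i_0^n\colon\es{J}_0\to\es{J}_n$, hence an end over $\es{J}_0$ of cotensors by the Thom-space morphism objects of $\es{J}_n$; $\varepsilon^\ast$ inflates along the trivial action; $(-)^{C_2}$ is an equalizer; $c^\ast$ is restriction; and $\psi_n(X)(V)=\Map_\ast(S^{i(\bb{R}^n\otimes V)},X(V_\bb{C}))$ is a cotensor. All of these commute with ends and cotensors, so after interchanging limits both composites become ends built from the \emph{same} functor $F$: one indexed by $\es{J}_0^\mbf{R}$ and twisted by the morphism objects of $\es{J}_n^\mbf{R}$ and by $S^{i(\bb{R}^n\otimes V)}$, the other indexed by $\es{J}_0^\mbf{O}$ and twisted by the morphism objects of $\es{J}_n^\mbf{O}$; since $F$ is fibrant, both compute the corresponding homotopy ends. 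What remains is to show that complexification identifies these two homotopy ends, which I would handle exactly as in the proof of Theorem~\ref{thm: polynomials preserved}: running the comma-category argument (via the dual of \cite[Lemma A.5]{Li09}) now for the jet categories, whose morphism objects are Thom spaces of complement bundles rather than Stiefel manifolds, to see that the complexification functors $c\colon\es{J}_0^\mbf{O}\to\es{J}_0^\mbf{R}$ and $c\colon\es{J}_n^\mbf{O}\to\es{J}_n^\mbf{R}$ are homotopy (co)final; and computing that the $C_2$-fixed points of the complexified complement bundle $\bb{C}^n\otimes f(V_\bb{C})^\perp$ over a real isometry $f=c(g)$ are $\bb{R}^n\otimes g(V)^\perp$, so that, together with the identification $\bb{C}^n\cong\bb{R}^n\otimes_\bb{R}(\bb{R}\oplus i\bb{R})$, passing to $C_2$-fixed points matches the $S^{i(\bb{R}^n\otimes V)}$-twist appearing in $\psi_n$ and identifies $\es{J}_n^\mbf{R}(V_\bb{C},-)^{C_2}$ with $\es{J}_n^\mbf{O}(V,-)$. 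The resulting comparison maps are natural in $F$, since every construction is functorial and the identifications respect the morphisms of the jet categories.

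The main obstacle is this last step: one must re-run the comma-category computation of Theorem~\ref{thm: polynomials preserved} in the setting of the jet categories $\es{J}_n$ and, simultaneously, keep careful track of the $C_2$-action on the complexified complement bundles and of the loop/suspension coordinates, so that taking $C_2$-fixed points of the $\es{J}_n^\mbf{R}$-indexed homotopy end returns, up to weak equivalence, the $\es{J}_n^\mbf{O}$-indexed one with its $O(n)$-action and the correct real suspension coordinates; the explicit sphere $S^{i(\bb{R}^n\otimes V)}$ in $\psi_n$ is precisely the bookkeeping device that makes this work. A minor but genuine point is that the fibrancy of $F$ must be exploited so that both sides are genuine homotopy ends on which the homotopy-invariance arguments apply, with no intervening fibrant replacement required — and this is exactly what Theorems~\ref{thm: differentiation invariant under QE} and~\ref{thm: polynomials preserved} guarantee.
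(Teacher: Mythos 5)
Your strategy diverges substantially from the paper's, and the divergence is where the gap lies. The paper does \emph{not} unravel the derivatives as ends and compare them by a cofinality argument. Instead it embeds the square~$\circled{4}$ into the larger diagram that includes the spectrum categories $\s^\mbf{R}[U(n)]$ and $\s^\mbf{O}[C_2\ltimes U(n)]$, reduces (using that the vertical adjunctions are $\infty$-categorical equivalences) to a single natural equivalence of composites, and then evaluates both composites on a bifibrant $\Theta$ using the classification theorems (\cite[Theorem 7.1]{TaggartReality} on one side, \cite[Theorem 7.3]{We95} and \cite[Theorem 10.3]{BO13} on the other). This turns both sides into the explicit functor $V \mapsto \Omega^\infty\bigl[(S^{\bb{C}^n\otimes_\bb{C}V_\bb{C}}\wedge\Theta)_{hU(n)}\bigr]^{C_2}$ versus $V \mapsto \Omega^\infty\bigl[(S^{\bb{R}^n\otimes_\bb{R}V}\wedge\Theta^{C_2})_{hO(n)}\bigr]$, and the remaining work is a direct comparison of homotopy orbits and $C_2$-fixed points. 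None of the end/cotensor manipulation or the comma-category machinery appears.

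The concrete gap in your approach is the sentence where you plan to ``run the comma-category argument (via the dual of~\cite[Lemma A.5]{Li09}) now for the jet categories.'' The argument in Theorem~\ref{thm: polynomials preserved} hinges on $\mbf{O}_{\leq n+1}$ and $\mbf{R}_{\leq n+1}$ being \emph{topological posets}, so that comma categories $(c\downarrow V_\bb{C})$ have genuine (topologically) terminal objects and one can invoke cofinality for homotopy limits directly. The enriched ends defining $\ind_0^n$ are instead weighted by the morphism objects of $\es{J}_n$, which are Thom spaces of complement bundles, and the indexing categories $\es{J}_0^\mbf{R}$ and $\es{J}_0^\mbf{O}$ have Stiefel manifolds (with disjoint basepoints) as morphism spaces; neither is a poset and neither admits the terminal-object argument as stated. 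So ``handle exactly as in Theorem~\ref{thm: polynomials preserved}'' does not transfer: what you actually need is a statement about enriched homotopy ends along a change of enriching/indexing base that keeps track of the $C_2$-action on $\es{J}_n^\mbf{R}(V_\bb{C},W_\bb{C})$ and recovers the extra $S^{i(\bb{R}^n\otimes V)}$ twist. That is precisely the delicate part, and asserting it ``works exactly as before'' is not a proof. Your observations that $\ind_0^n\varepsilon^\ast F$ is already an $n\Omega$-spectrum and that fibrancy of $F$ lets you avoid fibrant replacement are correct and useful, but they do not bridge this gap.

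If you want to stay close to your strategy you would essentially be re-proving a piece of the classification theorem; it is cleaner, and is what the paper does, to invoke the classification theorem directly, reduce to the explicit formulas for the layers, and then carry out the elementary calculation that $C_2$-fixed points commute with $\Omega^\infty$, distribute over the diagonal $C_2$-action on $(S^{\bb{C}^n\otimes_\bb{C}V_\bb{C}}\wedge\Theta)\wedge_{U(n)}EU(n)_+$, and identify $(S^{\bb{C}^n\otimes_\bb{C}V_\bb{C}})^{C_2}=S^{\bb{R}^n\otimes_\bb{R}V}$, $U(n)^{C_2}=O(n)$, $(EU(n)_+)^{C_2}\simeq EO(n)_+$.
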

\begin{proof}
It suffices to show that that the diagram
\[\begin{tikzcd}
	{\s^\mbf{O}[C_2 \ltimes U(n)]_\infty} && {} & {\s^\mbf{O}[O(n)]_\infty} \\
	{\s^\mbf{R}[U(n)]_\infty} \\
	{\Fun_{C_2 \ltimes U(n)}(\es{J}_n^\mbf{R}, (C_2 \ltimes U(n))\T)_\infty} & {} && {\Fun_{O(n)}(\es{J}_n^\mbf{O}, O(n)\T)_\infty} \\
	{\homog{n}(\es{J}_0^\mbf{R}, C_2\T)_\infty} && {} & {\homog{n}(\es{J}_0^\mbf{O},\T)_\infty} \\
	\arrow["{\mathds{R}\psi}"', shift right=1, from=2-1, to=1-1]
	\arrow["{\mathds{L}L_{\psi}}"', shift right=1, from=1-1, to=2-1]
	\arrow["{\mathds{R}\ind_0^n\varepsilon^\ast}"', shift right=1, from=4-1, to=3-1]
	\arrow["{\mathds{L}\res_0^n/U(n)}"', shift right=1, from=3-1, to=4-1]
	\arrow["{\mathds{R}(\alpha_n^\mbf{R})^\ast}", shift left=1, from=2-1, to=3-1]
	\arrow["{\mathds{L}(\alpha_n^\mbf{R})_!}", shift left=1, from=3-1, to=2-1]
	\arrow["{\mathds{R}(c^\ast(-)^{C_2})}"', shift right=1, from=4-1, to=4-4]
	\arrow["{\mathds{L}(i^\ast c_!)}"', shift right=1, from=4-4, to=4-1]
	\arrow["{\mathds{R}(-)^{C_2}}"', shift right=1, from=1-1, to=1-4]
	\arrow["{(C_2 \ltimes U(n))_+ \wedge^\mathds{L}_{C_2 \times O(n)}(-)}"', shift right=1, from=1-4, to=1-1]
	\arrow["{\mathds{L}(\alpha_n^\mbf{O})_!}", shift left=1, from=3-4, to=1-4]
	\arrow["{\mathds{R}(\alpha_n^\mbf{O})^\ast}", shift left=1, from=1-4, to=3-4]
	\arrow["{\mathds{L}\res_0^n/O(n)}"', shift right=1, from=3-4, to=4-4]
	\arrow["{\mathds{R}\ind_0^n\varepsilon^\ast}"', shift right=1, from=4-4, to=3-4]
\end{tikzcd}\]
commutes, by which we mean that there is a natural equivalence of composites
\[
\mathds{L}(\alpha_n)_! \circ \mathds{R}\ind_0^n\varepsilon^\ast \circ \mathds{R}(c^\ast(-)^{C_2}) \simeq \mathds{R}(-)^{C_2} \circ \mathds{R}\psi \circ \mathds{L}(\xi_n)_! \circ \mathds{R}(\ind_0^n\varepsilon^\ast).
\]
By an adjoint argument using that the vertical adjunctions are equivalences of $\infty$-categories, it suffices to exhibit a natural equivalence
\[
\mathds{R}(c^\ast(-)^{C_2}) \circ \mathds{L}\res_0^n/U(n) \circ \mathds{R}(\xi_n)^\ast \circ \mathds{L} L_\psi \simeq \mathds{L}\res_0^n/O(n) \circ \mathds{R}(\alpha_n)^\ast \circ \mathds{R}(-)^{C_2}. 
\]
Let $\Theta$ be a bifibrant object in the model category of  spectra with an action of $C_2 \ltimes U(n)$. By the classification theorem for calculus with Reality, see~\cite[Theorem 7.1]{TaggartReality}, the left-hand side applied to $\Theta$ is naturally levelwise weakly equivalent to the functor
\[
V \longmapsto (\Omega^\infty [(S^{\C^n \otimes_\C V_\C} \wedge \Theta)_{hU(n)}])^{C_2},
\]
whereas by the classification theorem for orthogonal calculus, see~\cite[Theorem 7.3]{We95} and \cite[Theorem 10.3]{BO13}, the right-hand side is naturally levelwise weakly equivalent to the functors
\[
V \longmapsto \Omega^\infty [(S^{\R^n \otimes_\R V} \wedge \Theta^{C_2})_{hO(n)}],
\]
hence it suffices to exhibit a natural isomorphism between these functors. First note that $C_2$-fixed points commutes with $\Omega^\infty$, hence there is a natural equivalence
\[
(\Omega^\infty [(S^{\C^n \otimes_\C V_\C} \wedge \Theta)_{hU(n)}])^{C_2} \simeq \Omega^\infty [(S^{\C^n \otimes_\C V_\C} \wedge \Theta)_{hU(n)}]^{C_2},
\]
which by definition of the homotopy orbits we may rewrite as
\[
(\Omega^\infty [(S^{\C^n \otimes_\C V_\C} \wedge \Theta) \wedge_{U(n)} EU(n)_+])^{C_2} \simeq \Omega^\infty [(S^{\C^n \otimes_\C V_\C} \wedge \Theta) \wedge_{U(n)} EU(n)_+]^{C_2}.
\]
Since $C_2$ acts diagonally on the right-hand side, there is a natural isomorphism
\[
[(S^{\C^n \otimes_\C V_\C} \wedge \Theta) \wedge_{U(n)} EU(n)_+]^{C_2} \simeq ((S^{\C^n \otimes_\C V_\C})^{C_2} \wedge \Theta^{C_2}) \wedge_{U(n)^{C_2}} (EU(n)_+)^{C_2},
\]
which by calculation is naturally isomorphic to 
\[
(S^{\R^n \otimes_\R V} \wedge \Theta^{C_2}) \wedge_{O(n)} EO(n)_+ = (S^{\R^n \otimes_\R V} \wedge \Theta^{C_2})_{hO(n)},
\]
hence the result follows.
\end{proof}

\bibliographystyle{alpha}
\bibliography{references.bib}
\end{document}